\documentclass[pdflatex,sn-mathphys-num]{sn-jnl}

\usepackage{graphicx}%
\usepackage{multirow}%
\usepackage{amsmath,amssymb,amsfonts}%
\usepackage{amsthm}%
\usepackage{mathrsfs}%
\usepackage[title]{appendix}%
\usepackage{xcolor}%
\usepackage{textcomp}%
\usepackage{manyfoot}%
\usepackage{booktabs}%
\usepackage{algorithm}%
\usepackage{algorithmicx}%
\usepackage{algpseudocode}%
\usepackage{listings}%

\usepackage{booktabs}  

\usepackage{amscd,mathtools}
\usepackage{upgreek,bm}
\usepackage{dsfont}
\usepackage{verbatim}
\usepackage{color}
\usepackage{url}
\usepackage{enumerate,enumitem}
\usepackage{epstopdf,epsfig,subfigure}
\usepackage{curve2e}
\usepackage{array}
\usepackage{stackrel}
\usepackage{geometry}
\usepackage{todonotes}
\usepackage[dvipsnames]{xcolor}
\usepackage[normalem]{ulem}

\usepackage{pgfplots}
\pgfplotsset{compat=newest}
\usepgfplotslibrary{groupplots}
\usepackage{tikz}
\definecolor{myblue}{RGB}{70,130,180}
\definecolor{myred}{RGB}{178,34,34}
\definecolor{mygray}{RGB}{140,140,140}

\theoremstyle{thmstyleone}%
\newtheorem{theorem}{Theorem}
\newtheorem{proposition}[theorem]{Proposition}%

\theoremstyle{thmstyletwo}%
\newtheorem{remark}{Remark}%
\newtheorem{assumption}[theorem]{Assumption}%
\newtheorem{lemma}[theorem]{Lemma}%
\newtheorem{corollary}[theorem]{Corollary}%

\theoremstyle{thmstylethree}%
\newtheorem{definition}{Definition}%

\newcommand{\clo}{{\scriptstyle\mathcal{O}}}

\newcommand{\Tr}{\mathop{\rm Tr}\nolimits}
\newcommand{\rest}{\left.\kern-2\nulldelimiterspace\right|_}
\newcommand{\1}{\mathds{1}}

\newcommand{\clC}{{\mathcal C}}

\newcommand{\clL}{{\mathcal L}}

\newcommand{\clN}{{\mathcal N}}

\newcommand{\inp}[3][]{\left\langle #2,#3\right\rangle_{#1}}

\begin{document}

\title[Article Title]{Finite-Dimensional Feedback Stabilization of Nonautonomous Stochastic Parabolic Equations}


\author[1]{\fnm{Behzad} \sur{Azmi}}\email{behzad.azmi@uni-konstanz.de}

\author*[1]{\fnm{Jonas} \spfx{von der} \sur{Heydt}} \email{jonas.von-der-heydt@uni-konstanz.de}

\author[2]{\fnm{S\'ergio} \sur{Rodrigues}}\email{ssi.rodrigues@fct.unl.pt}

\affil[1]{\orgdiv{Dep. Math. Stat.}, \orgname{Univ.
Konstanz}, \orgaddress{\street{Universitätstr. 10}, \city{Konstanz}, \postcode{D-78457}, 
\country{Germany}}}

\affil[2]{\orgdiv{Dep. Math.}, \orgname{NOVA University Lisbon}, \orgaddress{\street{Campus de Caparica},
\city{Caparica}, \postcode{2829-516}, \country{Portugal}}}

\abstract{%
    \unboldmath
    We investigate finite-dimensional feedback stabilization for nonlinear nonautonomous stochastic parabolic equations driven by $Q$-Wiener, covering both additive and multiplicative perturbations. The control is given by a finite linear combination of localized indicator-type actuators whose supports are selected as part of the construction and may have arbitrarily small total measure. The feedback law is constructed by means of oblique projections onto suitable finite-dimensional subspaces. Within the variational Gelfand triple framework, we prove well-posedness of the closed-loop system under standard coercivity, growth, and global Lipschitz assumptions. By appropriately choosing the actuator configuration and feedback strength, we establish exponential mean-square stabilization of the stochastic dynamics and, for pure multiplicative noise, almost-sure stabilization. A fully discrete three-layer implementation complements the theoretical results. Numerical experiments illustrate the influence of number of actuators, noise intensity, and nonlinear effects on the closed-loop stabilization behavior.
}

\keywords{Stochastic partial differential equations, Finite-Dimensional control, Oblique projection, Exponential stabilization}

\pacs[MSC Classification]{60H15, 35R60, 93D15, 93C20, 65C30}

\maketitle


\section{Introduction}\label{sec:introduction}

In this paper, we study the feedback stabilization of stochastic semilinear parabolic equations by means of finitely many spatially supported actuators. As a model problem, we consider systems of the form
\begin{subequations}\label{eq:sys-X}
\begin{align}
    \mathrm d X
    +\bigl(-\nu\Delta X +&a(t,x)\,X + b(t,x)\cdot\nabla X + f(X)\bigr)\,\mathrm dt
    \notag\\
    &=
    g(t,x,X)\,\mathrm dW_t
    + \sum_{i=1}^{N} u_i(t)\,\1_{\clo_i}(x)\,\mathrm dt,
    \label{eq:sys-X-pde}
    \\
   & \mathfrak T X\rest{\partial\mathcal O}=0,
    \qquad
     X\rest{t=0} = X_0 .
    \label{eq:sys-X-bic}
\end{align}
\end{subequations}
Here, $X=X(t,x)$ denotes the state, a real-valued stochastic process defined for $(t,x)\in[0,\infty)\times\mathcal O$ on a complete filtered probability space $(\Omega,\mathcal F,(\mathcal F_t)_{t\geq0},\mathbb P)$ satisfying the usual conditions. The domain $\mathcal O\subset\mathbb R^d$, $d\geq1$, is bounded, convex, and polygonal or polyhedral, and the boundary operator $\mathfrak T$ represents either homogeneous Dirichlet or homogeneous Neumann boundary conditions. The coefficient $\nu>0$ is fixed, while $a$ and $b$ are deterministic reaction and convection coefficients, and $f$ is a nonlinear reaction term.

The stochastic forcing in~\eqref{eq:sys-X-pde} is driven by a $Q$-Wiener process and allows for both additive and multiplicative noise. The additive component leads to a residual term in the mean-square stabilization estimate, whereas in the purely multiplicative case the closed-loop dynamics decays to zero. The precise assumptions on the covariance operator, the stochastic basis, and the diffusion coefficient are stated in Section~\ref{sec:framework-and-well-posedness}.

The control in~\eqref{eq:sys-X-pde} is finite-dimensional and acts through indicator-type actuators. More precisely, the control term is given by
\begin{equation}\label{eq:intro-control-term}
    \sum_{i=1}^{N} u_i(t)\,\1_{\clo_i}(x)
    \coloneqq
    \sum_{i=1}^{N} [KX(t)]_i\,\1_{\clo_i}(x),
\end{equation}
where $\1_{\clo_i}$ denotes the indicator function of the actuator support $\clo_i\subset\mathcal O$. The sets $\clo_1,\ldots,\clo_N$ are pairwise disjoint and are selected as part of the construction; in particular, their total measure can be prescribed arbitrarily small. The feedback operator $K=K_L^{[\lambda]}$ is constructed explicitly by means of oblique projections. Its construction depends on the actuator supports and on the integer parameter $N$, but not on the coefficients $a$, $b$, $f$, or $g$.

Although \eqref{eq:sys-X} serves as the guiding model problem throughout the introduction, the theoretical analysis is carried out for the more general abstract closed-loop equation~\eqref{eq:abstract-spde} introduced in Section~\ref{sec:framework-and-well-posedness}. The main well-posedness and stabilization results stated below apply at that level of generality.

The main result shows that, under the standing assumptions, one can construct an oblique-projection feedback that stabilizes the closed-loop system at any prescribed exponential rate. More precisely, for every $\mu>0$, there exist a suitable actuator configuration, a feedback parameter $\lambda>0$, and the corresponding feedback operator $K=K_L^{[\lambda]}$ such that
\begin{equation}\label{eq:intro-goal-feed-Exp}
    \mathbb E\|X(t)\|_{L^2(\mathcal O)}^2
    \leq
    e^{-2\mu t}\,\mathbb E\|X_0\|_{L^2(\mathcal O)}^2
    + C_{\mathrm{add}},
    \qquad t\geq0.
\end{equation}
Here $C_{\mathrm{add}}\geq0$ depends on the additive component of the noise. Thus, in the presence of additive noise, the feedback stabilizes the system up to a noise-dependent residual bound. If the additive component is absent, then $C_{\mathrm{add}}=0$, and the estimate yields exponential mean-square decay to zero. In the purely multiplicative case, we also prove the pathwise estimate
\begin{equation}\label{eq:intro-goal-feed-P}
    \limsup_{t\to\infty}
    \frac{1}{t}\log \|X(t)\|_{L^2(\mathcal O)}^2
    \leq -2\mu,
    \qquad \mathbb P\text{-a.s.}
\end{equation}
Hence the closed-loop system is exponentially stable almost surely. The precise statement is given in Theorem~\ref{thm:main} in Section~\ref{subsec:exponential-stability}.

The paper also contains a numerical study of the proposed feedback law. In a concrete two-dimensional setting, we describe a fully discrete three-layer implementation of the closed-loop system and use it to investigate the stabilization behavior under different actuator configurations, noise structures, noise intensities, and nonlinear dynamics. We also derive a mean-square Karhunen--Lo\`eve truncation error estimate for the covariance operators $Q = (-\Delta_{\mathrm{Neu}} + \ell^{-2})^{-\alpha}$, which provides a rigorous justification for the approximation of the stochastic forcing used in the simulations. The simulations illustrate the effectiveness of the feedback, confirm numerically the predicted truncation rate, and indicate that the associated state error decays even faster in the reported examples.

\paragraph{Related work.}
The stabilization of stochastic parabolic equations has been studied in several directions, but finite-dimensional actuator-based stabilization remains much less developed than in the deterministic setting. Early stabilization results for stochastic heat equations with multiplicative noise were obtained in~\cite{barbu2002note}. A particularly important reference for the present work is~\cite{Barbu13}, where internal stabilization of stochastic parabolic equations with linearly multiplicative Gaussian noise was established by means of a linear feedback localized in an interior subdomain.

By contrast, finite-dimensional stabilization is much better understood for deterministic systems. Internal exponential stabilization of the three-dimensional Navier--Stokes equations to a nonstationary solution was proved in~\cite{Barbu2011internal} by means of truncated observability properties. Related finite-dimensional stabilization results for deterministic parabolic systems were obtained in~\cite{Kroener2015,Azmi2022}. The feedback construction used here is inspired in particular by the deterministic oblique-projection framework developed in~\cite{KunRod19-cocv,Rod21-aut}; see also~\cite{KunRodWal21,azmi2019oblique} for related semilinear parabolic and semilinear damped wave-like equations, and~\cite{azmi2024rhc} for a random linear parabolic setting.

For stochastic parabolic equations, the corresponding finite-dimensional theory is less direct. As noted in~\cite{Barbu13}, an analogue based on finitely many actuators would require stochastic counterparts of truncated observability inequalities, whose derivation is substantially more delicate than in the deterministic case. More generally, the controllability theory for stochastic parabolic equations does not parallel the deterministic theory in a straightforward way; see, for instance,~\cite{Lu2022Zhang}. The present paper contributes to this line of work by constructing an explicit actuator-based feedback for nonautonomous stochastic parabolic equations driven by a $Q$-Wiener process.

Compared with~\cite{Barbu13}, the feedback constructed here is finite-dimensional and obtained by an oblique-projection argument. The actuator supports are selected as part of the construction, and their total measure can be prescribed arbitrarily small. In addition, the present analysis covers both additive and multiplicative noise and yields a prescribed exponential decay rate within the stated assumptions. A recent preprint~\cite{hernandez2026stability} studies finite-dimensional feedback stabilization for a class of stochastic heat equations with purely multiplicative noise driven by a standard one-dimensional Brownian motion. That work is close in spirit to the present one, but the stochastic setting and the feedback construction are substantially different.

Feedback stabilization is also closely related to stochastic linear-quadratic control and Riccati theory. In that framework, stabilizing feedbacks are typically obtained from operator-valued Riccati equations or backward stochastic Riccati equations; see, for instance,~\cite{GuatteriTessitore2015,LuZhang2024,LuWang2023} and the references therein. For recent work on numerical discretizations of the associated Riccati equations, see~\cite{ProhlWang2024}. In contrast, the feedback in the present paper is not derived from an optimal control problem, but is constructed explicitly by means of oblique projections. In the linear case, the result provides stabilizability by finitely many indicator-type actuators, a property that is relevant in infinite-horizon Riccati theory for the existence of stabilizing Riccati solutions with such controls.

For the well-posedness analysis, we use the variational formulation and existence theory for stochastic partial differential equations from~\cite{Liu06}; see also the classical works~\cite{pardoux1975,krylov_rozovskii1981} and the systematic presentation in~\cite{liu_roeckner2015}. This framework allows us to formulate the feedback-controlled system in variational form and to establish well-posedness of the abstract closed-loop equation in Section~\ref{sec:framework-and-well-posedness}. The subsequent stabilization analysis must then control both the unstable deterministic modes and the additional terms generated by the It\^o correction and the quadratic variation of the noise.

\paragraph{Contributions.}
The contributions of the present work are as follows.
\begin{itemize}
    \item We construct an explicit oblique-projection feedback for nonautonomous stochastic parabolic equations driven by a $Q$-Wiener process and allowing for both additive and multiplicative noise. The resulting control acts through finitely many indicator-type actuators whose supports are selected as part of the construction and whose total measure can be arbitrarily small.\\We also indicate how the same oblique-projection mechanism can be interpreted as an observer injection for stochastic state estimation, or continuous data assimilation, from localized noisy measurements (see Sect.~\ref{subsec:observer-interpretation}).
    \item We establish well-posedness of the abstract closed-loop system in the variational framework and prove exponential stabilization in the mean-square sense. In the purely multiplicative case, we also prove pathwise exponential stabilization.
    \item On the computational side, in a concrete two-dimensional setting, we present a fully discrete three-layer implementation of the closed-loop system and formulate the corresponding algorithm. We also derive a mean-square Karhunen--Lo\`eve truncation error estimate for the covariance operators $Q = (-\Delta_{\mathrm{Neu}} + \ell^{-2})^{-\alpha}$.
    \item We complement the theoretical analysis by numerical simulations that illustrate the performance of the feedback for different numbers of actuators, noise structures, noise intensities, and nonlinear dynamics. The simulations confirm numerically the predicted truncation rate and indicate that the associated state error decays even faster in the reported examples.
\end{itemize}

\paragraph{Organization.}
In Section~\ref{sec:framework-and-well-posedness} we introduce the variational Gelfand triple framework, formulate the abstract closed-loop equation~\eqref{eq:abstract-spde}, state the standing assumptions, verify them for the model system~\eqref{eq:sys-X}, and establish global well-posedness together with the It\^o formulas used in the stability analysis. In Section~\ref{sec:feed-stab} we construct the oblique-projection feedback operator $K_L^{[\lambda]}$ and prove the main stabilization result. Section~\ref{sec:numerical-scheme} introduces the fully discrete numerical scheme and the Karhunen--Lo\`eve truncation analysis. Numerical experiments are presented in Section~\ref{sec:simulations}.

\section{Framework and Well-Posedness}\label{sec:framework-and-well-posedness}

This section introduces the variational framework used throughout the paper. After specifying the functional setting and the noise, we cast \eqref{eq:sys-X} in abstract form, verify the required assumptions for the concrete equation, establish global well-posedness, and derive the It\^o formulas needed in Section~\ref{sec:feed-stab}.

\subsection{Functional Setting}\label{subsec:functional-setting}

We begin with the concrete class of stochastic parabolic equations introduced in Section~\ref{sec:introduction}. Let $\mathcal O\subset\mathbb R^d$, $d\in\mathbb N_+$, be a bounded convex polygonal or polyhedral domain with boundary $\partial\mathcal O$, and let $\mathfrak T$ denote either the homogeneous Dirichlet trace operator $\Tr$ or the homogeneous Neumann trace operator $\partial_n$. We work throughout in the Gelfand triple
\begin{equation}\label{eq:gelfand-triple}
    V \xhookrightarrow{d,c} H \xhookrightarrow{d} V',
\end{equation}
where $H:=L^2(\mathcal O)$ is the pivot space, identified with its dual $H'\equiv H$, and
\[
    V:=H^1_{\mathfrak T}(\mathcal O)
    :=
    \begin{cases}
        H^1_0(\mathcal O), & \mathfrak T=\Tr,\\
        H^1(\mathcal O),   & \mathfrak T=\partial_n.
    \end{cases}
\]
We endow $V$ with the norm $\|u\|_V^2:=\|u\|_H^2+\|\nabla u\|_H^2$. Since $\mathcal O$ is bounded and Lipschitz, the embedding $V\hookrightarrow H$ is dense and compact, and the embedding $H\hookrightarrow V'$ is the canonical dense injection. Moreover,
\begin{equation}\label{eq:embed-contractive}
    \|u\|_H\leq \|u\|_V \quad \forall\,u\in V,
    \qquad
    \|h\|_{V'}\leq \|h\|_H \quad \forall\,h\in H.
\end{equation}
The duality pairing $\langle \cdot,\cdot\rangle_{V',V}$ extends the $H$-inner product, that is,
\[
    \langle h,v\rangle_{V',V}=\langle h,v\rangle_H
    \qquad \forall\,h\in H,\ \forall\,v\in V.
\]
For the model~\eqref{eq:sys-X}, we assume that $\nu>0$ is constant and
that the reaction and convection coefficients $a$ and $b$ are
deterministic and essentially bounded, with
\begin{equation}\label{eq:bdd-ab}
    a\in L^\infty((0,\infty)\times\mathcal O),
    \qquad
    b\in L^\infty((0,\infty)\times\mathcal O)^d .
\end{equation}
Finally, the nonlinearity $f\colon H\to H$ is assumed to be globally Lipschitz continuous. We further consider a   stochastic diffusion coefficient of the form $g(t,x,r)=\xi(t,x)+z(r)$, where $z\colon\mathbb R\to\mathbb R$ is globally Lipschitz continuous and $\xi$ is deterministic with
\begin{equation}\label{eq:bdd-xi}
    \xi \in L^\infty((0,\infty)\times\mathcal O).
\end{equation}

\subsection{Probability Space and Noise}\label{subsec:probability-space-noise}

We fix a complete filtered probability space $(\Omega,\mathcal F,(\mathcal F_t)_{t\geq0},\mathbb P)$ satisfying the usual conditions. The noise space is $U:=L^2(\mathcal O)=H$. Let $Q\in\mathcal L(U)$ be a bounded, nonnegative, self-adjoint, trace-class operator, and let $(e_k)_{k\geq1}$ be an orthonormal basis of $U$ consisting of eigenvectors of $Q$ with corresponding eigenvalues $(\lambda_k)_{k\geq1}$, $\lambda_k\geq0$, that is,
\[
    Qe_k=\lambda_k e_k,
    \qquad k\geq1.
\]
Let $(\beta_k)_{k\geq1}$ be a sequence of independent standard Brownian motions on
$(\Omega,\mathcal F,(\mathcal F_t)_{t\geq0},\mathbb P)$. The associated $Q$-Wiener process is given by
\begin{equation}\label{eq:W-repr}
    W(t)=\sum_{k=1}^\infty \sqrt{\lambda_k}\,\beta_k(t)\,e_k.
\end{equation}
Since $Q$ is trace class, the series in \eqref{eq:W-repr} converges in $U$. We also introduce the Hilbert space
\begin{equation}\label{eq:U0}
    U_0:=Q^{1/2}(U),
    \qquad
    \langle u,v\rangle_{U_0}
    :=
    \langle Q^{-1/2}u,Q^{-1/2}v\rangle_U,
\end{equation}
where $Q^{-1/2}$ denotes the standard pseudoinverse of $Q^{1/2}$. We write $\mathcal L_2^0 := \mathcal L_2(U_0,H)$ for the space of Hilbert--Schmidt operators from $U_0$ into $H$, equipped with its standard Hilbert--Schmidt norm $\|\cdot\|_{\mathcal L_2^0}$. In the abstract framework below, the diffusion coefficients are required to take values in $\mathcal L_2^0$. For multiplication operators, this requirement is naturally encoded by square integrability with respect to the weighted measure $q(x)\,\mathrm dx$, where
\[
    q(x):=\sum_{k=1}^\infty \lambda_k e_k(x)^2.
\]
For the concrete diffusion coefficients considered in this paper, it is convenient to impose the stronger condition
\begin{equation}\label{eq:Q-cond}
    q\in L^\infty(\mathcal O).
\end{equation}
Combined with the standing assumptions on $z$ and $\xi$, this ensures that the operators
$v \mapsto z(u)\,v$ and $v \mapsto \xi(t,\cdot)\,v$
belong to $\mathcal L_2^0$ and satisfy the Lipschitz and growth conditions in Assumptions~\ref{ass:Z} and~\ref{ass:Xi}; see Remark~\ref{rem:verification}. The condition~\eqref{eq:Q-cond} is not imposed at the level of the abstract equation and is not needed for the abstract well-posedness and stabilization results proved below. In particular, \eqref{eq:Q-cond} holds whenever
\begin{equation}\label{eq:sufficient-Q-cond}
    \sum_{k=1}^\infty \lambda_k \|e_k\|_{L^\infty(\mathcal O)}^2<\infty.
\end{equation}


\subsection{Abstract formulation and standing assumptions}
\label{subsec:abstract-equation-assumptions}

We now formulate~\eqref{eq:sys-X} in an abstract variational setting and state the assumptions on the coefficients used in the well-posedness analysis. At the abstract level, the covariance operator $Q$ is assumed to be nonnegative, self-adjoint, and trace class. No pointwise condition on the associated weight is needed here; such a condition is only used later when the abstract assumptions are verified for the concrete multiplication-type noise.

We denote by $A\colon V\to V'$ the principal coercive part, by $A_{\mathrm{rc}}(t)$ the linear reaction--convection contribution, and by $F(t,\cdot)$ the nonlinear reaction term. Thus, in the concrete model, one should think of
\[
    Av=-\nu\Delta v,
    \qquad
    A_{\mathrm{rc}}(t)v=a(t,\cdot)v+b(t,\cdot)\cdot\nabla v,
    \qquad
    F(t,v)=f(v).
\]
The abstract closed-loop equation reads
\begin{equation}\label{eq:abstract-spde}
    \begin{split}
    \mathrm dX
    + \bigl(AX+A_{\mathrm{rc}}(t)X+F(t,X)\bigr)\,\mathrm dt
    &=
    \bigl(Z(t,X)+\Xi(t)\bigr)\,\mathrm dW
    + U_N^\diamond KX\,\mathrm dt,
    \\
    X\rest{t=0} &= X_0 .
    \end{split}
\end{equation}
The precise meaning of~\eqref{eq:abstract-spde} as an identity in $V'$
is given in Section~\ref{subsec:well-posedness}. The input operator $U_N^\diamond\colon\mathbb R^N\to H$ is defined by
\begin{equation}\label{eq:UN-diamond}
    U_N^\diamond v
    :=
    \sum_{i=1}^N v_i\,\1_{\clo_i},
    \qquad
    v=(v_1,\dots,v_N)^\top\in\mathbb R^N .
\end{equation}
Since $K\in\mathcal L(H,\mathbb R^N)$, we have
$U_N^\diamond K\in\mathcal L(H)$. Hence the feedback term
$U_N^\diamond KX$ is a bounded linear perturbation of the drift.

We impose the following standing assumptions on the coefficients.

\begin{assumption}[Standing coefficient assumptions]\label{ass:standing}
All assertions involving the time variable are understood to hold for a.e. $t\geq0$, and all constants below are independent of $t$.

\begin{enumerate}[label=\textup{(A\arabic*)}]

    \item\label{ass:A}
    The operator $A\in\mathcal L(V,V')$ is symmetric and positive definite with respect to the $V$-norm: there exists $\alpha_A>0$
    such that
    \begin{equation}\label{eq:A-positive}
        \langle Ay,y\rangle_{V',V}
        \geq
        \alpha_A\|y\|_V^2
        \qquad\forall\,y\in V .
    \end{equation}
    In this case, $(y,z)\mapsto\langle Ay,z\rangle_{V',V}$ is a complete scalar product on~$V.$
    \item\label{ass:Arc}
    Let us denote
    \begin{equation}\label{eq:X-space}
        \mathfrak X
        :=
        \mathcal L(V,H)+\mathcal L(H,V').
    \end{equation}
    Then, for a.e.\ $t\geq0$, we have $A_{\mathrm{rc}}(t)\in\mathfrak X$.
    Moreover, $A_{\mathrm{rc}}$ is essentially bounded in $\mathfrak X$, i.e.,
    \begin{equation}\label{eq:Arc-X-bound}
        \|A_{\mathrm{rc}}\|_{L^\infty([0,\infty);\mathfrak X)}
        =:
        C_{\mathrm{rc}}
        <\infty.
    \end{equation}
    By $A_{\mathrm{rc}}(t) \in \mathcal L(H,V') + \mathcal L(V,H)$, we mean that $A_{\mathrm{rc}}(t) = A_1(t) + A_2(t)$ with $A_1(t) \in \mathcal L(H,V')$ and $A_2(t) \in \mathcal L(V,H)$.
    \item\label{ass:N}
    The map $(t,y)\mapsto F(t,y)$ is $(\mathcal B([0,\infty))\otimes\mathcal B(V))$-measurable as a map $[0,\infty)\times V\to H$. For a.e.\ $t\geq0$, the map $F(t,\cdot)\colon V\to H$ is continuous and satisfies $F(t,0)=0$. Moreover, $F$ is uniformly Lipschitz with respect to the $H$-norm: there exists $L_{F}>0$ such that
    \begin{equation}\label{eq:N-lip}
        \|F(t,y)-F(t,z)\|_H
        \leq
        L_{F}\|y-z\|_H
        \qquad
        \forall\,y,z\in V .
    \end{equation}

    \item\label{ass:Z}
    With $\mathcal L_2^0:=\mathcal L_2(U_0,H)$, the map
    $Z\colon[0,\infty)\times H\to\mathcal L_2^0$ is
    $(\mathcal B([0,\infty))\otimes\mathcal B(H))$-measurable. Moreover,
    there exists $L_Z>0$ such that, for all $u,v\in H$,
    \begin{align}
        \|Z(t,u)-Z(t,v)\|_{\mathcal L_2^0}^2
        &\leq
        L_Z^2\|u-v\|_H^2,
        \label{eq:Z-lip}
        \\
        \|Z(t,u)\|_{\mathcal L_2^0}^2
        &\leq
        L_Z^2\bigl(1+\|u\|_H^2\bigr).
        \label{eq:Z-growth}
    \end{align}

    \item\label{ass:Xi}
    The additive component
    $\Xi\colon[0,\infty)\to\mathcal L_2^0$ is
    $\mathcal B([0,\infty))$-measurable and satisfies
    \begin{equation}\label{eq:Xi-bdd}
        \Xi_\infty
        :=
        \|\Xi\|_{L^\infty([0,\infty);\mathcal L_2^0)}
        <\infty .
    \end{equation}

\end{enumerate}
\end{assumption}

The following consequence of Assumption~\ref{ass:standing},
items~\ref{ass:Z}--\ref{ass:Xi}, will be used repeatedly. For a.e.\
$t\geq0$ and all $v\in H$,
\begin{equation}\label{eq:trace-bound}
    \|Z(t,v)+\Xi(t)\|_{\mathcal L_2^0}^2
    \leq
    \mathfrak w_1\|v\|_H^2+\mathfrak w_0,
\end{equation}
where
\begin{equation}\label{eq:w-def}
    \mathfrak w_1:=2L_Z^2,
    \qquad
    \mathfrak w_0:=2L_Z^2+2\Xi_\infty^2 .
\end{equation}

\begin{remark}\label{rem:X-space}
The use of the sum space $\mathfrak X=\mathcal L(V,H)+\mathcal L(H,V')$ in \ref{ass:Arc} is only a convenient way to cover both types of lower-order terms in one notation. In particular, if $A_{\mathrm{rc}}(t)\in\mathcal L(H,V')$, then one may use the decomposition $A_{\mathrm{rc}}(t)=A_{\mathrm{rc}}(t)+0$, whereas if $A_{\mathrm{rc}}(t)\in\mathcal L(V,H)$, then one may use the decomposition $A_{\mathrm{rc}}(t)=0+A_{\mathrm{rc}}(t)$. Thus each of the two cases is included as a special case of $A_{\mathrm{rc}}(t)\in\mathfrak X$. In the latter case, for example, we have 
\[
    \|A_{\mathrm{rc}}(t)\|_{\mathfrak X}
    \leq
    \|A_{\mathrm{rc}}(t)\|_{\mathcal L(V,H)}.
\]
\end{remark}

\begin{remark}[Verification for the concrete system]\label{rem:verification}
For the concrete equation~\eqref{eq:sys-X}, the assumptions above are satisfied under the standing hypotheses on the coefficients. In the Dirichlet case, the Laplace operator satisfies Assumption~\ref{ass:A}. In the Neumann case, one may replace the principal part by $A_\rho:=-\nu\Delta+\rho I$ with $\rho>0$. The compensating term $-\rho I$ is then included in $A_{\mathrm{rc}}(t)$. Thus Assumption~\ref{ass:A} does not impose a genuine restriction on the concrete equation.

The reaction--convection operator $A_{\mathrm{rc}}(t)v=a(t,\cdot)v+b(t,\cdot)\cdot\nabla v$ belongs to $\mathfrak X=\mathcal L(V,H)+\mathcal L(H,V')$. Indeed, the reaction part $v\mapsto a(t,\cdot)v$ is bounded from $H$ to $H$, and hence from $H$ to $V'$, while the convection part $v\mapsto b(t,\cdot)\cdot\nabla v$ is bounded from $V$ to $H$. The uniform boundedness of $a$ and $b$ gives~\eqref{eq:Arc-X-bound}. If the term $-\rho I$ is added in the Neumann case as above, it is treated as part of the reaction contribution.

The nonlinear assumption follows from the global Lipschitz continuity of $f$ and the condition $f(0)=0$, with $F(t,v)=f(v)$ and  $L_{F}=L_f$. Finally, the diffusion assumptions follow from the global Lipschitz continuity of $z$, the boundedness of $\xi$, and the Hilbert--Schmidt estimates for the corresponding multiplication operators. In particular, if $q\in L^\infty(\mathcal O)$, then
\[
    \|Z(t,u)-Z(t,v)\|_{\mathcal L_2^0}^2
    \leq
    \|q\|_{L^\infty(\mathcal O)}L_z^2\|u-v\|_H^2,
\]
and analogous estimates yield the linear growth of $Z$ and the boundedness of $\Xi$.
\end{remark}

\subsection{Well-posedness}
\label{subsec:well-posedness}

We establish well-posedness of~\eqref{eq:abstract-spde} within the variational framework of Liu~\cite[Sec.~1.3.1]{Liu06}. The central object is the stochastic evolution equation
\begin{equation}\label{eq:Liu-integral}
    X_t
    =
    X_0
    +
    \int_0^t \mathbf A(s,X_s)\,\mathrm ds
    +
    \int_0^t \mathbf B(s,X_s)\,\mathrm dW_s ,
\end{equation}
where $\mathbf A(t,\cdot)\colon V\to V'$ and $\mathbf B(t,\cdot)\colon V\to \mathcal L_2^0$ denote the drift and diffusion coefficients, respectively. To rewrite  \eqref{eq:abstract-spde} in the form~\eqref{eq:Liu-integral}, we move the drift terms to the right-hand side and set
\begin{equation}\label{eq:AB-def}
    \mathbf A(t,v)
    :=
    -Av-A_{\mathrm{rc}}(t)v-F(t,v)+U_N^\diamond Kv,
    \qquad
    \mathbf B(t,v)
    :=
    Z(t,v)+\Xi(t).
\end{equation}

In order to apply \cite[Thm.~1.3.1]{Liu06} to the controlled equation~\eqref{eq:abstract-spde}, we verify that the coefficients $\mathbf A$ and $\mathbf B$ defined in~\eqref{eq:AB-def} satisfy the following conditions.

\begin{assumption}[Liu's framework conditions]\label{ass:Liu}
There exist constants $\alpha>0$, $c>0$, $L_{\mathbf B}>0$, and $\eta,\gamma\in\mathbb R$ such that, for all $u,v,w\in V$ and a.e. $t\geq0$, the following conditions hold.
\begin{enumerate}[label=\textup{(\alph*)}]

    \item\label{ass:Liu-coercivity}
    \emph{Coercivity.}
    \begin{equation}\label{eq:Liu-coercivity}
        2\langle \mathbf A(t,v),v\rangle_{V',V}
        +
        \|\mathbf B(t,v)\|_{\mathcal L_2^0}^2
        \leq
        -\alpha\|v\|_V^2+\eta\|v\|_H^2+\gamma .
    \end{equation}

    \item\label{ass:Liu-growth}
    \emph{Growth.}
    \begin{equation}\label{eq:Liu-growth}
        \|\mathbf A(t,v)\|_{V'}
        \leq
        c\bigl(1+\|v\|_V\bigr).
    \end{equation}

    \item\label{ass:Liu-monotonicity}
    \emph{Monotonicity.}
    \begin{equation}\label{eq:Liu-monotonicity}
        2\langle \mathbf A(t,u)-\mathbf A(t,v),u-v\rangle_{V',V}
        \leq
        \eta\|u-v\|_H^2
        -
        \|\mathbf B(t,u)-\mathbf B(t,v)\|_{\mathcal L_2^0}^2 .
    \end{equation}

    \item\label{ass:Liu-hemicontinuity}
    \emph{Hemicontinuity.}
    The map
    \begin{equation}\label{eq:Liu-hemicontinuity}
        \mathbb R\ni\phi
        \mapsto
        \langle \mathbf A(t,u+\phi v),w\rangle_{V',V}
        \in\mathbb R
    \end{equation}
    is continuous.

    \item\label{ass:Liu-Lipschitz}
    \emph{Lipschitz condition on $\mathbf B$.}
    \begin{equation}\label{eq:Liu-Lipschitz}
        \|\mathbf B(t,u)-\mathbf B(t,v)\|_{\mathcal L_2^0}
        \leq
        L_{\mathbf B}\|u-v\|_V .
    \end{equation}

\end{enumerate}
\end{assumption}

\begin{proposition}[Verification of Assumption~\ref{ass:Liu}]
\label{prop:Liu-verification}
Suppose that Assumption~\ref{ass:standing} holds and that $\mathbf A$ and $\mathbf B$ are defined by~\eqref{eq:AB-def}. Then $\mathbf A$ and $\mathbf B$ satisfy Assumption~\ref{ass:Liu}. More precisely, let $\kappa_K:=\|U_N^\diamond K\|_{\mathcal L(H)}$. For any $\varepsilon\in(0,2\alpha_A)$, one may choose
\begin{equation}\label{eq:Liu-constants}
\begin{gathered}
    \alpha:=2\alpha_A-\varepsilon,\qquad
    c:=\|A\|_{\mathcal L(V,V')}+C_{\mathrm{rc}}+L_{F}+\kappa_K,
    \qquad
    L_{\mathbf B}:=L_Z,\\
    \eta:=\varepsilon^{-1}C_{\mathrm{rc}}^2
    +2L_{F}+2\kappa_K+\mathfrak w_1,
    \qquad
    \gamma:=\mathfrak w_0 .
\end{gathered}
\end{equation}
\end{proposition}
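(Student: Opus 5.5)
The plan is to verify the five conditions of Assumption~\ref{ass:Liu} one at a time, each directly from the corresponding item of Assumption~\ref{ass:standing}. The only tools needed are the contractive embeddings~\eqref{eq:embed-contractive}, Young's inequality, and the bound~\eqref{eq:trace-bound}. Throughout, fix a.e.\ $t\geq0$ and write $\kappa_K=\|U_N^\diamond K\|_{\mathcal L(H)}$.

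The one genuinely non-routine point is the sum-space norm in~\ref{ass:Arc}. For $\delta>0$ I will pick a decomposition $A_{\mathrm{rc}}(t)=A_1+A_2$ with $A_1\in\mathcal L(H,V')$, $A_2\in\mathcal L(V,H)$ and $\|A_1\|_{\mathcal L(H,V')}+\|A_2\|_{\mathcal L(V,H)}\leq C_{\mathrm{rc}}+\delta$. Then, for $v\in V$,
\[
|\langle A_{\mathrm{rc}}(t)v,v\rangle_{V',V}|
\leq \|A_1\|\,\|v\|_H\|v\|_V+\|A_2\|\,\|v\|_V\|v\|_H
\leq (C_{\mathrm{rc}}+\delta)\|v\|_V\|v\|_H .
\]
Similarly,
\[
\|A_{\mathrm{rc}}(t)v\|_{V'}\leq \|A_1\|\,\|v\|_H+\|A_2 v\|_H\leq (C_{\mathrm{rc}}+\delta)\|v\|_V,
\]
where the second inequality uses $\|h\|_{V'}\leq\|h\|_H$ and $\|v\|_H\leq\|v\|_V$. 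Since every final inequality is non-strict and $\delta$ is arbitrary, letting $\delta\to0$ gives both bounds with $C_{\mathrm{rc}}$. Young's inequality then yields
\[
2(C_{\mathrm{rc}}+\delta)\|v\|_V\|v\|_H\leq\varepsilon\|v\|_V^2+\varepsilon^{-1}(C_{\mathrm{rc}}+\delta)^2\|v\|_H^2 .
\]
This is exactly where the constants $\alpha=2\alpha_A-\varepsilon$ and $\varepsilon^{-1}C_{\mathrm{rc}}^2$ come from.

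\emph{Coercivity.} Expand $2\langle\mathbf A(t,v),v\rangle$ term by term:
\begin{itemize}
\item $-2\langle Av,v\rangle\leq-2\alpha_A\|v\|_V^2$ by~\eqref{eq:A-positive};
\item the $A_{\mathrm{rc}}$ term is bounded as above;
\item $2|\langle F(t,v),v\rangle_H|\leq2L_F\|v\|_H^2$, using $F(t,0)=0$ and~\eqref{eq:N-lip};
\item $2\langle U_N^\diamond Kv,v\rangle_H\leq2\kappa_K\|v\|_H^2$.
\end{itemize}
Adding $\|\mathbf B(t,v)\|^2_{\mathcal L_2^0}\leq\mathfrak w_1\|v\|_H^2+\mathfrak w_0$ from~\eqref{eq:trace-bound} gives~\eqref{eq:Liu-coercivity} with the stated $\alpha,\eta,\gamma$.

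\emph{Monotonicity.} Apply the same computation to $w=u-v$, using linearity of $A$, $A_{\mathrm{rc}}$ and $U_N^\diamond K$, and the Lipschitz bound~\eqref{eq:N-lip} for $F$. This gives
\[
2\langle\mathbf A(t,u)-\mathbf A(t,v),w\rangle\leq-(2\alpha_A-\varepsilon)\|w\|_V^2+(\eta-\mathfrak w_1)\|w\|_H^2 .
\]
Drop the nonpositive $V$-term and add and subtract $\|\mathbf B(t,u)-\mathbf B(t,v)\|^2_{\mathcal L_2^0}$. Since this quantity is at most $L_Z^2\|w\|_H^2\leq\mathfrak w_1\|w\|_H^2$ by~\eqref{eq:Z-lip}, the right-hand side is bounded by $\eta\|w\|_H^2-\|\mathbf B(t,u)-\mathbf B(t,v)\|^2_{\mathcal L_2^0}$. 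Note that $\Xi$ cancels in the difference.

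\emph{Growth, hemicontinuity, Lipschitz.} For growth, combine $\|Av\|_{V'}\leq\|A\|_{\mathcal L(V,V')}\|v\|_V$, the $A_{\mathrm{rc}}$ bound above, $\|F(t,v)\|_{V'}\leq L_F\|v\|_H$, and $\|U_N^\diamond Kv\|_{V'}\leq\kappa_K\|v\|_H$. Then use $\|\cdot\|_H\leq\|\cdot\|_V$ to get $\|\mathbf A(t,v)\|_{V'}\leq c\|v\|_V\leq c(1+\|v\|_V)$.

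For hemicontinuity, $\phi\mapsto u+\phi v$ is continuous into $V$, and the linear parts are bounded $V\to V'$. The map $F(t,\cdot)\colon V\to H$ is continuous by~\ref{ass:N}, and the pairing with $w$ is continuous on $H$.

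For the Lipschitz condition on $\mathbf B$, \eqref{eq:Z-lip} and~\eqref{eq:embed-contractive} give $\|\mathbf B(t,u)-\mathbf B(t,v)\|_{\mathcal L_2^0}\leq L_Z\|u-v\|_H\leq L_Z\|u-v\|_V$.

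I expect no step to be a real obstacle. The only care needed is the infimum in the sum-space norm, which the $\delta$-argument handles, and tracking signs so that the $\mathbf B$ term in~\eqref{eq:Liu-monotonicity} is absorbed by the $\mathfrak w_1$ contribution in $\eta$.
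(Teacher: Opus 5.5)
Your proposal is correct and follows the paper's proof essentially step by step: the same term-by-term expansion for coercivity and monotonicity, the same Young's inequality with parameter $\varepsilon$, absorption of $L_Z^2\le\mathfrak w_1$ into $\eta$, and the same arguments for growth, hemicontinuity and the Lipschitz bound on $\mathbf B$. The only difference is that you justify $|\langle A_{\mathrm{rc}}(t)v,v\rangle_{V',V}|\le C_{\mathrm{rc}}\|v\|_V\|v\|_H$ and $\|A_{\mathrm{rc}}(t)v\|_{V'}\le C_{\mathrm{rc}}\|v\|_V$ by choosing a near-optimal decomposition in the sum space $\mathfrak X$ and letting $\delta\to0$, whereas the paper simply states these bounds.
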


\begin{proof}
Fix $t\geq0$ such that Assumption~\ref{ass:standing} holds at time $t$. We verify conditions~\ref{ass:Liu-coercivity}--\ref{ass:Liu-Lipschitz}.

\smallskip
\noindent\textbf{\ref{ass:Liu-coercivity}~Coercivity.}
Let $v\in V$. By~\eqref{eq:AB-def}, we can write 
\[
\begin{aligned}
    &2\langle \mathbf A(t,v),v\rangle_{V',V}
    +
    \|\mathbf B(t,v)\|_{\mathcal L_2^0}^2
    =
    -2\langle Av,v\rangle_{V',V}
    -2\langle A_{\mathrm{rc}}(t)v,v\rangle_{V',V}
    \\
    &\quad
    -2\langle F(t,v),v\rangle_H
    +2\langle U_N^\diamond Kv,v\rangle_H +\|Z(t,v)+\Xi(t)\|_{\mathcal L_2^0}^2 .
\end{aligned}
\]
Using~\eqref{eq:A-positive}, the fact that
\[
    |\langle A_{\mathrm{rc}}(t)v,v\rangle_{V',V}|
    \leq
    C_{\mathrm{rc}}\|v\|_V\|v\|_H,
\]
and Young's inequality, we have for every $\varepsilon>0$ that 
\begin{equation}
\label{eq:cor_est1}
    -2\langle A_{\mathrm{rc}}(t)v,v\rangle_{V',V}
    \leq
    \varepsilon\|v\|_V^2
    +
    \varepsilon^{-1}C_{\mathrm{rc}}^2\|v\|_H^2.
\end{equation}
Moreover, by $F(t,0)=0$ and~\eqref{eq:N-lip}, we obtain  $\|F(t,v)\|_H  \leq  L_{F}\|v\|_H$,  and hence
\[
    -2\langle F(t,v),v\rangle_H
    \leq
    2L_{F}\|v\|_H^2.
\]
The feedback term is estimated by
\begin{equation}
\label{eq:cor_est2}
    2\langle U_N^\diamond Kv,v\rangle_H
    \leq
    2\kappa_K\|v\|_H^2 .
\end{equation}
Finally, by~\eqref{eq:trace-bound}, together with the above estimates, we can infer that
\[
\begin{aligned}
    2\langle \mathbf A(t,v),v\rangle_{V',V}
    +
    \|\mathbf B(t,v)\|_{\mathcal L_2^0}^2
    &\leq
    -(2\alpha_A-\varepsilon)\|v\|_V^2
    \\
    &\quad
    +
    \bigl(
        \varepsilon^{-1}C_{\mathrm{rc}}^2
        +2L_{F}
        +2\kappa_K
        +\mathfrak w_1
    \bigr)
    \|v\|_H^2
    +
    \mathfrak w_0 .
\end{aligned}
\]
Thus~\eqref{eq:Liu-coercivity} holds with the constants
in~\eqref{eq:Liu-constants}, provided
$\varepsilon\in(0,2\alpha_A)$.

\smallskip
\noindent\textbf{\ref{ass:Liu-growth}~Growth.}
Let $v\in V$. Using~\eqref{eq:AB-def}, Assumption~\ref{ass:standing}, and \eqref{eq:embed-contractive}, we have  we obtain
\[
\begin{aligned}
   & \|\mathbf A(t,v)\|_{V'}
    \leq
    \|Av\|_{V'}
    +
    \|A_{\mathrm{rc}}(t)v\|_{V'}
    +
    \|F(t,v)\|_{V'}
    +
    \|U_N^\diamond Kv\|_{V'}
    \\
    &\leq
    \|A\|_{\mathcal L(V,V')}\|v\|_V
    +
    C_{\mathrm{rc}}\|v\|_V
    +
    L_{F}\|v\|_H
    +
    \kappa_K\|v\|_H
    \\
    &\leq
    \bigl(
        \|A\|_{\mathcal L(V,V')}
        +C_{\mathrm{rc}}
        +L_{F}
        +\kappa_K
    \bigr)
    \|v\|_V
    \leq
    c\bigl(1+\|v\|_V\bigr).
\end{aligned}
\]
Here we have used the canonical embeddings $H\hookrightarrow V'$ and $V\hookrightarrow H$.

\smallskip
\noindent\textbf{\ref{ass:Liu-monotonicity}~Monotonicity.}
Let $u,v\in V$ and set $w:=u-v$. By~\eqref{eq:AB-def},
\begin{equation}
\label{eq:mon_esti1}
\begin{aligned}
    2\langle \mathbf A(t,u)-\mathbf A(t,v),w\rangle_{V',V}
    &=
    -2\langle Aw,w\rangle_{V',V}
    -2\langle A_{\mathrm{rc}}(t)w,w\rangle_{V',V}
    \\
    &\quad
    -2\langle F(t,u)-F(t,v),w\rangle_H
    +2\langle U_N^\diamond Kw,w\rangle_H .
\end{aligned}
\end{equation}
The Lipschitz continuity of $F$ yields
\[
    -2\langle F(t,u)-F(t,v),w\rangle_H
    \leq
    2L_{F}\|w\|_H^2,
\]
Together with using estimates  \eqref{eq:A-positive}, \eqref{eq:cor_est1}, and \eqref{eq:cor_est2} for $w$, in \eqref{eq:mon_esti1}, we obtain
\begin{equation}
\label{eq:mon_esti2}
\begin{aligned}
    2\langle \mathbf A(t,u)-\mathbf A(t,v),w\rangle_{V',V}
    &\leq
    -(2\alpha_A-\varepsilon)\|w\|_V^2
    \\
    &\quad
    +
    \bigl(
        \varepsilon^{-1}C_{\mathrm{rc}}^2
        +2L_{F}
        +2\kappa_K
    \bigr)
    \|w\|_H^2.
\end{aligned}
\end{equation}
Further, using ~\eqref{eq:Z-lip},  we can write  
\begin{equation}
\label{eq:mon_esti3}
    \|\mathbf B(t,u)-\mathbf B(t,v)\|_{\mathcal L_2^0}^2
    \leq \|Z(t,u)-Z(t,v)\|_{\mathcal L_2^0}^2 \leq
    L_Z^2\|w\|_H^2 .
\end{equation}
Adding this estimate to \eqref{eq:mon_esti2} yields
\[
\begin{aligned}
    &2\langle \mathbf A(t,u)-\mathbf A(t,v),w\rangle_{V',V}
    +
    \|\mathbf B(t,u)-\mathbf B(t,v)\|_{\mathcal L_2^0}^2
    \leq \\ &
    \bigl(
        \varepsilon^{-1}C_{\mathrm{rc}}^2
        +2L_{F}
        +2\kappa_K
        +L_Z^2
    \bigr)
    \|w\|_H^2 \leq
    \eta\|w\|_H^2,
\end{aligned}
\]
since $\mathfrak w_1=2L_Z^2$. This is equivalent to
\eqref{eq:Liu-monotonicity}.

\smallskip
\noindent\textbf{\ref{ass:Liu-hemicontinuity}~Hemicontinuity.}
Fix $u,v,w\in V$. The map
\[
    \phi
    \mapsto
    \langle A(u+\phi v),w\rangle_{V',V}
    +
    \langle A_{\mathrm{rc}}(t)(u+\phi v),w\rangle_{V',V}
    +
    \langle U_N^\diamond K(u+\phi v),w\rangle_H
\]
is affine in $\phi$. Moreover, by the continuity of
$F(t,\cdot)\colon V\to H$, the mapping 
\[
    \phi
    \mapsto
    \langle F(t,u+\phi v),w\rangle_H
\]
is continuous. Hence  $\phi \mapsto  \langle \mathbf A(t,u+\phi v),w\rangle_{V',V}$ is continuous.

\smallskip
\noindent\textbf{\ref{ass:Liu-Lipschitz}~Lipschitz continuity of $\mathbf B$.} This follows by using \eqref{eq:mon_esti3} and the continuous embedding $V\hookrightarrow H$ (see
\eqref{eq:embed-contractive}). Thus condition~\ref{ass:Liu-Lipschitz} holds with $L_{\mathbf B}=L_Z$. The proof is complete.
\end{proof}

With Assumption~\ref{ass:Liu} verified, the abstract equation~\eqref{eq:abstract-spde} falls within the variational framework of \cite[Sec.~1.3.1]{Liu06}. We next recall the corresponding notion of strong variational solution and then state the resulting global well-posedness property.

\begin{definition}[Strong variational solution {\cite[Def.~1.3.1]{Liu06}}]
\label{def:strong-solution}
Let $T>0$ and $p\geq1$. An $H$-valued $(\mathcal F_t)_{t\geq0}$-adapted process $X=(X_t)_{t\in[0,T]}$ is called a \emph{strong variational solution} of~\eqref{eq:Liu-integral} on $[0,T]$ if $X\in L^p\bigl(\Omega;L^p(0,T;V)\bigr)$ and, for every $t\in[0,T]$, the identity 
\begin{equation}\label{eq:strog_sol_def}
    X_t = X_0 + \int_0^t \mathbf A(s,X_s)\,\mathrm ds + \int_0^t \mathbf B(s,X_s)\,\mathrm dW_s \quad \text{ in } V'
\end{equation}
holds $\mathbb P$-almost surely. If the same property holds on every finite interval $[0,T]$, then $X$ is called a \emph{global strong variational solution}.
\end{definition}

\begin{proposition}[Global well-posedness]
\label{prop:wp}
Suppose that Assumption~\ref{ass:standing} holds. Then, for every $X_0\in L^2(\Omega,\mathcal F_0,\mathbb P;H)$, there exists a unique global strong variational solution $X$ of~\eqref{eq:abstract-spde}. Moreover, for every $T>0$,
\begin{enumerate}
    \item $X$ is $(\mathcal F_t)_{t\geq0}$-progressively measurable on
    $[0,T]$;
    \item $X\in L^2(\Omega;L^2(0,T;V))$;
    \item $X(\cdot,\omega)\in C([0,T];H)$ for $\mathbb P$-a.e.\
    $\omega$.
\end{enumerate}
In particular, $X(\cdot,\omega)\in C([0,\infty);H)$ for $\mathbb P$-a.e. $\omega$.
\end{proposition}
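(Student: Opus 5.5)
The plan is to obtain Proposition~\ref{prop:wp} directly from Liu's existence and uniqueness theorem \cite[Thm.~1.3.1]{Liu06}, using Proposition~\ref{prop:Liu-verification} to supply its hypotheses. Fix $T>0$. By Proposition~\ref{prop:Liu-verification}, the coefficients $\mathbf A,\mathbf B$ in~\eqref{eq:AB-def} satisfy coercivity, growth, monotonicity, hemicontinuity and the Lipschitz condition of Assumption~\ref{ass:Liu}, with the constants~\eqref{eq:Liu-constants}, uniformly for a.e.\ $t\in[0,T]$. Here we use the exponent $p=2$, which matches the quadratic coercivity~\eqref{eq:Liu-coercivity} and the linear growth~\eqref{eq:Liu-growth}.

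We also need the measurability requirements implicit in Liu's framework. For $\mathbf A$, the terms $Av$, $A_{\mathrm{rc}}(t)v$ and $U_N^\diamond Kv$ are linear and bounded in $v$. Measurability in $t$ of $A_{\mathrm{rc}}$ is implicit in the $L^\infty$ statement~\eqref{eq:Arc-X-bound}, and $F$ is jointly measurable by~\ref{ass:N}. For $\mathbf B$, joint measurability of $Z$ and measurability of $\Xi$ are assumed in~\ref{ass:Z} and~\ref{ass:Xi}. Finally, $X_0\in L^2(\Omega,\mathcal F_0,\mathbb P;H)$ is exactly the initial-data requirement.

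Given this, \cite[Thm.~1.3.1]{Liu06} yields a unique strong variational solution on $[0,T]$ in the sense of Definition~\ref{def:strong-solution}. The same theorem, together with the underlying It\^o formula of Krylov--Rozovskii type, gives the three listed properties: progressive measurability, $X\in L^2(\Omega;L^2(0,T;V))$, and a version with continuous paths in $H$. If the cited theorem only provides $L^2(\Omega;L^2(0,T;V))$ together with $\sup_t\mathbb E\|X_t\|_H^2<\infty$, I would obtain pathwise $H$-continuity from the It\^o formula for $\|X_t\|_H^2$ in the Gelfand triple~\eqref{eq:gelfand-triple}.

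To pass to $[0,\infty)$, I will use uniqueness and consistency. Let $T_1<T_2$, and let $X^{T_1}$, $X^{T_2}$ be the solutions on $[0,T_1]$ and $[0,T_2]$. The restriction of $X^{T_2}$ to $[0,T_1]$ is a solution on $[0,T_1]$, so by uniqueness it is indistinguishable from $X^{T_1}$. Taking $T=n\in\mathbb N$ and discarding the countable union of null sets, we define $X$ on $[0,\infty)$ by patching. The resulting process is a global strong variational solution with $X(\cdot,\omega)\in C([0,\infty);H)$ almost surely, and any other global solution agrees with $X$ on each $[0,n]$.

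The step I expect to need the most care is the uniqueness statement, which is the main obstacle. Liu's uniqueness is stated up to indistinguishability among solutions with continuous $H$-valued versions. So I would check that it applies in that class; if needed, I would prove it directly. For that, apply It\^o's formula to $\|X_t-Y_t\|_H^2$ and use the monotonicity~\eqref{eq:Liu-monotonicity}, which gives $\mathbb E\|X_t-Y_t\|_H^2\le\eta\int_0^t\mathbb E\|X_s-Y_s\|_H^2\,\mathrm ds$. Gronwall's inequality then forces the difference to vanish, and pathwise continuity upgrades this to indistinguishability.
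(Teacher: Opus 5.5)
Your proposal is correct and follows essentially the same route as the paper. You fix $T>0$, invoke \cite[Thm.~1.3.1]{Liu06} via Proposition~\ref{prop:Liu-verification} to get a unique solution on $[0,T]$ with the three listed properties, use uniqueness (indistinguishability of restrictions) to patch the solutions on $[0,n]$, and intersect countably many full-measure sets to obtain $H$-continuity on $[0,\infty)$. Your extra measurability checks and fallback It\^o--Gronwall uniqueness argument go beyond what the paper writes but are consistent with it; that fallback would need a standard stopping-time localization of the martingale term.
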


\begin{proof}
Fix $T>0$. By Proposition~\ref{prop:Liu-verification}, the coefficients $\mathbf A$ and $\mathbf B$ satisfy the hypotheses of \cite[Thm.~1.3.1]{Liu06} on $[0,T]$. Hence there exists a unique strong variational solution $X^T$ of~\eqref{eq:Liu-integral} on $[0,T]$. In addition, $X^T$ is progressively measurable, belongs to $L^2(\Omega;L^2(0,T;V))$, and has $H$-continuous trajectories almost surely.

If $0<S<T$, then the restrictions of $X^S$ and $X^T$ to $[0,S]$ are both strong variational solutions on $[0,S]$. By uniqueness in \cite[Thm.~1.3.1]{Liu06}, they are indistinguishable on $[0,S]$. Hence the family $(X^T)_{T>0}$ is consistent on overlaps and defines a unique global process $X$ on $[0,\infty)$ with the stated properties on every finite interval.

Finally, for each $n\in\mathbb N$, let $\Omega_n := \{\omega\in\Omega:\ X(\cdot,\omega)\in C([0,n];H)\}$. Then $\mathbb P(\Omega_n)=1$ for every $n$, and therefore $\mathbb P\Bigl(\bigcap_{n=1}^\infty\Omega_n\Bigr)=1$. On this full-measure set, $X(\cdot,\omega)\in C([0,n];H)$ for every $n\in\mathbb N$, hence $X(\cdot,\omega)\in C([0,\infty);H)$. This proves the claim.
\end{proof}

\subsection{It\^o formula}
\label{subsec:ito-formula}

We use the following variational It\^o formula in the stability analysis. It applies to the strong variational solution $X$ of~\eqref{eq:Liu-integral} in the Gelfand triple~\eqref{eq:gelfand-triple}. In particular, we shall apply it to the exponentially weighted energy $\Psi(t,v)=e^{2\mu t}\|v\|_H^2$ for $\mu>0$ and, after a localization argument, to the logarithmic energy $\Psi(v)=\log\|v\|_H^2$ for $v\in H\setminus\{0\}$. These identities will be used in Section~\ref{sec:feed-stab} in the proof of the mean-square and pathwise stabilization results of Theorem~\ref{thm:main}. The logarithmic argument leading to pathwise stabilization is inspired by \cite[Thm.~3.9.1]{Liu06}.

For later use, we recall the standard spaces associated with the stochastic integral. For $T>0$, we denote by $\mathcal N_W^2(0,T;\mathcal L_2^0)$ the space of all predictable processes  $\Phi\colon\Omega\times[0,T]\to\mathcal L_2^0$ such that
\[
    \mathbb E\int_0^T \|\Phi(s)\|_{\mathcal L_2^0}^2\,\mathrm ds<\infty,
\]
and by $\mathcal N_W(0,T;\mathcal L_2^0)$ the space of all predictable processes $\Phi\colon\Omega\times[0,T]\to\mathcal L_2^0$ such that
\[
    \mathbb P\left(
        \int_0^T \|\Phi(s)\|_{\mathcal L_2^0}^2\,\mathrm ds<\infty
    \right)=1.
\]

\begin{lemma}[Variational It\^o formula {\cite[Thm.~1.3.3]{Liu06}}]
\label{lem:ito}
Let $T>0$, and let $X$ be the strong variational solution
of~\eqref{eq:Liu-integral} on $[0,T]$. Let
$\Psi\colon[0,T]\times H\to\mathbb R$ satisfy the following conditions:
\begin{enumerate}[label=\textup{(\roman*)}]

    \item\label{cond:ito-i}
    for every $v\in V$, the map $t\mapsto\Psi(t,v)$ is differentiable
    on $[0,T]$, and for every $t\in[0,T]$, the map
    $v\mapsto\Psi(t,v)$ is twice Fr\'echet differentiable on $H$;

    \item\label{cond:ito-ii}
    the functions $\partial_t\Psi$, $D\Psi$, and $D^2\Psi$ are locally
    bounded on $[0,T]\times H$, and $\Psi$ and $D\Psi$ are continuous
    on $[0,T]\times H$. Here, $D\Psi(t,v)$ and $D^2\Psi(t,v)$ denote the first and second Fr\'echet derivatives of $\Psi$ with respect to the state variable $v$.

    \item\label{cond:ito-iii}
    for every trace-class operator $S\colon H\to H$, the map
    \[
        (t,v)\mapsto
        \operatorname{Tr}\bigl(SD^2\Psi(t,v)\bigr)
    \]
    is continuous on $[0,T]\times H$;

    \item\label{cond:ito-iv}
    if $v\in V$, then $D\Psi(t,v)\in V$ for every $t\in[0,T]$, and
    for every $\varphi\in V'$, the map
    \[
        (t,v)\mapsto
        \langle \varphi,D\Psi(t,v)\rangle_{V',V}
    \]
    is continuous on $[0,T]\times V$;

    \item\label{cond:ito-v}
    there exists a constant $C_{\Psi,T}>0$ such that
    \[
        \|D\Psi(t,v)\|_V
        \leq
        C_{\Psi,T}\bigl(1+\|v\|_V\bigr)
        \qquad
        \forall\,t\in[0,T],\ \forall\,v\in V .
    \]

\end{enumerate}
Then, $\mathbb P$-almost surely, the identity
\begin{equation}\label{eq:ito-general}
\begin{aligned}
    \Psi(t,X_t)
    &=
    \Psi(0,X_0)
    +
    \int_0^t
    \bigl\langle
        D\Psi(s,X_s),
        \mathbf B(s,X_s)\,\mathrm dW_s
    \bigr\rangle_H
    \\
    &\quad
    +
    \int_0^t
    \Bigl(
        \partial_t\Psi(s,X_s)
        +
        \langle\mathbf A(s,X_s),D\Psi(s,X_s)\rangle_{V',V}
    \Bigr)\,\mathrm ds
    \\
    &\quad
    +
    \frac12
    \int_0^t
    \operatorname{Tr}\Bigl(
        D^2\Psi(s,X_s)
        \mathbf B(s,X_s)\mathbf B(s,X_s)^*
    \Bigr)\,\mathrm ds
\end{aligned}
\end{equation}
holds for all $t\in[0,T]$.
\end{lemma}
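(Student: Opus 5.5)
This lemma is quoted from \cite[Thm.~1.3.3]{Liu06}, so the plan is to reduce it to the setting of that theorem rather than reprove a variational It\^o formula from scratch. First, by Proposition~\ref{prop:Liu-verification}, the coefficients $\mathbf A$ and $\mathbf B$ in~\eqref{eq:AB-def} satisfy Assumption~\ref{ass:Liu} on $[0,T]$. By Proposition~\ref{prop:wp}, $X$ is progressively measurable, lies in $L^2(\Omega;L^2(0,T;V))$, and has $H$-continuous paths. Hence $X$ is a process of the type considered there, namely
\[
X_t=X_0+\int_0^t Y_s\,\mathrm ds+\int_0^t Z_s\,\mathrm dW_s
\]
in $V'$, with $Y:=\mathbf A(\cdot,X)$ and $Z:=\mathbf B(\cdot,X)$. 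I will check the integrability Liu requires. The growth bound~\eqref{eq:Liu-growth} gives $Y\in L^2(\Omega;L^2(0,T;V'))$. The bound~\eqref{eq:trace-bound}, together with $X\in L^2(\Omega;L^2(0,T;H))$, gives $Z\in\mathcal N^2_W(0,T;\mathcal L_2^0)$.

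Second, conditions~\ref{cond:ito-i}--\ref{cond:ito-v} on $\Psi$ are exactly the hypotheses of the cited theorem, so the identity~\eqref{eq:ito-general} follows once $Y$ and $Z$ are substituted. For a self-contained argument, I would proceed in four steps.
\begin{enumerate}[label=(\arabic*)]
\item Approximate $X$ by piecewise constant (or Galerkin/Yosida-regularized) $V$-valued processes on a partition of $[0,T]$.
\item Apply a second-order Taylor expansion of $\Psi$ in $v$ and a first-order expansion in $t$ on each subinterval.
\item Identify the first-order term with $\langle Y,D\Psi\rangle_{V',V}$, using~\ref{cond:ito-iv} and~\ref{cond:ito-v} to make sense of the pairing. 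Identify the second-order term with the trace of $D^2\Psi\,ZZ^*$, using~\ref{cond:ito-iii} and the quadratic variation of the $Q$-Wiener process.
\item Pass to the limit along the mesh using the local boundedness in~\ref{cond:ito-ii}.
\end{enumerate}

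The main obstacle in this direct route is the drift term. Here $Y$ is only $V'$-valued, so $\langle Y_s,D\Psi(s,X_s)\rangle$ requires $X_s\in V$ for a.e.\ $s$. Convergence of this term therefore needs the linear-growth bound~\ref{cond:ito-v} together with $X\in L^2(0,T;V)$. One must also localize with stopping times $\tau_n=\inf\{t:\|X_t\|_H>n\}$ so that the locally bounded derivatives become bounded; the $H$-continuity of paths gives $\tau_n\to T$ almost surely.

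For the later applications I would also verify the hypotheses for $\Psi(t,v)=e^{2\mu t}\|v\|_H^2$. In that case $D\Psi=2e^{2\mu t}v$ and $D^2\Psi=2e^{2\mu t}I$, so all the conditions are immediate. For $\log\|v\|_H^2$ the lemma is applied only after the localization mentioned in the text.
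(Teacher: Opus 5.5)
Your proposal is correct and follows the paper's proof. Both use Proposition~\ref{prop:wp}, the growth bound~\eqref{eq:Liu-growth}, and the trace bound~\eqref{eq:trace-bound} to obtain $\mathbf A(\cdot,X)\in L^2(\Omega;L^2(0,T;V'))$ and $\mathbf B(\cdot,X)\in\mathcal N_W^2(0,T;\mathcal L_2^0)$, and then invoke \cite[Thm.~1.3.3]{Liu06} directly. Two details the paper states explicitly that you leave implicit are the predictability of $\mathbf B(\cdot,X)$ and the fact that $V$ and $V'$ are Hilbert spaces, so the remark following Liu's theorem applies; your optional four-step self-contained sketch is not needed.
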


\begin{proof}
We fix $T>0$. Due to Proposition~\ref{prop:wp}, the solution $X$ is progressively measurable on $[0,T]$, belongs to $L^2(\Omega;L^2(0,T;V))$, has $H$-continuous trajectories almost surely, and satisfies \eqref{eq:strog_sol_def} for all $t\in[0,T]$, almost surely. Using~\eqref{eq:Liu-growth}, we can infer that 
\[
    \mathbf A(\cdot,X)\in L^2(\Omega;L^2(0,T;V')).
\]
Moreover, $\mathbf B(\cdot,X)$ is predictable and, by \eqref{eq:trace-bound} and the continuous embedding $V\hookrightarrow H$, we obtain that 
\[
    \mathbb E\int_0^T
    \|\mathbf B(s,X_s)\|_{\mathcal L_2^0}^2\,\mathrm ds
    <\infty.
\]
Thus, we have $\mathbf B(\cdot,X)\in \mathcal N_W^2(0,T;\mathcal L_2^0)$. Since $V$ and $V'$ are Hilbert spaces, the hypotheses of \cite[Thm.~1.3.3]{Liu06} and the subsequent remark are satisfied, and \eqref{eq:ito-general} follows.
\end{proof}

\begin{corollary}[Exponentially weighted energy identity]
\label{cor:ito-energy}
Under the hypotheses of Lemma~\ref{lem:ito}, for every $\mu>0$, $T>0$, and $\mathbb P$-almost surely, the following identity holds for all $t\in[0,T]$:
\begin{equation}\label{eq:ito-energy}
\begin{aligned}
    &e^{2\mu t}\|X_t\|_H^2
    =
    \|X_0\|_H^2
    \\
    &\quad
    +
    \int_0^t e^{2\mu s}
    \Bigl[
        2\mu\|X_s\|_H^2
        +
        2\langle\mathbf A(s,X_s),X_s\rangle_{V',V}
        +
        \|\mathbf B(s,X_s)\|_{\mathcal L_2^0}^2
    \Bigr]\,\mathrm ds
    +
    M(t),
\end{aligned}
\end{equation}
where
\begin{equation}\label{eq:martingale-def}
    M(t)
    :=
    2\int_0^t e^{2\mu s}
    \langle X_s,\mathbf B(s,X_s)\,\mathrm dW_s\rangle_H .
\end{equation}
\end{corollary}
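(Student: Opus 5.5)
We apply Lemma~\ref{lem:ito} with the explicit choice $\Psi(t,v):=e^{2\mu t}\|v\|_H^2$ on $[0,T]\times H$. The plan has three steps: verify hypotheses~\ref{cond:ito-i}--\ref{cond:ito-v} for this $\Psi$, compute the derivatives, and substitute them into \eqref{eq:ito-general}.

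\textbf{Derivatives.} For fixed $t$, the map $v\mapsto\Psi(t,v)$ is a continuous quadratic form on $H$, so it is smooth. Its derivatives are
\[
\partial_t\Psi(t,v)=2\mu e^{2\mu t}\|v\|_H^2,\qquad D\Psi(t,v)=2e^{2\mu t}v,\qquad D^2\Psi(t,v)=2e^{2\mu t}I_H .
\]
Here the gradient is identified through the Riesz isomorphism of the pivot space $H$.

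\textbf{Hypotheses.} Differentiability in $t$ and twice Fr\'echet differentiability in $v$ are immediate, which gives~\ref{cond:ito-i}. On bounded subsets of $[0,T]\times H$ we have $|\partial_t\Psi|\le 2\mu e^{2\mu T}\|v\|_H^2$, $\|D\Psi\|_H\le 2e^{2\mu T}\|v\|_H$ and $\|D^2\Psi\|_{\mathcal L(H)}\le 2e^{2\mu T}$. These bounds give local boundedness, and continuity of $\Psi$ and $D\Psi$ is clear, so~\ref{cond:ito-ii} holds. For a trace-class $S$, $\operatorname{Tr}(SD^2\Psi(t,v))=2e^{2\mu t}\operatorname{Tr}S$ is continuous in $(t,v)$, which is~\ref{cond:ito-iii}. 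If $v\in V$, then $D\Psi(t,v)=2e^{2\mu t}v\in V$. Moreover, for $\varphi\in V'$, the map $(t,v)\mapsto 2e^{2\mu t}\langle\varphi,v\rangle_{V',V}$ is continuous on $[0,T]\times V$, being a product of continuous maps, which is~\ref{cond:ito-iv}. Finally, $\|D\Psi(t,v)\|_V\le 2e^{2\mu T}\|v\|_V$, so~\ref{cond:ito-v} holds with $C_{\Psi,T}=2e^{2\mu T}$.

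\textbf{Substitution.} We insert these derivatives into \eqref{eq:ito-general}, using $\Psi(0,X_0)=\|X_0\|_H^2$.
\begin{itemize}
\item The drift term becomes $e^{2\mu s}\bigl(2\mu\|X_s\|_H^2+2\langle\mathbf A(s,X_s),X_s\rangle_{V',V}\bigr)$.
\item The stochastic term becomes $2\int_0^t e^{2\mu s}\langle X_s,\mathbf B(s,X_s)\,\mathrm dW_s\rangle_H=M(t)$.
\item The It\^o correction becomes $\tfrac12\cdot 2e^{2\mu s}\operatorname{Tr}\bigl(\mathbf B(s,X_s)\mathbf B(s,X_s)^*\bigr)$.
\end{itemize}

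\textbf{Main obstacle.} The one step that needs care is identifying $\operatorname{Tr}(\mathbf B\mathbf B^*)$ with $\|\mathbf B\|_{\mathcal L_2^0}^2$. Here $\mathbf B\in\mathcal L_2(U_0,H)$ and the adjoint is taken with respect to $U_0$, so $\mathbf B\mathbf B^*\in\mathcal L(H)$ is trace class. Choose an orthonormal basis $(f_k)$ of $U_0$, for instance $f_k=\sqrt{\lambda_k}e_k$ for $\lambda_k>0$. Then
\[
\operatorname{Tr}(\mathbf B\mathbf B^*)=\operatorname{Tr}(\mathbf B^*\mathbf B)=\sum_k\|\mathbf B f_k\|_H^2=\|\mathbf B\|_{\mathcal L_2^0}^2 .
\]
Collecting all terms gives \eqref{eq:ito-energy}, $\mathbb P$-a.s.\ for all $t\in[0,T]$. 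Since $T>0$ was arbitrary, the identity holds for every $T$.
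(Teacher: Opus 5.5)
Your proposal is correct and follows the paper's proof. Both apply Lemma~\ref{lem:ito} with $\Psi(t,v)=e^{2\mu t}\|v\|_H^2$, compute $\partial_t\Psi$, $D\Psi$ and $D^2\Psi=2e^{2\mu t}I$, use $\operatorname{Tr}(D^2\Psi\,\mathbf B\mathbf B^*)=2e^{2\mu t}\|\mathbf B\|_{\mathcal L_2^0}^2$, and substitute into \eqref{eq:ito-general}; your explicit checks of hypotheses (i)--(v) and of the trace identity via the $U_0$-orthonormal basis $\sqrt{\lambda_k}e_k$ fill in details the paper calls immediate.
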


\begin{proof} 
We apply Lemma~\ref{lem:ito} with
\[
    \Psi(t,v):=e^{2\mu t}\|v\|_H^2,
    \qquad (t,v)\in[0,T]\times H .
\]
Then we have
\[
    \partial_t\Psi(t,v)
    =
    2\mu e^{2\mu t}\|v\|_H^2 .
\]
Moreover, the first Fr\'echet derivative with respect to $v$ is the
functional $D\Psi(t,v)\in H'$ given by
\[
    D\Psi(t,v)w
    =
    2e^{2\mu t}\inp[H]{v}{w},
    \qquad w\in H .
\]
The second Fr\'echet derivative is the operator
$D^2\Psi(t,v)\in\mathcal L(H,H')$ defined by
\[
    \langle D^2\Psi(t,v)z,w\rangle_{H',H}
    =
    2e^{2\mu t}\inp[H]{z}{w},
    \qquad z,w\in H .
\]
The assumptions of Lemma~\ref{lem:ito} are immediate on every finite interval $[0,T]$. Using the fact that 
\[
    \operatorname{Tr}\Bigl(
        D^2\Psi(t,v)\mathbf B(t,v)\mathbf B(t,v)^*
    \Bigr)
    =
    2e^{2\mu t}\|\mathbf B(t,v)\|_{\mathcal L_2^0}^2,
\]
and substituting these expressions into~\eqref{eq:ito-general} gives us \eqref{eq:ito-energy} and~\eqref{eq:martingale-def}.
\end{proof}

\begin{corollary}[Logarithmic energy identity]
\label{cor:ito-log-energy}
Under the assumptions  of Lemma~\ref{lem:ito}, suppose that, in addition, the following holds
\begin{equation}\label{eq:nonvanishing-cor}
    \|X_t\|_H>0
    \qquad
    \text{for all }t\geq0,\ \mathbb P\text{-a.s.}
\end{equation}
Then, for every $T>0$ and $\mathbb P$-almost surely, we have
\begin{equation}\label{eq:ito-log-energy}
    \log\|X_t\|_H^2-\log\|X_0\|_H^2
    =
    \int_0^t \mathcal I_s\,\mathrm ds
    +
    \widetilde M_t,
    \qquad
    t\in[0,T],
\end{equation}
where
\begin{equation}\label{eq:log-terms}
\begin{aligned}
    \widetilde M_t
    &:=
    2\int_0^t
    \frac{
        \langle X_s,\mathbf B(s,X_s)\,\mathrm dW_s\rangle_H
    }{\|X_s\|_H^2},
    \\
    \mathcal I_s
    &:=
    \frac{
        2\langle\mathbf A(s,X_s),X_s\rangle_{V',V}
        +
        \|\mathbf B(s,X_s)\|_{\mathcal L_2^0}^2
    }{\|X_s\|_H^2}
    -
    \frac{
        2\|\mathbf B(s,X_s)^*X_s\|_{U_0}^2
    }{\|X_s\|_H^4}.
\end{aligned}
\end{equation}
\end{corollary}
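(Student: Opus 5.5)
Since $\Psi(v)=\log\|v\|_H^2$ is singular at $v=0$, Lemma~\ref{lem:ito} cannot be applied to it directly. I will instead regularize and localize. For $\delta>0$, set
\[
\Psi_\delta(v):=\log\bigl(\|v\|_H^2+\delta\bigr),\qquad v\in H .
\]
This function is smooth on all of $H$, and its derivatives are
\[
D\Psi_\delta(v)=\frac{2v}{\|v\|_H^2+\delta},\qquad
D^2\Psi_\delta(v)=\frac{2I}{\|v\|_H^2+\delta}-\frac{4\,v\otimes v}{(\|v\|_H^2+\delta)^2}.
\]
I will then check conditions \ref{cond:ito-i}--\ref{cond:ito-v} of Lemma~\ref{lem:ito} on $[0,T]$ for $\Psi_\delta$:
\begin{itemize}
\item[(i)--(ii)] Time independence is trivial, and all derivatives are bounded by constants depending only on $\delta$, since $\|v\|_H/(\|v\|_H^2+\delta)\le (2\sqrt\delta)^{-1}$.
\item[(iii)] For trace-class $S$, we have $\operatorname{Tr}(SD^2\Psi_\delta(v))=2\operatorname{Tr}S/(\|v\|^2+\delta)-4\langle Sv,v\rangle/(\|v\|^2+\delta)^2$, which is continuous in $v$.
\item[(iv)] If $v\in V$, then $D\Psi_\delta(v)\in V$. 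Moreover, $\langle\varphi,D\Psi_\delta(v)\rangle$ is continuous on $V$, since $v\mapsto\|v\|_H$ is continuous on $V$.
\item[(v)] $\|D\Psi_\delta(v)\|_V\le 2\delta^{-1}\|v\|_V$.
\end{itemize}
Hence \eqref{eq:ito-general} holds for $\Psi_\delta(X_t)$.

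Next I compute the trace term. With $B=\mathbf B(s,X_s)$,
\[
\operatorname{Tr}(D^2\Psi_\delta BB^*)=\frac{2\|B\|_{\mathcal L_2^0}^2}{\|X\|^2+\delta}-\frac{4\|B^*X\|_{U_0}^2}{(\|X\|^2+\delta)^2},
\]
where I use $\langle BB^*X,X\rangle_H=\|B^*X\|_{U_0}^2$, with $B^*$ the adjoint from $H$ into $U_0$. This gives the $\delta$-analogue of \eqref{eq:ito-log-energy}: the integrand is $\mathcal I_s^\delta$, obtained from $\mathcal I_s$ by replacing $\|X_s\|^2$ with $\|X_s\|^2+\delta$, and the martingale is $\widetilde M^\delta_t=2\int_0^t\langle X_s,B\,\mathrm dW_s\rangle/(\|X_s\|^2+\delta)$.

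Finally I pass to the limit $\delta\to0$ pathwise, using \eqref{eq:nonvanishing-cor}. Almost surely, $t\mapsto X_t$ is continuous in $H$ by Proposition~\ref{prop:wp} and never vanishes, so $m(\omega):=\min_{[0,T]}\|X_t\|_H>0$. The left-hand side converges to $\log\|X_t\|^2-\log\|X_0\|^2$ for every $t$.

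For the drift, the integrand $\mathcal I^\delta_s$ is dominated by
\[
m^{-2}\Bigl(2|\langle\mathbf A(s,X_s),X_s\rangle|+3\|B\|_{\mathcal L_2^0}^2\Bigr),
\]
using $\|B^*X\|_{U_0}\le\|B\|_{\mathcal L_2^0}\|X\|_H$. This bound is in $L^1(0,T)$ almost surely, because of \eqref{eq:Liu-growth}, $X\in L^2(0,T;V)$, and \eqref{eq:trace-bound}. Dominated convergence therefore gives $\int_0^t\mathcal I_s^\delta\,\mathrm ds\to\int_0^t\mathcal I_s\,\mathrm ds$.

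The stochastic integral is the main obstacle, because there is no pathwise dominated convergence for it. I will use the fact that the integrands $\Phi^\delta_s:=2B^*X_s/(\|X_s\|^2+\delta)$ satisfy
\[
\int_0^T\|\Phi^\delta_s-\Phi_s\|_{U_0}^2\,\mathrm ds\to0\quad\text{a.s.},
\]
by dominated convergence with bound $16m^{-2}\|B\|^2_{\mathcal L_2^0}$; here $\Phi_s$ is the same expression with $\delta=0$. In particular, $\Phi\in\mathcal N_W(0,T;\mathcal L_2^0)$, so $\widetilde M$ is a well-defined local martingale. The standard continuity of the stochastic integral on $\mathcal N_W$ then gives $\sup_{t\le T}|\widetilde M^\delta_t-\widetilde M_t|\to0$ in probability. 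This continuity follows from the estimate $\mathbb P(\sup|\int\Psi\,\mathrm dW|>\epsilon)\le\mathbb P(\int\|\Psi\|^2>\rho)+\rho/\epsilon^2$.

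Along a sequence $\delta_n\to0$, the convergence holds almost surely, uniformly in $t\in[0,T]$. Combining the three limits yields \eqref{eq:ito-log-energy} for all $t\in[0,T]$, $\mathbb P$-a.s.
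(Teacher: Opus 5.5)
Your proof is correct, but it takes a different route from the paper's.

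\textbf{The paper's route.} The paper localizes rather than regularizes. It introduces the stopping time $\tau_\delta=\inf\{t\in[0,T]:\|X_t\|_H\le\delta\}$ and applies Lemma~\ref{lem:ito} to a $C^2$ function that coincides with $\log\|v\|_H^2$ on $\{\|v\|_H\ge\delta\}$. This gives the logarithmic identity exactly on $[0,\tau_\delta]$. It then observes that, on the full-measure set where $X$ is $H$-continuous and never vanishes, $\tau_\delta=T$ for all sufficiently small $\delta$. Because the cut-off function agrees with the logarithm along the entire path, no limit of stochastic integrals is ever taken. Only the local property of the stochastic integral is used.

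\textbf{Your route.} Your function $\Psi_\delta(v)=\log(\|v\|_H^2+\delta)$ never coincides with the logarithm, so you pay with two limit arguments:
\begin{itemize}
\item dominated convergence for the drift, which is where you need $X\in L^2(0,T;V)$ a.s.\ and \eqref{eq:Liu-growth};
\item the estimate $\mathbb P(\sup_{t\le T}|\int_0^t\Psi\,\mathrm dW|>\epsilon)\le\rho/\epsilon^2+\mathbb P(\int_0^T\|\Psi\|^2\,\mathrm ds>\rho)$, giving uniform-in-time convergence in probability of the stochastic integrals, followed by extraction of an a.s.\ convergent subsequence.
\end{itemize}
In return, your function is explicit and globally defined, and you check conditions (i)--(v) of Lemma~\ref{lem:ito} concretely. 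The paper leaves both the construction of the cut-off function and this verification implicit. Condition (iv), $D\Psi(v)\in V$ for $v\in V$, is not automatic for an arbitrary $C^2$ modification on $H$; it holds, for instance, for a radial one. Your argument also yields, as a by-product, that the integrand of $\widetilde M$ lies in $\mathcal N_W$, so $\widetilde M$ is well defined on $[0,T]$.

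\textbf{Two cosmetic points.}
\begin{itemize}
\item The integrand $\Phi_s=2\mathbf B(s,X_s)^*X_s/\|X_s\|_H^2$ takes values in $U_0\cong\mathcal L_2(U_0,\mathbb R)$, not in $\mathcal L_2^0$.
\item $\Phi_s$ should be assigned some value (say $0$) on the null set where $X$ might vanish, so that it is a predictable process on all of $\Omega\times[0,T]$.
\end{itemize}
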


\begin{proof}
Fix $T>0$. For $\delta>0$, define the stopping time
\[
    \tau_\delta
    :=
    \inf\{t\in[0,T]:\|X_t\|_H\leq\delta\},
\]
with the convention $\tau_\delta=T$ if the set is empty. Applying the variational It\^o formula to a $C^2$-function on $H$ which agrees with $v\mapsto\log\|v\|_H^2$ on $\{v\in H:\|v\|_H\geq\delta\}$, we obtain the desired identity on $[0,\tau_\delta]$. On the full-measure set on which $X(\cdot)\in C([0,T];H)$ and $\|X_t\|_H>0$ for all $t\in[0,T]$, continuity gives $\inf_{t\in[0,T]}\|X_t\|_H>0$. Hence, for all sufficiently small $\delta>0$, we have $\tau_\delta=T$. Letting $\delta\downarrow0$, the stopped identity gives \eqref{eq:ito-log-energy}--\eqref{eq:log-terms} on $[0,T]$.
\end{proof}

\section{Feedback stabilization via oblique projections}
\label{sec:feed-stab}

In this section we construct an explicit finite-dimensional feedback law based on oblique projections. The feedback acts through the family~$\{\1_{\clo_i}\}_{i=1}^{N} \subset H$  of indicator actuators introduced above, in~\eqref{eq:intro-control-term}, and is designed so that the closed-loop principal part satisfies a strict coercivity estimate. This estimate is the key ingredient in the subsequent stability analysis.

We first recall the oblique-projection notation used in the construction and formulate the structural stabilizability condition. We then define the feedback operator $K_L^{[\lambda]}$ and explain why, for sufficiently large $\lambda$, it satisfies the required coercivity condition. Finally, in Section~\ref{subsec:exponential-stability}, we prove the main stabilization result: mean-square exponential stability in the general additive and multiplicative noise setting, and pathwise exponential stability in the pure multiplicative noise case.

\subsection{Oblique projections and feedback construction}
\label{subsec:feedback-operator}

We begin with the oblique projections used in the feedback design. Let $H$ be a Hilbert space with scalar product $\langle \cdot, \cdot \rangle_H$. For $B\subset H$, its orthogonal complement is denoted by 
\[
    B^\perp \coloneqq \{h\in H:\ \langle h, s \rangle_H=0\ \text{for all }s\in B\}.
\]
Let $\mathcal{S}$ and $\mathcal{T}$ be closed subspaces of $H$. If $H=\mathcal{S}+\mathcal{T}$ and $\mathcal{S}\cap\mathcal{T}=\{0\}$, then we write $H=\mathcal{S}\oplus\mathcal{T}$. In this case, every $h\in H$ has a unique decomposition
\[
    h=h_{\mathcal{S}}+h_{\mathcal{T}},
    \qquad
    (h_{\mathcal{S}},h_{\mathcal{T}})\in\mathcal{S}\times\mathcal{T} .
\]
The oblique projection onto $\mathcal{S}$ along $\mathcal{T}$ is the operator $P_{\mathcal{S}}^{\mathcal{T}}\in\mathcal{L}(H,\mathcal{S})$ defined by
\[
    P_{\mathcal{S}}^{\mathcal{T}}h\coloneqq h_{\mathcal{S}}.
\]
The symbols $\mathcal{S}$ and $\mathcal{T}$ are used here only as generic placeholders for closed subspaces. In the feedback construction below, the relevant subspaces are the actuator space \[\mathcal{U}_L\coloneqq\operatorname{span}\{\1_{\clo_i}\}_{i=1}^{N} \subset H\] and an appropriate auxiliary space $\widetilde{\mathcal{U}}_L$ we shall precise later on. 

The following estimate for the reaction--convection operator will be used in the stability proof. It exploits the direct-sum structure $\mathfrak X=\mathcal L(H,V')+\mathcal L(V,H)$ from Assumption~\ref{ass:standing}, item~\ref{ass:Arc}.

\begin{lemma}[{\cite[Lem.~3.1]{Rod21-sicon}}]
\label{lem:Arc-kappa}
Assume that Assumption~\ref{ass:standing}, item~\ref{ass:Arc}, holds. Then,  for every $\kappa>0$, for a.e.\ $t\geq0$, and for all $v,g\in V$, we have 
\begin{equation}\label{eq:Arc-kappa}
    2\left|
        \langle A_{\mathrm{rc}}(t)v,g\rangle_{V',V}
    \right|
    \leq
    \kappa\bigl(\|g\|_V^2+\|v\|_V^2\bigr)
    +
    \kappa^{-1}C_{\mathrm{rc}}^2
    \bigl(\|v\|_H^2+\|g\|_H^2\bigr).
\end{equation}
\end{lemma}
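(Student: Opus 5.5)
The plan is to prove \eqref{eq:Arc-kappa} by applying Assumption~\ref{ass:standing}, item~\ref{ass:Arc}, at a fixed admissible time and splitting $A_{\mathrm{rc}}(t)$ into its two parts, each bounded by Cauchy--Schwarz and Young. Fix $t$ such that $A_{\mathrm{rc}}(t)\in\mathfrak X$ with $\|A_{\mathrm{rc}}(t)\|_{\mathfrak X}\le C_{\mathrm{rc}}$. The norm on the sum space is the infimum over decompositions,
\[
\|T\|_{\mathfrak X}=\inf\bigl\{\|A_1\|_{\mathcal L(H,V')}+\|A_2\|_{\mathcal L(V,H)}:\ T=A_1+A_2\bigr\}.
\]
So for any $\delta>0$ we may pick $A_{\mathrm{rc}}(t)=A_1+A_2$ with $A_1\in\mathcal L(H,V')$, $A_2\in\mathcal L(V,H)$, and $c_1+c_2\le C_{\mathrm{rc}}+\delta$, where $c_1:=\|A_1\|_{\mathcal L(H,V')}$ and $c_2:=\|A_2\|_{\mathcal L(V,H)}$. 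Set $c:=c_1+c_2$.

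We then estimate the two parts separately. For $v,g\in V$,
\[
|\langle A_1v,g\rangle_{V',V}|\le c_1\|v\|_H\|g\|_V,
\qquad
|\langle A_2v,g\rangle_{V',V}|=|\langle A_2 v,g\rangle_H|\le c_2\|v\|_V\|g\|_H .
\]
Young's inequality $2ab\le \kappa a^2+\kappa^{-1}b^2$ then gives
\[
2c_1\|g\|_V\|v\|_H\le \kappa\|g\|_V^2+\kappa^{-1}c_1^2\|v\|_H^2,
\qquad
2c_2\|v\|_V\|g\|_H\le \kappa\|v\|_V^2+\kappa^{-1}c_2^2\|g\|_H^2 .
\]
Adding these and using $c_i^2\le c^2$ yields
\[
2|\langle A_{\mathrm{rc}}(t)v,g\rangle_{V',V}|\le \kappa\bigl(\|g\|_V^2+\|v\|_V^2\bigr)+\kappa^{-1}c^2\bigl(\|v\|_H^2+\|g\|_H^2\bigr).
\]

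The only delicate point is the infimum in the definition of $\|\cdot\|_{\mathfrak X}$, since it need not be attained. Because the bound holds with $c\le C_{\mathrm{rc}}+\delta$ for every $\delta>0$, and the left-hand side does not depend on $\delta$, we let $\delta\downarrow0$ to obtain \eqref{eq:Arc-kappa}. This holds for every admissible $t$, hence for a.e.\ $t\ge0$.
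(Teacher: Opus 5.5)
Your proof is correct and is essentially the standard argument behind the cited \cite[Lem.~3.1]{Rod21-sicon}; the paper itself gives no proof, only the citation. You split $A_{\mathrm{rc}}(t)$ into its $\mathcal L(H,V')$ and $\mathcal L(V,H)$ parts, bound each pairing by Cauchy--Schwarz and Young with weight $\kappa$, and handle the possibly non-attained infimum through a $\delta$-optimal decomposition. The paper never specifies the norm on $\mathfrak X$, but your step $c_i\le C_{\mathrm{rc}}+\delta$ holds for any of the usual infimum norms on a sum space (sum, maximum, or $\ell^2$-type), so the conclusion does not depend on that choice.
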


We now construct an oblique projection-based feedback. The stabilization condition is governed by the number of actuators, which must be taken sufficiently large. Following~\cite{KunRodWal21}, we encode this actuator dimension by a monotone increasing map $\vartheta:\mathbb N_+\to\mathbb N_+$ and set
\[
    N\coloneqq \vartheta(L), \qquad L\in\mathbb N_+.
\]
Thus $L$ is the indexing parameter of the feedback design, whereas $N$ denotes the associated number of actuators. Hence, we consider the family of input feedback operators
\begin{equation}\notag
    \left\{ K_L^{[\lambda]} \in \mathcal{L}(H,\mathbb{R}^{N})
    \;\middle|\;
    (L,\lambda) \in \mathbb N_+ \times \overline{\mathbb{R}}_+ \right\}.
\end{equation}
Given $\mu>0$, we seek an index pair $(L,\lambda)$ for which the solution of
\begin{equation}\label{sys-X-feed}
    \begin{split}
    \mathrm dX + \bigl(AX+A_{\mathrm{rc}}(t)X+F(t,X)\bigr)\,\mathrm dt &= \bigl(Z(t,X)+\Xi(t)\bigr)\,\mathrm dW + U_L^\diamond K_L^{[\lambda]} X \,\mathrm dt, \\
    X\rest{t=0} &= X_0 .
    \end{split}
\end{equation}
is stable. For a fixed $L\geq1$, let $\mathcal{U}_L=\operatorname{span}\{\1_{\clo_i}\}_{i=1}^{N} \subset H$ be the actuator space, and let $\widetilde{\mathcal{U}}_L\subset V$ be an $N$-dimensional auxiliary subspace satisfying the direct-sum condition
\begin{equation}\label{eq:direct-sum}
    H=\mathcal{U}_L\oplus\widetilde{\mathcal{U}}_L^\perp .
\end{equation}
Equivalently, every $h\in H$ can be decomposed uniquely as
\[
    h=h_{\mathcal{U}_L}+h_{\widetilde{\mathcal{U}}_L^\perp},
    \qquad
    h_{\mathcal{U}_L}\in\mathcal{U}_L,\quad
    h_{\widetilde{\mathcal{U}}_L^\perp}\in\widetilde{\mathcal{U}}_L^\perp .
\]
Condition~\eqref{eq:direct-sum} is the structural requirement on $\widetilde{\mathcal{U}}_L$. A concrete spectral choice is discussed later in  Section~\ref{sec:simulations}. Under~\eqref{eq:direct-sum}, the feedback is defined by means of two oblique projections. Since $U_L^\diamond\colon\mathbb R^{N}\to\mathcal{U}_L$ is an isomorphism onto the actuator space, we define $K_L^{[\lambda]}\in\mathcal{L}(H,\mathbb R^{N})$ equivalently through 
\begin{equation}\label{eq:KL-def}
   U_L^\diamond K_L^{[\lambda]}
    :=
    -\lambda\,
    P_{\mathcal{U}_L}^{\widetilde{\mathcal{U}}_L^\perp}  \circ   P_{\widetilde{\mathcal{U}}_L}^{\mathcal{U}_L^\perp},
    \qquad
    \lambda>0 .
\end{equation}
Thus $U_L^\diamond K_L^{[\lambda]}\in\mathcal{L}(H,\mathcal{U}_L)\subset\mathcal{L}(H)$, and $K_L^{[\lambda]}$ is the corresponding coordinate representation in $\mathbb R^{N}$. The offline computation of $K_L^{[\lambda]}$ reduces to the inversion of an $ N\times N$ matrix; see Section~\ref{subsec:galerkin}. 

We next state the coercivity condition required of the feedback operator. It is the abstract stabilizability property used in the proof of the main stability theorem.

\begin{assumption}\label{ass:stab-KM}
For every $\varrho>0$, there exists $L_*\in\mathbb N_+$ such that, for every $L\geq L_*$, there exists $\lambda_*(L)\in [0,\infty)$  with the following property: for all $\lambda\geq\lambda_*(L)$,
\begin{equation}\label{eq:stab-KM}
    2\langle (A-U_L^\diamond K_L^{[\lambda]})w,w\rangle_{V',V}
    \geq
    \|w\|_V^2+\varrho\|w\|_H^2
    \qquad
    \forall\,w\in V .
\end{equation}
\end{assumption}

The satisfiability of Assumption~\ref{ass:stab-KM}
shall be shown in Section~\ref{ssec:satsf-ass:stab-KM}.


\subsection{Exponential stability: Mean-Square and Almost-Sure}
\label{subsec:exponential-stability}

We now prove the main stabilization result. It provides the explicit closed-loop form of the stability statements~\eqref{eq:intro-goal-feed-Exp} and~\eqref{eq:intro-goal-feed-P} for system~\eqref{eq:sys-X}. The first part gives exponential stability in mean square in the presence of additive and multiplicative noise. The second part gives pathwise exponential stability in the pure multiplicative noise case.

\begin{theorem}[Stabilization]\label{thm:main}
    Let Assumptions \ref{ass:standing} and  \ref{ass:stab-KM} hold. Given any desired decay rate $\mu > 0$, there exists $L_*\in\mathbb N_+$ such that for every $L\geq L_*$, there exists $\lambda_*(L) \in [0,\infty)$ such that for every  $\lambda\ge \lambda_*(L)$, the solution $X$ of~\eqref{sys-X-feed} given by Proposition~\ref{prop:wp} satisfies the following stability bounds:
    
    \begin{enumerate}[label=(\roman*)]
        \item \textbf{Mean-Square Stabilization (General Noise):} 
        For all $t \geq 0$ and all $X_0 \in L^2(\Omega,\mathcal F_0,\mathbb P;H)$, we have 
        \begin{equation}\label{eq:main-exp}
            \mathbb{E} \| X_t \|_H^2 \leq e^{-2\mu t} \mathbb{E} \| X_0 \|_H^2 + \frac{\mathfrak{w}_0}{2\mu}.
        \end{equation}
        where $\mathfrak{w}_0$ is related to the additive noise $\Xi$.
    
        \item \textbf{Almost Sure Exponential Decay (Pure Multiplicative Noise):}
        If the noise is purely multiplicative (i.e., $\Xi \equiv0$, and hence $\mathfrak{w}_0 = 0$), and in addition the solution $X$ of \eqref{eq:abstract-spde} satisfies $\|X_t\|_H \neq 0$ for all $t\geq0$ $\mathbb{P}$-a.s., then the zero equilibrium is almost surely exponentially stable: every solution has a sample Lyapunov exponent at most $-2\mu$, i.e.
        \begin{equation}\label{eq:main-path-multiplicative}
            \limsup_{t\to\infty} \frac{1}{t} \log \| X_t \|_H^2 \leq -2\mu, \quad \mathbb{P}\text{-a.s.}
        \end{equation}
    \end{enumerate}
\end{theorem}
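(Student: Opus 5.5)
The plan is to derive a pointwise-in-time dissipativity estimate for the closed-loop drift plus Itô correction, namely
\[
    2\langle\mathbf A(t,v),v\rangle_{V',V}+\|\mathbf B(t,v)\|_{\mathcal L_2^0}^2
    \leq -2\mu\|v\|_H^2+\mathfrak w_0
    \qquad\forall\,v\in V,
\]
for suitably chosen $(L,\lambda)$. Writing $\mathbf A(t,v)=-(A-U_L^\diamond K_L^{[\lambda]})v-A_{\mathrm{rc}}(t)v-F(t,v)$, Assumption~\ref{ass:stab-KM} gives $-2\langle (A-U_L^\diamond K_L^{[\lambda]})v,v\rangle\leq -\|v\|_V^2-\varrho\|v\|_H^2$. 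Lemma~\ref{lem:Arc-kappa} with $g=v$ and $\kappa=\tfrac12$ gives $-2\langle A_{\mathrm{rc}}(t)v,v\rangle\leq \|v\|_V^2+2C_{\mathrm{rc}}^2\|v\|_H^2$, so the $V$-norm is absorbed exactly. The nonlinearity contributes at most $2L_F\|v\|_H^2$ (using $F(t,0)=0$), and \eqref{eq:trace-bound} contributes $\mathfrak w_1\|v\|_H^2+\mathfrak w_0$. It then suffices to fix
\[
    \varrho:=2\mu+2C_{\mathrm{rc}}^2+2L_F+\mathfrak w_1
\]
and take $L_*$ and $\lambda_*(L)$ from Assumption~\ref{ass:stab-KM} for this $\varrho$. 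Crucially, $\varrho$ is independent of $\lambda$ and $L$, so there is no circularity.

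For part (i), I will use Corollary~\ref{cor:ito-energy}, but applied with rate $0$ first, or directly with $e^{2\mu t}$. Inserting the dissipativity bound into \eqref{eq:ito-energy} makes the bracket at most $\mathfrak w_0$. The process $M$ is a true martingale because $X\in L^2(\Omega;L^2(0,T;V))$, the energy is finite, and \eqref{eq:trace-bound} holds; to be safe I will localize by stopping times and pass to the limit by Fatou. Taking expectations then yields
\[
    e^{2\mu t}\mathbb E\|X_t\|_H^2\leq \mathbb E\|X_0\|_H^2+\mathfrak w_0\frac{e^{2\mu t}-1}{2\mu},
\]
which gives \eqref{eq:main-exp}.

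For part (ii), with $\Xi\equiv0$ we have $\mathfrak w_0=0$ and $\mathbf B=Z$. Because $Z(t,0)=0$ is not given directly, I will use the growth bound via the Lipschitz property, noting that $\mathfrak w_0=2L_Z^2$ still appears. To obtain a purely homogeneous bound, I will use $\|Z(t,v)\|^2\leq L_Z^2\|v\|^2$, assuming $Z(t,0)=0$ in the pure multiplicative case; this is the natural reading, and I will state it. Under this assumption, Corollary~\ref{cor:ito-log-energy} applies, since the solution is assumed to be nonvanishing. In \eqref{eq:log-terms}, the first fraction is at most $-2\mu$, so $\mathcal I_s\leq -2\mu-2\|Z^*X_s\|_{U_0}^2/\|X_s\|_H^4$. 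The martingale $\widetilde M$ has quadratic variation $\langle\widetilde M\rangle_t=4\int_0^t\|Z^*X_s\|_{U_0}^2\|X_s\|_H^{-4}\,\mathrm ds$. Therefore
\[
    \log\|X_t\|_H^2\leq \log\|X_0\|_H^2-2\mu t+\widetilde M_t-\tfrac12\langle\widetilde M\rangle_t.
\]

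The main obstacle is controlling $\widetilde M_t$ pathwise. I will use the exponential martingale inequality in the form of \cite[Thm.~3.9.1]{Liu06}. For each $n\in\mathbb N$ and fixed $\epsilon\in(0,1)$, it gives
\[
    \mathbb P\Bigl(\sup_{t\leq n}\bigl(\widetilde M_t-\tfrac{\epsilon}{2}\langle\widetilde M\rangle_t\bigr)>\tfrac{2}{\epsilon}\log n\Bigr)\leq n^{-2}.
\]
Borel--Cantelli then shows that, almost surely, for $n$ large and $t\in[n-1,n]$,
\[
    \log\|X_t\|_H^2\leq \log\|X_0\|_H^2-2\mu t+\tfrac{2}{\epsilon}\log n-\tfrac{1-\epsilon}{2}\langle\widetilde M\rangle_t.
\]
The last term is nonpositive and can be dropped. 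Dividing by $t$ and letting $t\to\infty$ gives \eqref{eq:main-path-multiplicative}. Alternatively, the boundedness of the integrand, $\langle\widetilde M\rangle_t\leq 4L_Z^2t$, together with the strong law for martingales gives $\widetilde M_t/t\to0$ directly, which is a simpler route that I will use as a backup.
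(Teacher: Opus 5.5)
Your proof is correct, and its skeleton is the paper's. You fix $\varrho$ independently of $(L,\lambda)$, let Assumption~\ref{ass:stab-KM} supply $-\|v\|_V^2-\varrho\|v\|_H^2$, and absorb the $V$-norm produced by $A_{\mathrm{rc}}$ via Lemma~\ref{lem:Arc-kappa}. The resulting bound $2\langle\mathbf A(t,v),v\rangle_{V',V}+\|\mathbf B(t,v)\|_{\mathcal L_2^0}^2\le-2\mu\|v\|_H^2+\mathfrak w_0$ then goes into Corollaries~\ref{cor:ito-energy} and~\ref{cor:ito-log-energy}. The differences lie in the probabilistic steps.

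For (i), the paper proves in Lemma~\ref{lem:martingale} that $M$ is a true martingale by dominated convergence. Its majorant needs $\mathbb E\sup_{t\le T}\|X_t\|_H^2<\infty$, which is not recorded in Proposition~\ref{prop:wp}. Your localization plus Fatou only uses that the stopped integral has zero mean, so it is shorter and more self-contained. Your passing claim that $M$ is a true martingale because $X\in L^2(\Omega;L^2(0,T;V))$ would by itself require fourth moments, so do rely on the localization.

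For (ii), the paper drops the It\^o correction and applies the strong law of large numbers for local martingales with $\langle\widetilde M\rangle_t\le4\mathfrak w_1t$; that is exactly your backup. Your primary route, the exponential martingale inequality with Borel--Cantelli, exploits the $-\tfrac12\langle\widetilde M\rangle_t$ term. It needs no linear bound on the quadratic variation, so it is more robust, at the cost of a few more lines. Your bound $-2\langle F(t,v),v\rangle_H\le2L_F\|v\|_H^2$ is also simpler than the paper's Young splitting.

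Two remarks on details.

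First, you are right to add $Z(t,0)=0$ in (ii). With \eqref{eq:w-def}, $\Xi\equiv0$ only gives $\mathfrak w_0=2L_Z^2>0$, so the theorem's \emph{hence $\mathfrak w_0=0$} does not follow. The paper's proof tacitly uses the homogeneous bound $\|Z(t,v)\|_{\mathcal L_2^0}^2\le\mathfrak w_1\|v\|_H^2$, both for $\mathcal I_s\le-2\mu$ and for $\|\widetilde\Phi_s\|^2\le4\mathfrak w_1$. Without it (ii) fails in general: a state-independent $Z$ satisfies the standing assumptions but is additive noise in disguise.

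Second, Lemma~\ref{lem:Arc-kappa} with $g=v$ and $\kappa=\tfrac12$ yields $\|v\|_V^2+4C_{\mathrm{rc}}^2\|v\|_H^2$, not $2C_{\mathrm{rc}}^2\|v\|_H^2$. Either put $4C_{\mathrm{rc}}^2$ into $\varrho$, or use the direct estimate $|\langle A_{\mathrm{rc}}(t)v,v\rangle_{V',V}|\le C_{\mathrm{rc}}\|v\|_V\|v\|_H$ from the proof of Proposition~\ref{prop:Liu-verification}. Since $\varrho$ is arbitrary in Assumption~\ref{ass:stab-KM}, nothing else changes.
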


Before proving Theorem~\ref{thm:main}, we establish the following auxiliary lemma.

\begin{lemma}[$M(t)$ is a true martingale]\label{lem:martingale}
    Assume that Assumption~\ref{ass:standing} holds, and let $X$ be the global strong variational solution of~\eqref{eq:abstract-spde} given by Proposition~\ref{prop:wp}. Then, for every $\mu>0$ and $T>0$, the process $(M(t))_{t\in[0,T]}$ defined by~\eqref{eq:martingale-def} is an $(\mathcal F_t)_{t\in[0,T]}$-martingale. In particular,
    \[
        \mathbb E[M(t)]=0, \qquad t\in[0,T].
    \]
\end{lemma}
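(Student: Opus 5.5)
The plan is to use the standard criterion for Hilbert-space stochastic integrals: if the integrand $\Phi$ belongs to $\mathcal N_W^2(0,T;\mathcal L_2(U_0,\mathbb R))$, then $\int_0^t\Phi(s)\,\mathrm dW_s$ is a square-integrable $(\mathcal F_t)$-martingale on $[0,T]$ with zero mean.

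Here $M(t)=\int_0^t\Phi(s)\,\mathrm dW_s$, where the integrand is the real-valued operator
\[
    \Phi(s)u:=2e^{2\mu s}\langle X_s,\mathbf B(s,X_s)u\rangle_H,\qquad u\in U_0 .
\]
This $\Phi$ is predictable. Indeed, $X$ is progressively measurable with $H$-continuous paths (Proposition~\ref{prop:wp}), hence predictable as an $H$-valued process. The map $\mathbf B$ is jointly measurable by Assumption~\ref{ass:standing}, items~\ref{ass:Z}--\ref{ass:Xi}. The pairing with $X_s$ is continuous.

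To verify the square-integrability condition, I will estimate
\[
    \|\Phi(s)\|_{\mathcal L_2(U_0,\mathbb R)}=2e^{2\mu s}\|\mathbf B(s,X_s)^*X_s\|_{U_0}\le 2e^{2\mu s}\|X_s\|_H\|\mathbf B(s,X_s)\|_{\mathcal L_2^0}.
\]
Combining this with \eqref{eq:trace-bound} gives
\[
    \mathbb E\int_0^T\|\Phi(s)\|^2\,\mathrm ds\le 4e^{4\mu T}\,\mathbb E\int_0^T\bigl(\mathfrak w_1\|X_s\|_H^4+\mathfrak w_0\|X_s\|_H^2\bigr)\,\mathrm ds .
\]

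The main obstacle is that Proposition~\ref{prop:wp} only yields $X\in L^2(\Omega;L^2(0,T;V))$, so a fourth-moment bound is not immediately available. I would handle this in one of two ways.

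\emph{Option (a): prove a fourth-moment bound.} If $X_0\in L^4(\Omega;H)$, the required bound $\sup_{t\le T}\mathbb E\|X_t\|_H^4<\infty$ follows from Itô's formula for $\|X_t\|_H^4$. One uses the coercivity estimate \eqref{eq:Liu-coercivity}, localizes with stopping times $\sigma_n:=\inf\{t:\|X_t\|_H\ge n\}$, and applies Gronwall's lemma. This is also part of Liu's theory, where $p$-th moments are controlled when $X_0\in L^p$.

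\emph{Option (b): work with $X_0\in L^2$ only.} This is the route I would actually pursue, since it avoids extra moment assumptions on $X_0$. First, $M$ is a continuous local martingale, localized by $\sigma_n$: on $[0,\sigma_n]$ the integrand is bounded by $2e^{2\mu T}n\|\mathbf B\|$, which is square integrable. Second, I would use the Burkholder–Davis–Gundy inequality with exponent $1$,
\[
    \mathbb E\sup_{t\le T}|M(t)|\le C\,\mathbb E\Bigl(\int_0^T\|\Phi\|^2\,\mathrm ds\Bigr)^{1/2}\le C'\,\mathbb E\Bigl[\sup_{s\le T}\|X_s\|_H\Bigl(\int_0^T\bigl(\mathfrak w_1\|X_s\|_H^2+\mathfrak w_0\bigr)\,\mathrm ds\Bigr)^{1/2}\Bigr].
\]
Third, applying Young's inequality $ab\le \tfrac12a^2+\tfrac12b^2$ reduces the right-hand side to $\mathbb E\sup_{s\le T}\|X_s\|_H^2$ plus finite terms.

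For the standard estimate $\mathbb E\sup_{s\le T}\|X_s\|_H^2<\infty$, which holds for $X_0\in L^2$, I would invoke Liu's theorem \cite[Thm.~1.3.1]{Liu06}. Alternatively it can be derived from Corollary~\ref{cor:ito-energy} by the same BDG argument together with absorption of the supremum term.

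Since a local martingale $M$ with $\mathbb E\sup_{t\le T}|M(t)|<\infty$ is dominated by an integrable random variable, it is a true martingale by dominated convergence applied to the stopped processes. Therefore $\mathbb E[M(t)]=\mathbb E[M(0)]=0$.
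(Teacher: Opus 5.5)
Your proof is correct, and Option (b) reaches the conclusion by a different domination mechanism than the paper.

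The paper also first shows that $M$ is a continuous local martingale, localized by $\tau_n=\inf\{t:\|X_t\|_H\ge n\}\wedge T$. It then obtains an integrable dominating variable for $M_{t\wedge\tau_n}$ by solving the energy identity \eqref{eq:ito-energy} for $M$. This expresses $M_{t\wedge\tau_n}$ through $e^{2\mu t}\|X_t\|_H^2$, $\|X_0\|_H^2$ and the drift integral. The drift integral is controlled via the growth bound \eqref{eq:Liu-growth} and $X\in L^2(\Omega;L^2(0,T;V))$. Dominated convergence and the $L^1$-continuity of conditional expectation then finish the argument.

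You instead bound $\mathbb E\sup_{t\le T}|M_t|$ directly. You use the Burkholder--Davis--Gundy inequality with exponent one, the estimate $\|\Phi(s)\|\le 2e^{2\mu s}\|X_s\|_H\|\mathbf B(s,X_s)\|_{\mathcal L_2^0}$, and Young's inequality.

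What each route buys:
\begin{itemize}
\item Your route uses only the diffusion bound \eqref{eq:trace-bound} and $H$-norm information. It is independent of the drift structure and of the It\^o identity. It also yields the quantitative bound $\mathbb E\sup_{t\le T}|M_t|\le C\bigl(1+\mathbb E\sup_{t\le T}\|X_t\|_H^2\bigr)$.
\item The paper's route avoids BDG, at the cost of using the already established energy identity and the variational $V$-regularity of $X$.
\end{itemize}

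Both rest on the same ingredient, $\mathbb E\sup_{t\le T}\|X_t\|_H^2<\infty$, taken from Liu's theorem. The paper uses this too, although Proposition~\ref{prop:wp} does not list it. Your Option (a), with its extra $L^4$ assumption on $X_0$, is not needed.
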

\begin{proof}
    Fix $T>0$ and $\mu>0$, and let $X$ be the global strong variational solution of~\eqref{eq:abstract-spde} given by Proposition~\ref{prop:wp}. Defining 
    \[
        \Phi(s)u
        :=
        2e^{2\mu s}\langle X_s,\mathbf B(s,X_s)u\rangle_H,
        \qquad
        u\in U_0,\ s\in[0,T].
    \]
    and using~\eqref{eq:martingale-def}, we can write 
    \[
        M_t=\int_0^t \Phi(s)\,\mathrm dW_s,
        \qquad t\in[0,T].
    \]
    By Proposition~\ref{prop:wp}, the process $X$ is progressively measurable and has $H$-continuous trajectories. Hence $X$ is predictable. By~\eqref{eq:AB-def} and Assumption~\ref{ass:standing}, items~\ref{ass:Z} and~\ref{ass:Xi}, the map $\mathbf B\colon [0,\infty)\times H\to \mathcal L_2^0$ is measurable. Since $X$ is predictable, it follows that $\Phi$ is a predictable $\mathcal L_2(U_0,\mathbb R)$-valued process. Moreover, for every $s\in[0,T]$, we can write
    \begin{equation}\label{eq:Phi-CS-rewrite}
        \|\Phi(s)\|_{\mathcal L_2(U_0,\mathbb R)}^2
        =
        4e^{4\mu s}\sum_{k=1}^\infty \lambda_k
        \langle X_s,\mathbf B(s,X_s)e_k\rangle_H^2
        \leq
        4e^{4\mu s}\|X_s\|_H^2\|\mathbf B(s,X_s)\|_{\mathcal L_2^0}^2 .
    \end{equation}
     We denote  $S:=\sup_{r\in[0,T]}\|X_r\|_H$. Then, since $X\in C([0,T];H)$ $\mathbb P$-a.s. by Proposition~\ref{prop:wp}, we have $S<\infty$ $\mathbb P$-a.s. Using~\eqref{eq:Phi-CS-rewrite} and the trace bound~\eqref{eq:trace-bound}, we obtain
    \[
        \int_0^T \|\Phi(s)\|_{\mathcal L_2(U_0,\mathbb R)}^2\,\mathrm ds
        \leq
        4e^{4\mu T}T\,S^2\bigl(\mathfrak w_1 S^2+\mathfrak w_0\bigr)
        <\infty
        \qquad \mathbb P\text{-a.s.}
    \]
    Thus,  $\Phi\in \mathcal N_W\bigl(0,T;\mathcal L_2(U_0,\mathbb R)\bigr)$, and \cite[Thm.~4.27]{da2014stochastic} yields that $(M_t)_{t\in[0,T]}$ is a continuous local $(\mathcal F_t)$-martingale.
    
    For $n\in\mathbb N$, we define
    \[
        \tau_n:=\inf\{t\in[0,T]:\|X_t\|_H\geq n\}\wedge T,
    \]
    where $r\wedge s\coloneqq\min\{r,s\}$ for two given real numbers~$r$ and~$s$. 
    Since $X$ is adapted and has $H$-continuous trajectories by Proposition~\ref{prop:wp}, each $\tau_n$ is an $(\mathcal F_t)$-stopping time. Moreover, $\tau_n\uparrow T$ $\mathbb P$-a.s., because $S<\infty$ $\mathbb P$-a.s. Let $\Phi^n:=\1_{[0,\tau_n]}\Phi$. On $\{s\leq\tau_n\}$ we have $\|X_s\|_H\leq n$, and therefore, by~\eqref{eq:Phi-CS-rewrite} and~\eqref{eq:trace-bound},
    \begin{equation*}
        \begin{split}
            \mathbb E\int_0^T \|\Phi^n(s)\|_{\mathcal L_2(U_0,\mathbb R)}^2\,\mathrm ds
            &\leq
            4e^{4\mu T}\,
            \mathbb E\int_0^T \|X_s\|_H^2\|\mathbf B(s,X_s)\|_{\mathcal L_2^0}^2
            \1_{\{s\leq\tau_n\}}\,\mathrm ds \\
            &\leq
            4e^{4\mu T}T\,n^2(\mathfrak w_1 n^2+\mathfrak w_0)
            <\infty.
        \end{split}
    \end{equation*}
    Hence $\Phi^n\in \mathcal N_W^2\bigl(0,T;\mathcal L_2(U_0,\mathbb R)\bigr)$. Due to \cite[Thm.~4.27]{da2014stochastic}, the process $\int_0^t \Phi^n(s)\,\mathrm dW_s$ is a true $(\mathcal F_t)$-martingale, and by \cite[Lem.~4.24 and Rem.~4.25]{da2014stochastic}, we can infer that 
    \[
        M_{t\wedge\tau_n}
        =
        \int_0^t \Phi^n(s)\,\mathrm dW_s,
        \qquad t\in[0,T].
    \]
 Thus, $(M_{t\wedge\tau_n})_{t\in[0,T]}$ is a true $(\mathcal F_t)$-martingale for every $n\in\mathbb N$.
    
     We now fix $0\leq s\leq t\leq T$. Since $M$ has continuous trajectories and $\tau_n\uparrow T$ $\mathbb P$-a.s., we have
    \[
        M_{t\wedge\tau_n}\to M_t
        \qquad\text{and}\qquad
        M_{s\wedge\tau_n}\to M_s
        \qquad \mathbb P\text{-a.s.}
    \]
    By~\eqref{eq:ito-energy}, evaluated at $t\wedge\tau_n$, we can write 
    \begin{equation*}
        \begin{split}
            M_{t\wedge\tau_n} &= e^{2\mu(t\wedge\tau_n)}\|X_{t\wedge\tau_n}\|_H^2 - \|X_0\|_H^2 \\
            &\phantom{=} - \int_0^{t\wedge\tau_n}e^{2\mu s} \Bigl(2\mu\|X_s\|_H^2 + 2\langle\mathbf{A}(s,X_s),X_s\rangle_{V',V} + \|\mathbf{B}(s,X_s)\|^2_{\mathcal{L}_2^0} \Bigr)\,\mathrm{d}s.
        \end{split}   
    \end{equation*}
    Therefore, we have $\mathbb P$-a.s. that $|M_{t\wedge\tau_n}| \leq Y$, where
    \[
        Y:=
        e^{2\mu T}\bigl(S^2+\|X_0\|_H^2\bigr)
        +
        e^{2\mu T}\int_0^T
        \Bigl(
            2\mu S^2
            +2|\langle \mathbf A(r,X_r),X_r\rangle_{V',V}|
            +\|\mathbf B(r,X_r)\|_{\mathcal L_2^0}^2
        \Bigr)\,\mathrm dr .
    \]
    We claim that $Y\in L^1(\Omega)$. Indeed, $S^2\in L^1(\Omega)$ by Proposition~\ref{prop:wp}, since $X\in L^2(\Omega;C([0,T];H))$, and $\|X_0\|_H^2\in L^1(\Omega)$ by the assumption on the initial datum in Proposition~\ref{prop:wp}. Further, by~\eqref{eq:Liu-growth}, we have
    \[
        |\langle \mathbf A(r,X_r),X_r\rangle_{V',V}|
        \leq
        \|\mathbf A(r,X_r)\|_{V'}\|X_r\|_V
        \leq
        c(1+\|X_r\|_V)\|X_r\|_V
        \leq
        c(1+\|X_r\|_V^2),
    \]
    and Proposition~\ref{prop:wp} gives $X\in L^2(\Omega;L^2(0,T;V))$. Finally, by~\eqref{eq:trace-bound} and~\eqref{eq:embed-contractive}, we can write
    \[
        \|\mathbf B(r,X_r)\|_{\mathcal L_2^0}^2
        \leq
        \mathfrak w_1\|X_r\|_H^2+\mathfrak w_0
        \leq
        \mathfrak w_1\|X_r\|_V^2+\mathfrak w_0,
    \]
    so the diffusion term is also integrable. Hence $Y\in L^1(\Omega)$. By dominated convergence, we obtain
    \[
        M_{t\wedge\tau_n}\to M_t
        \qquad\text{and}\qquad
        M_{s\wedge\tau_n}\to M_s
        \qquad\text{in }L^1(\Omega).
    \]
    Using the $L^1$-contractivity of conditional expectation and the martingale property of $M_{\cdot\wedge\tau_n}$, we obtain
    \[
        \mathbb E[M_t\mid\mathcal F_s]
        =
        \lim_{n\to\infty}\mathbb E[M_{t\wedge\tau_n}\mid\mathcal F_s]
        =
        \lim_{n\to\infty} M_{s\wedge\tau_n}
        =
        M_s
        \qquad \text{in }L^1(\Omega).
    \]
    Thus, $\mathbb E[M_t\mid\mathcal F_s]=M_s$ $\mathbb P$-a.s. for all $0\leq s\leq t\leq T$, and $M$ is an $(\mathcal F_t)$-martingale on $[0,T]$. Since $M_0=0$, we also have $\mathbb E[M_t]=0$ for every $t\in[0,T]$.
\end{proof}


We now prove Theorem~\ref{thm:main} using Corollaries~\ref{cor:ito-energy} and~\ref{cor:ito-log-energy} and Lemma~\ref{lem:martingale}.

\begin{proof}[Proof of Theorem~\ref{thm:main}]
Fix $\mu>0$, and choose
\begin{equation}\label{eq:varrho-choice-main-rewrite}
    \varrho:=6C_{\mathrm{rc}}^2+3L_{F}^2+2\mu+\mathfrak w_1 .
\end{equation}
Throughout the proof, the feedback term is chosen as $U_L^\diamond K_L^{[\lambda]}$, with $K_L^{[\lambda]}$ defined by \eqref{eq:KL-def}. By Assumption~\ref{ass:stab-KM}, there exists $L_*\in\mathbb N_+$ such that, for every $L\geq L_*$, there exists $\lambda_*(L)\in[0,\infty)$ with the property that
\begin{equation}\label{eq:stab-KM-main-rewrite}
    2\langle (A-U_L^\diamond K_L^{[\lambda]})w,w\rangle_{V',V}
    \geq
    \|w\|_V^2+\varrho\|w\|_H^2
    \qquad \forall\,w\in V,
\end{equation}
for all $\lambda\geq \lambda_*(L)$. Fix such $L$ and $\lambda$, and let $X$ be the corresponding solution of~\eqref{sys-X-feed}.

We first prove~\eqref{eq:main-exp}. Fix $T>0$. By Corollary~\ref{cor:ito-energy}, for $\mathbb P$-a.e.\ sample path and all $t\in[0,T]$,
\begin{equation}\label{eq:ito-energy-main-rewrite}
\begin{aligned}
    e^{2\mu t}\|X_t\|_H^2
    &=
    \|X_0\|_H^2
    +M_t
    \\
    &\quad
    +
    \int_0^t e^{2\mu s}
    \Bigl(
        2\mu\|X_s\|_H^2
        +2\langle \mathbf A(s,X_s),X_s\rangle_{V',V}
        +\|\mathbf B(s,X_s)\|_{\mathcal L_2^0}^2
    \Bigr)\,\mathrm ds,
\end{aligned}
\end{equation}
where $M$ is the martingale from~\eqref{eq:martingale-def}. Using~\eqref{eq:AB-def}, we write
\begin{equation*}
    \begin{split}
        2\langle \mathbf A(s,X_s),X_s\rangle_{V',V}
        &=
        -2\langle (A-U_L^\diamond K_L^{[\lambda]})X_s,X_s\rangle_{V',V}
        -2\langle A_{\mathrm{rc}}(s)X_s,X_s\rangle_{V',V} \\&\phantom{=\;}
        -2\langle F(s,X_s),X_s\rangle_H.
    \end{split}
\end{equation*}
From~\eqref{eq:stab-KM-main-rewrite}, it follows that
\[
    -2\langle (A-U_L^\diamond K_L^{[\lambda]})X_s,X_s\rangle_{V',V}
    \leq
    -\|X_s\|_V^2-\varrho\|X_s\|_H^2 .
\]
Moreover, Lemma~\ref{lem:Arc-kappa} with $\kappa=\tfrac13$ gives
\[
    -2\langle A_{\mathrm{rc}}(s)X_s,X_s\rangle_{V',V}
    \leq
    \bigl|2\langle A_{\mathrm{rc}}(s)X_s,X_s\rangle_{V',V}\bigr|
    \leq
    \tfrac23\|X_s\|_V^2+6C_{\mathrm{rc}}^2\|X_s\|_H^2 .
\]
Further, since $F(s,0)=0$ and~\eqref{eq:N-lip} holds, we have
\[
    \|F(s,X_s)\|_H
    \leq
    L_{F}\|X_s\|_H
    \leq
    L_{F}\|X_s\|_V
\]
by~\eqref{eq:embed-contractive}. Hence Young's inequality with parameter $\tfrac13$ yields
\[
    -2\langle F(s,X_s),X_s\rangle_H
    \leq
    2\|F(s,X_s)\|_H\|X_s\|_H
    \leq
    \tfrac13\|X_s\|_V^2+3L_{F}^2\|X_s\|_H^2 .
\]
Combining the last three estimates and using~\eqref{eq:varrho-choice-main-rewrite}, we obtain
\begin{equation}\label{eq:drift-bound-main-rewrite}
    2\langle \mathbf A(s,X_s),X_s\rangle_{V',V}
    \leq
    -(2\mu+\mathfrak w_1)\|X_s\|_H^2 .
\end{equation}
Therefore, by~\eqref{eq:trace-bound}, we have
\begin{equation}\label{eq:integrand-bound-main-rewrite}
\begin{aligned}
    &2\mu\|X_s\|_H^2
    +2\langle \mathbf A(s,X_s),X_s\rangle_{V',V}
    +\|\mathbf B(s,X_s)\|_{\mathcal L_2^0}^2
    \\
    &\leq
    2\mu\|X_s\|_H^2
    -(2\mu+\mathfrak w_1)\|X_s\|_H^2
    +\mathfrak w_1\|X_s\|_H^2+\mathfrak w_0
    =
    \mathfrak w_0 .
\end{aligned}
\end{equation}
Inserting~\eqref{eq:integrand-bound-main-rewrite} into~\eqref{eq:ito-energy-main-rewrite}, taking expectations, and using Lemma~\ref{lem:martingale}, we get for every $t\in[0,T]$,
\[
    e^{2\mu t}\mathbb E\|X_t\|_H^2
    \leq
    \mathbb E\|X_0\|_H^2
    +\mathfrak w_0\int_0^t e^{2\mu s}\,\mathrm ds
    =
    \mathbb E\|X_0\|_H^2
    +\frac{\mathfrak w_0}{2\mu}(e^{2\mu t}-1).
\]
Thus, we have
\[
    \mathbb E\|X_t\|_H^2
    \leq
    e^{-2\mu t}\mathbb E\|X_0\|_H^2
    +\frac{\mathfrak w_0}{2\mu}(1-e^{-2\mu t})
    \leq
    e^{-2\mu t}\mathbb E\|X_0\|_H^2+\frac{\mathfrak w_0}{2\mu}
\]
for all $t\in[0,T]$. Since $T>0$ was arbitrary, this proves~\eqref{eq:main-exp} for all $t\geq0$.

We now prove~\eqref{eq:main-path-multiplicative}. Assume that $\Xi\equiv0$, so that $\mathfrak w_0=0$, and that $\|X_t\|_H>0$ for all $t\geq0$ almost surely. Fix again $T>0$. By Corollary~\ref{cor:ito-log-energy}, for $\mathbb P$-a.e.\ sample path and all $t\in[0,T]$, we can write 
\begin{equation}\label{eq:log-identity-main-rewrite}
    \log\|X_t\|_H^2-\log\|X_0\|_H^2
    =
    \int_0^t \mathcal I_s\,\mathrm ds+\widetilde M_t,
\end{equation}
with $\mathcal I_s$ and $\widetilde M_t$ defined by~\eqref{eq:log-terms}. Since the correction term in~\eqref{eq:log-terms} is nonpositive, \eqref{eq:drift-bound-main-rewrite} and $\mathfrak w_0=0$ imply
\[
    \mathcal I_s
    \leq
    \frac{
        2\langle \mathbf A(s,X_s),X_s\rangle_{V',V}
        +
        \|\mathbf B(s,X_s)\|_{\mathcal L_2^0}^2
    }{\|X_s\|_H^2}
    \leq
    -2\mu .
\]
Hence, we have 
\begin{equation}\label{eq:log-pathwise-main-rewrite}
    \log\|X_t\|_H^2
    \leq
    \log\|X_0\|_H^2+\widetilde M_t-2\mu t,
    \qquad t\in[0,T].
\end{equation}
We next define
\[
    \widetilde\Phi_s(u)
    :=
    \frac{2\langle X_s,\mathbf B(s,X_s)u\rangle_H}{\|X_s\|_H^2},
    \qquad u\in U_0.
\]
By Proposition~\ref{prop:wp}, the process $X$ is progressively measurable and has $H$-continuous trajectories and, thus, it is predictable. By~\eqref{eq:AB-def} and ~\ref{ass:Z} and~\ref{ass:Xi} from Assumption~\ref{ass:standing}, the map $\mathbf B\colon[0,\infty)\times H\to\mathcal L_2^0$ is measurable. Therefore $\widetilde\Phi$ is a predictable $\mathcal L_2(U_0,\mathbb R)$-valued process. Moreover, by Cauchy--Schwarz in $H$ and~\eqref{eq:trace-bound}, we can write
\[
    \|\widetilde\Phi_s\|_{\mathcal L_2(U_0,\mathbb R)}^2
    =
    \frac{4\|\mathbf B(s,X_s)^*X_s\|_{U_0}^2}{\|X_s\|_H^4}
    \leq
    \frac{4\|\mathbf B(s,X_s)\|_{\mathcal L_2^0}^2\|X_s\|_H^2}{\|X_s\|_H^4}
    \leq
    4\mathfrak w_1,
\]
where we have used the fact that $\mathfrak w_0=0$. Hence, we have
\[
    \mathbb E\int_0^T
    \|\widetilde\Phi_s\|_{\mathcal L_2(U_0,\mathbb R)}^2\,\mathrm ds
    \leq
    4\mathfrak w_1T<\infty,
\]
and, as a consequence,  $\widetilde\Phi\in \mathcal N_W^2(0,T;\mathcal L_2(U_0,\mathbb R))$. By~\cite[Thm.~4.27]{da2014stochastic}, $\widetilde M_t=\int_0^t\widetilde\Phi_s\,\mathrm dW_s$ is a continuous square-integrable martingale on $[0,T]$, and
\[
    [\widetilde M]_t
    =
    \int_0^t
    \|\widetilde\Phi_s\|_{\mathcal L_2(U_0,\mathbb R)}^2\,\mathrm ds
    \leq
    4\mathfrak w_1 t,
    \qquad t\in[0,T].
\]
Since $T>0$ is arbitrary, the same formula defines a continuous local martingale on $[0,\infty)$, still denoted by $\widetilde M$, and the bound
\begin{equation}\label{eq:QV-main-rewrite}
    [\widetilde M]_t\leq 4\mathfrak w_1 t
    \qquad\text{for all }t\geq0,\ \mathbb P\text{-a.s.}
\end{equation}
holds. Finally, dividing ~\eqref{eq:log-pathwise-main-rewrite} by $t>0$, letting $t\to\infty$, and using ~\eqref{eq:QV-main-rewrite}, we can infer that 
\[
    \limsup_{t\to\infty}\frac{[\widetilde M]_t}{t}
    \leq
    4\mathfrak w_1
    \qquad \mathbb P\text{-a.s.},
\]
and  the strong law of large numbers for continuous local martingales \cite[Thm.~1.3.4]{mao2007} gives
\[
    \lim_{t\to\infty}\frac{\widetilde M_t}{t}=0
    \qquad \mathbb P\text{-a.s.}
\]
Therefore, we have
\[
    \limsup_{t\to\infty}\frac1t\log\|X_t\|_H^2
    \leq
    \limsup_{t\to\infty}\frac{\log\|X_0\|_H^2}{t}
    +
    \limsup_{t\to\infty}\frac{\widetilde M_t}{t}
    -2\mu
    =
    -2\mu
\]
almost surely, which proves~\eqref{eq:main-path-multiplicative}.
\end{proof}

\subsection{Satisfiability of Assumption~\ref{ass:stab-KM} in Polygonal Domains}\label{ssec:satsf-ass:stab-KM} It remains to show that Assumption~\ref{ass:stab-KM} is satisfiable. For this purpose, we define the actuators as indicator functions constructed as follows. For a rectangular/box domain we can construct the actuators as in~\cite[Sect.~5]{KunRodWal21}
as illustrated in Fig.~\ref{fig.suppAct}.
\begin{figure}[htbp]%
    \centering%
    \subfigure[The supports of the actuators  in a rectangular domain. $N=L^2$\label{fig.suppActR}]
    {\includegraphics[width=1\textwidth]{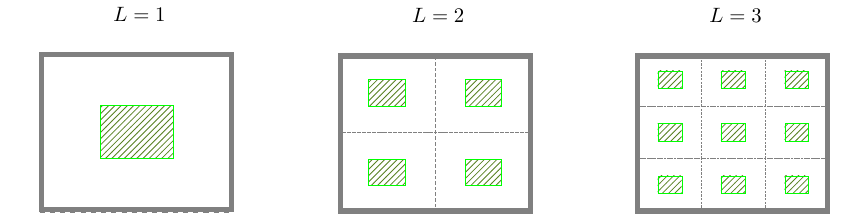}}
    \subfigure[The supports of the actuators in a triangular domain. $N=4^{L-1}$\label{fig.suppActT}]
    {\includegraphics[width=1\textwidth]{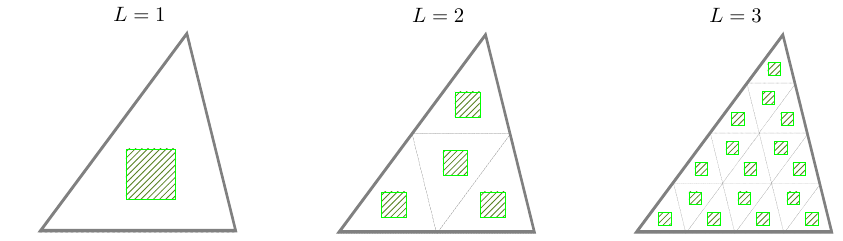}}
    \caption{Supports of the actuators in planar domains~$\mathcal{O}\subset\mathbb{R}^d$, $d=2$.} \label{fig.suppAct}
\end{figure}
Then, we take a family of feedback input operators of the from \eqref{eq:KL-def}, where~$\widetilde{\mathcal{U}}_L$ is either an auxiliary subspace of~$V$ satisfying the direct sum~$H=\mathcal{U}_L\oplus\widetilde{\mathcal{U}}_L^\perp$ or simply~$\widetilde{\mathcal{U}}_L=\mathcal{U}_L$. By ~\cite[Lem.~3.5 and Cor.~3.1]{KunRodWal21} we have that for any given~$\varrho>0$ we can find~$L=L(\varrho)$ and~$\lambda=\lambda(\varrho,L)$ so that
\begin{align}
    \|w\|_{V}^2 + \lambda \|P_{\widetilde{\mathcal{U}}_L}^{\mathcal{U}_L^\perp} w\|_H^2\ge \varrho\|w\|_{H}^2. \notag
\end{align}
Hence~$2\langle (A -U_L^\diamond K_L^{[\lambda]})w,w\rangle_{V',V}\ge \|w\|_{V}^2+\lambda \|P_{\widetilde{\mathcal{U}}_L}^{\mathcal{U}_L^\perp}w\|_{H}^2+\varrho\|w\|_{H}^2$, which implies that Assumption~\ref{ass:stab-KM} holds true for a rectangular/box domain. See also~\cite[Lem.~2.6]{AzmiKunRod23}.

The key point leading to the the results in~\cite[Lem.~3.5 and Cor.~3.1]{KunRodWal21} is that a rectangular domain with an arbitrary actuator region in its interior as in the reference initial  configuration~$L=1$, as in Fig.~\ref{fig.suppActR} above, can be partitioned into rescaled copies of itself. In particular, the total volume is kept constant, independent of~$L$. Furthermore, as noted in~\cite[Rem.~2.8]{AzmiKunRod23}, (using simplex partition results from~\cite[Sect.~3]{EdelsbrunnerGrayson00},  \cite[Thm.~4.1]{Bey00}), the argument can be extended for simplexes. Consequently,  Assumption~\ref{ass:stab-KM} holds true for a general polygonal domains.


\subsection{Application to observer design and data assimilation}
\label{subsec:observer-interpretation}

We briefly discuss the distinct and equally important problem of state estimation in applications, including observer design and continuous data assimilation~\cite{AzouaniOlsonTiti14}. We explain how the oblique-projection construction can be applied in the presence of additive noise in the state equation and stochastic measurement errors~\cite{BessaihOlsonTiti15}. We consider the dynamics
\begin{equation}\label{eq:obs-nominal}
\begin{aligned}
    \mathrm dX
    + \bigl(AX+A_{\mathrm{rc}}(t)X+F(t,X)\bigr)\,\mathrm dt
    &=
    \Xi(t)\,\mathrm dW ,
\end{aligned}
\end{equation}
where the initial state $X(0)$ is not known. Let $\mathcal O_1,\dots,\mathcal O_N\subset\mathcal O$ denote the supports of the sensors and set 
\[
    \psi_i:=\1_{\mathcal O_i},
    \qquad i=1,\dots,N .
\]
The sensors measure localized integrals of the unknown state,
\[
    \inp[H]{X(t)}{\psi_i}
    =
    \int_{\mathcal O_i}X(t,x)\,\mathrm dx,
    \qquad i=1,\dots,N .
\]
This leads to the output operator $\clC_N\colon H\to\mathbb R^N$, defined by
\[
    (\clC_Nv)_i:=\inp[H]{v}{\psi_i},
    \qquad i=1,\dots,N .
\]
The corresponding noiseless output is
\begin{equation}\label{eq:obs-output}
    Y_N^0(t)\coloneqq\clC_NX(t).
\end{equation}
We model measurement errors by the noisy observation process
\[
    \mathrm dY_N(t)
    =
    \clC_NX(t)\,\mathrm dt
    +
    R_N(t)\,\mathrm dW^{\rm obs}(t),
\]
where $W^{\rm obs}$ is an $\mathbb R^N$-valued standard Brownian motion, independent of $W$, and
\[
    R_N\in L^\infty([0,\infty);\mathcal L(\mathbb R^N)).
\]
We consider the Luenberger observer associated with \eqref{eq:obs-nominal}--\eqref{eq:obs-output},
\begin{equation}\label{eq:observer-formal}
\begin{aligned}
    \mathrm d\widehat X
    +\bigl(
        A\widehat X
        +A_{\rm rc}(t)\widehat X
        +F(t,\widehat X)
    \bigr)\,\mathrm dt
    &=
    \clL_N\bigl(
        \clC_N\widehat X(t)\,\mathrm dt-\mathrm dY_N(t)
    \bigr),
\end{aligned}
\end{equation}
where $\clL_N\colon\mathbb R^N\to H$ is an observer injection operator. Substituting the observation equation into \eqref{eq:observer-formal} yields
\begin{equation}\label{eq:observer-substituted}
\begin{aligned}
    \mathrm d\widehat X
    +\bigl(
        A\widehat X
        +A_{\rm rc}(t)\widehat X
        +F(t,\widehat X)
        -\clL_N\clC_N(\widehat X-X)
    \bigr)\,\mathrm dt
    &=
    -\clL_NR_N(t)\,\mathrm dW^{\rm obs}(t).
\end{aligned}
\end{equation}
We first observe that the coupled state--observer system is well posed within the same variational framework. To this end, we denote
\[
    \boldsymbol V:=V\times V,
    \qquad
    \boldsymbol H:=H\times H,
    \qquad
    \boldsymbol V^*:=V^*\times V^*.
\]
For $\boldsymbol X:=(X,\widehat X)^T$, we also define
\[
    \boldsymbol A\boldsymbol X
    :=
    (AX,A\widehat X)^T,
    \qquad
    \boldsymbol A_{\rm rc}(t)\boldsymbol X
    :=
    \bigl(A_{\rm rc}(t)X,A_{\rm rc}(t)\widehat X\bigr)^T,
\]
and
\[
    \boldsymbol G(t,\boldsymbol X)
    :=
    \begin{pmatrix}
        F(t,X)\\[1mm]
        F(t,\widehat X)-\clL_N\clC_N(\widehat X-X)
    \end{pmatrix}.
\]
Moreover, let
\[
    \widetilde U:=U\times\mathbb R^N,
    \qquad
    \widetilde Q:=\operatorname{diag}(Q,I_{\mathbb R^N}),
    \qquad
    \widetilde W:=(W,W^{\rm obs}),
\]
then $\widetilde W$ is a $\widetilde Q$-Wiener process on $\widetilde U$, and its associated covariance space is given by
\[
    \widetilde U_0
    :=
    \widetilde Q^{1/2}\widetilde U
    =
    U_0\times\mathbb R^N.
\]
Then, by defining 
\[
    \boldsymbol\Xi(t)(u,\eta)
    :=
    \begin{pmatrix}
        \Xi(t)u\\[1mm]
        -\clL_NR_N(t)\eta
    \end{pmatrix},
    \qquad
    (u,\eta)\in\widetilde U_0,
\]
the coupled system \eqref{eq:obs-nominal} and \eqref{eq:observer-substituted} can then be written as
\begin{equation}\label{eq:coupled-state-observer}
    \mathrm d\boldsymbol X
    +
    \bigl(
        \boldsymbol A\boldsymbol X
        +\boldsymbol A_{\rm rc}(t)\boldsymbol X
        +\boldsymbol G(t,\boldsymbol X)
    \bigr)\,\mathrm dt
    =
    \boldsymbol\Xi(t)\,\mathrm d\widetilde W.
\end{equation}
Since $F(t,\cdot)\colon V\to H$ is globally Lipschitz with respect to the $H$-norm, the map $\boldsymbol G(t,\cdot)\colon\boldsymbol V\to\boldsymbol H$ is globally Lipschitz with respect to the $\boldsymbol H$-norm. Moreover, $\boldsymbol G(t,0)=0$. Further, if the following holds, 
\begin{equation}\label{eq:bounded_coupled_noise}
    \Xi\in L^\infty([0,\infty);\mathcal L_2(U_0,H)),
    \qquad
    \clL_NR_N
    \in
    L^\infty([0,\infty);\mathcal L_2(\mathbb R^N,H)),
\end{equation}
then we obtain 
\[
    \boldsymbol\Xi
    \in
    L^\infty\bigl(
        [0,\infty);
        \mathcal L_2(\widetilde U_0,\boldsymbol H)
    \bigr).
\]
Thus,  the assumptions underlying Proposition~\ref{prop:wp} are verified on the product Gelfand triple. In particular, the global $H$-Lipschitz property of $F$ and the boundedness of $\clL_N\clC_N$ yield the growth, monotonicity and coercivity estimates required for the coupled drift. Consequently, \eqref{eq:coupled-state-observer} admits a unique global strong variational solution $(X,\widehat X)$.

We now consider the observer error $\mathcal E:=\widehat X-X$. Subtracting \eqref{eq:obs-nominal} from \eqref{eq:observer-substituted} gives
\begin{equation}\label{eq:observer-error-formal}
\begin{split}
    \mathrm d\mathcal E
    +\bigl(
        A\mathcal E
        +A_{\rm rc}(t)\mathcal E
        &+F(t,\widehat X)-F(t,X)
        -\clL_N\clC_N\mathcal E
    \bigr)\,\mathrm dt \\&
    =
    -\clL_NR_N(t)\,\mathrm dW^{\rm obs}
    -\Xi(t)\,\mathrm dW .
\end{split}
\end{equation}
Following the oblique-projection construction used in~\cite[Sect.~1.2, Eq.~(1.20)]{Rod21-aut}, we choose $\clL_N=\clL_N(\lambda)$ so that
\begin{equation}\label{eq:obs-LC=BK}
   \clL_N\clC_N
    :=
    -\lambda\,
    P_{\mathcal{U}_L}^{\widetilde{\mathcal{U}}_L^\perp}
    P_{\widetilde{\mathcal{U}}_L}^{\mathcal{U}_L^\perp},
    \qquad
    \lambda>0 .
\end{equation}
Thus, the feedback forcing operator used in Theorem~\ref{thm:main} is interpreted here as an output-injection mechanism for the observer error. Since the true state $X$ is stochastic, the nonlinear term in the error equation is random. Now, we consider
\[
    \clN_X(t,\omega,y)
    :=
    F(t,X(t,\omega)+y)-F(t,X(t,\omega)),
    \qquad
    y\in V.
\]
Since $X$ is progressively measurable and $F$ is Borel measurable, $\clN_X$ is progressively measurable in $(t,\omega)$ for every $y\in V$. Then \eqref{eq:observer-error-formal} becomes
\begin{equation}\label{eq:observer-error}
\begin{split}
    \mathrm d\mathcal E
    +\bigl(
        A\mathcal E
        +A_{\rm rc}(t)\mathcal E
        &+\clN_X(t,\cdot,\mathcal E)
        +\lambda
        P_{\mathcal{U}_L}^{\widetilde{\mathcal{U}}_L^\perp}
        P_{\widetilde{\mathcal{U}}_L}^{\mathcal{U}_L^\perp}\mathcal E
    \bigr)\,\mathrm dt
    \\&=
    -\clL_NR_N(t)\,\mathrm dW^{\rm obs}
    -\Xi(t)\,\mathrm dW .
\end{split}
\end{equation}
Thus, \eqref{eq:observer-error} is not a direct application of Theorem~\ref{thm:main}, since $\clN_X$ is not a deterministic nonlinearity. However, the well-posedness of the coupled system yields an adapted error process with the required variational regularity. Moreover, we have 
\[
    \clN_X(t,\omega,0)=0
\]
and, for a.e. $t\in[0,\infty)$, every $\omega\in\Omega$, and all $y,z\in V$, we can write
\[
\begin{aligned}
    \|\clN_X(t,\omega,y)-\clN_X(t,\omega,z)\|_H
    &=
    \|F(t,X(t,\omega)+y)-F(t,X(t,\omega)+z)\|_H \\
    &\leq
    L_F\|y-z\|_H.
\end{aligned}
\]
Hence $\clN_X(t,\omega,\cdot)$ has the same global $H$-Lipschitz constant as $F(t,\cdot)$, uniformly in $(t,\omega)$. The stochastic right-hand side in \eqref{eq:observer-error} can be written as a single additive noise term driven by $\widetilde W$. Indeed, for 
\[
    \widetilde\Xi_{\mathcal E}(t)(u,\eta)
    :=
    -\Xi(t)u-\clL_NR_N(t)\eta,
    \qquad
    (u,\eta)\in\widetilde U_0,
\]
due to \eqref{eq:bounded_coupled_noise}, we can infer that 
\[
    \widetilde\Xi_{\mathcal E}
    \in
    L^\infty\bigl(
        [0,\infty);
        \mathcal L_2(\widetilde U_0,H)
    \bigr).
\]
Thus, the additive stochastic forcing satisfies the analogue of Assumption~\ref{ass:Xi} on the product noise space. Together with the adaptedness and variational regularity inherited from the
coupled system, the preceding Lipschitz estimate allows us to carry out the energy argument used in Theorem~\ref{thm:main} for the observer-error equation. If the coercivity condition generated by $\clL_N\clC_N$ is chosen as in the feedback stabilization result, this yields stabilization of the observer error up to a noise-dependent residual level.

\begin{remark}
The choice of parameter $\lambda$ leads to a natural trade-off. Increasing $\lambda$ strengthens the deterministic damping generated by $\clL_N\clC_N$, but it can also amplify the measurement noise through the coefficient $\clL_NR_N$. Thus, in the noisy case, the residual error level depends on the balance between stabilization and noise amplification. This effect would be interesting to investigate further, in particular through numerical simulations.
\end{remark}

\begin{remark}
The observer formulation above is restricted to additive noise in the state equation. If the state equation contains a multiplicative noise term $Z(t,X(t))\,\mathrm dW$ and the driving $Q$-Wiener process $W$ is not available to the observer, then this term appears in the error equation as an additional stochastic forcing term. In particular, the error equation does not contain the error-dependent term $\bigl(Z(t,\widehat X)-Z(t,X)\bigr)\,\mathrm dW$. Hence, multiplicative noise is not directly covered by the present implementable observer formulation. 

On the other hand, consider a common-noise formulation in which the state and the estimator are driven by the same $Q$-Wiener process $W$, and the estimator contains the multiplicative noise term $Z(t,\widehat X)\,\mathrm dW$. Then the error equation contains the error-dependent stochastic term  $\bigl(Z(t,\widehat X)-Z(t,X)\bigr)\,\mathrm dW$. This setting is closer to stochastic synchronization or common-noise data assimilation than to a standard filtering observer. In this case, Assumption~\ref{ass:standing}-\ref{ass:Z} provides the required control of the multiplicative stochastic term, and the energy argument of Theorem~\ref{thm:main} can be carried out under the corresponding coercivity condition. If, in addition, $\Xi\equiv0$ and $\clL_NR_N\equiv0$, one obtains the corresponding mean-square exponential stability and almost-sure logarithmic stability estimates for the observer error. If either additive term is nonzero, one instead obtains stabilization up to a noise-dependent residual level.
\end{remark}

\section{Numerical Scheme}\label{sec:numerical-scheme}

This section describes the discretization used in the numerical experiments of Section~\ref{sec:simulations}. We first describe the finite-dimensional truncation of the stochastic forcing, then introduce the spatial finite-element approximation, and finally specify the time-stepping scheme.

Throughout Sections~\ref{sec:numerical-scheme}--\ref{sec:simulations}, we consider system~\eqref{eq:sys-X} in the following concrete setting:

\begin{enumerate}[label=(\roman*)]
    \item \emph{Temporal domain.} Finite time interval $[0,T]$ for some $T>0$.
    \item \emph{Spatial domain.} $\mathcal O = (0,L_{x_1})\times(0,L_{x_2})$ is a bounded rectangle.
    \item \emph{Boundary conditions.} Homogeneous Neumann type ($\mathfrak T = \partial_n$).
    \item \emph{Noise.} $W$ is a trace-class $L^2(\mathcal{O})$-Wiener process on $H$, with
    covariance eigenpairs as specified in~\eqref{eq:Q-eigenpairs}.
    \item \emph{Feedback.} $K = K_L^{[\lambda]}$ is the oblique-projection based feedback operator
    constructed in \eqref{eq:KL-def}, with gain $\lambda > 0$.
\end{enumerate}
The fully discrete state $\bigl(X_i^{\mathcal J,h}\bigr)_{i=0}^{m} \subset V_h$ is produced by three successive discretization layers, indexed by the noise truncation index set $\mathcal J\subset \mathbb N_0^2$, the number of time steps $m\in\mathbb N_+$, and the spatial mesh size $h>0$:
\begin{equation}\label{eq:num-pipeline}
    W_t
    \;\xrightarrow{\;\textup{KL truncation}\;}\;
    W_t^{\mathcal J}
    \;\xrightarrow{\;\textup{Galerkin}\;}\;
    X^{\mathcal J,h}(t) \in V_h
    \;\xrightarrow{\;\textup{BEM}\;}\;
    \bigl( X_i^{\mathcal J,h} \bigr)_{i=0}^m \subset V_h
\end{equation}

The three layers are described in turn in Section~\ref{subsec:KL-truncation}--\ref{subsec:BEM}. A reference implementation of the scheme described below is publicly archived
on Zenodo~\cite{vonderHeydtRodrigues2026code}.

\subsection{Karhunen--Lo\`eve Truncation of the Wiener Process}\label{subsec:KL-truncation}

We begin with the first layer of the approximation pipeline~\eqref{eq:num-pipeline}, namely the replacement of the infinite-dimensional Wiener process $W$ by a finite-mode approximation. Recall from~\eqref{eq:W-repr} that
\begin{equation}\label{eq:W-series-num}
    W(t) = \sum_{k=1}^{\infty} \sqrt{\lambda_k}\,\beta_k(t)\,e_k ,
\end{equation}
where $(e_k)_{k\geq 1}$ is an orthonormal basis of $U$ consisting of eigenvectors of the covariance operator $Q$, with corresponding eigenvalues $(\lambda_k)_{k\geq 1}$, and $(\beta_k)_{k\geq 1}$ is a sequence of independent standard Brownian motions. In the concrete two-dimensional setting of Section~\ref{sec:numerical-scheme}--\ref{sec:simulations}, the covariance operator $Q\in\mathcal L(H)$ is defined by its eigenpairs
\begin{equation}\label{eq:Q-eigenpairs}
        \lambda_{j,k}
        = \Bigl(\Bigl(\frac{j\pi}{L_{x_1}}\Bigr)^2
               \!\!\!+\Bigl(\frac{k\pi}{L_{x_2}}\Bigr)^2
               \!\!\!+\ell^{-2}\Bigr)^{-\alpha},
        \ \
        e_{j,k}(x_1,x_2)
        = \frac{c_m c_n}{\sqrt{L_{x_1} L_{x_2}}}\,
          \cos\Bigl(\frac{j\pi}{L_{x_1}}x\Bigr)\cos\Bigl(\frac{k\pi}{L_{x_2}}y\Bigr),
\end{equation}
for $(j,k)\in\mathbb N_0^2$, where $c_0\coloneqq 1$ and $c_\ell\coloneqq\sqrt{2}$ for $\ell\geq 1$, so that $(e_{j,k})_{(j,k)\in\mathbb N_0^2}$ is an $L^2(\mathcal O)$-orthonormal basis. Equivalently, $Q = (-\Delta_{\mathrm{Neu}} + \ell^{-2})^{-\alpha}$ is the spectral $\alpha$-th inverse power of the shifted Neumann Laplacian $-\Delta_{\mathrm{Neu}} + \ell^{-2}$ on $\mathcal O$. For $\alpha>1$, Proposition~\ref{prop:noise-trunc-error} below shows that $Q$ is trace class. Moreover, we can write 
\[
    \|e_{j,k}\|_{L^\infty(\mathcal O)}^2
    = \frac{c_j^2 c_k^2}{L_{x_1}L_{x_2}}
    \leq \frac{4}{L_{x_1}L_{x_2}}
    \qquad \forall\,(j,k)\in\mathbb N_0^2,
\]
and therefore
\[
    \sum_{(j,k)\in\mathbb N_0^2} \lambda_{j,k}\|e_{j,k}\|_{L^\infty(\mathcal O)}^2
    \leq
    \frac{4}{L_{x_1}L_{x_2}}
    \sum_{(j,k)\in\mathbb N_0^2} \lambda_{j,k}.
\]
Hence \eqref{eq:sufficient-Q-cond} holds whenever $\alpha>1$, and thus so does the pointwise trace condition~\eqref{eq:Q-cond}. The parameters $\alpha>0$ and $\ell>0$ governing the covariance decay and noise intensity are specified in Section~\ref{sec:simulations}.

For truncation parameters $J_{x_1}, J_{x_2}\in\mathbb N_+$, we retain the modes indexed by the finite set
\begin{equation}\label{eq:J-index-set}
    \mathcal{J}
    \;\coloneqq\;
    \{0,\ldots,J_{x_1}-1\}\times\{0,\ldots,J_{x_2}-1\}
\end{equation}
and approximate $W$ by the $\mathcal J$-mode truncation~\cite[\S 10.2, Eq.~(10.12)]{lord2014introduction}.
\begin{equation}\label{eq:W-trunc-num}
    W^{\mathcal J}(t)
    \;\coloneqq\;
    \sum_{(j,k)\in\mathcal{J}}
    \sqrt{\lambda_{j,k}}\,\beta_{j,k}(t)\,e_{j,k}
    \;\in\;
    \operatorname{span}\bigl\{e_{j,k}
        \bigm| (j,k)\in\mathcal{J}\bigr\}
    \;\subset\; U.
\end{equation}

We define the $\mathcal J$-mode truncated stochastic partial differential equation by replacing $W$ with $W^{\mathcal J}$ in the closed-loop system~\eqref{eq:sys-X}. More precisely, for fixed $\mathcal J\in\mathbb N_0^2$, we consider
\begin{equation}\label{eq:sys-X-J}
    \begin{split}
        \mathrm dX^{\mathcal J}
        &+ \bigl(-\nu \Delta X^{\mathcal J} + a(t,x)X^{\mathcal J} + b(t,x) \cdot \nabla X^J  + f(X^{\mathcal J})\bigr)\,\mathrm dt \\&
        = g(t,x,X^{\mathcal J})\,\mathrm dW^{\mathcal J}
        + U_L^\diamond K_L^{[\lambda]} X^{\mathcal J}\,\mathrm dt, \\
        & \partial_n X\rest{\partial\mathcal O}=0,
    \qquad  X^{\mathcal J}\rest{t=0} = X_0 .    
    \end{split}
\end{equation}

It is convenient to make the corresponding stochastic integral explicit. For every $t\geq 0$, the diffusion term in~\eqref{eq:sys-X-J} can be written as
\begin{equation}\label{eq:stoch-int-J}
    \int_0^t g\bigl(s,\cdot,X^{\mathcal J}(s)\bigr)\,\mathrm dW^{\mathcal J}(s)
    \;=\;
    \sum_{(j,k)\in\mathcal{J}}
    \sqrt{\lambda_{j,k}}
    \int_0^t g\bigl(s,\cdot,X^{\mathcal J}(s)\bigr)\,e_{j,k}\,
    \mathrm d\beta_{j,k}(s).
\end{equation}
The following proposition bounds the mean-square error $\mathbb E\|W(t)-W^{\mathcal J}(t)\|_H^2$ for the specific covariance structure~\eqref{eq:Q-eigenpairs}. Its role within the full approximation pipeline~\eqref{eq:num-pipeline}, and the scope of the resulting numerical error discussion, are summarized in Remark~\ref{rem:convergence-informal}.

To simplify the trace and truncation estimates below, we introduce the constants
\begin{equation}\label{eq:KL-Balpha}
    B_\alpha
    \;:=\;
    \int_0^{\pi/2}\cos^{2\alpha-2}\phi\,\mathrm d\phi
    \;=\;
    \frac12 \mathrm{B}\!\left(\frac12,\alpha-\frac12\right)
    \;=\;
    \frac{\sqrt{\pi}\,\Gamma\!\left(\alpha-\frac12\right)}{2\,\Gamma(\alpha)},
    \qquad \alpha>\frac12,
\end{equation}
and
\begin{equation}\label{eq:KL-Balpha-half}
    B_{\alpha-\frac12}
    \;:=\;
    \int_0^{\pi/2}\cos^{2\alpha-3}\phi\,\mathrm d\phi
    \;=\;
    \frac12 \mathrm{B}\!\left(\frac12,\alpha-1\right)
    \;=\;
    \frac{\sqrt{\pi}\,\Gamma(\alpha-1)}{2\,\Gamma\!\left(\alpha-\frac12\right)},
    \qquad \alpha>1,
\end{equation}
where the identities follow from \cite[\href{https://dlmf.nist.gov/5.12.E2}{eq.~(5.12.2)}]{NIST:DLMF}.

\begin{proposition}[KL truncation error]\label{prop:noise-trunc-error}
    Let $\alpha>0$, $\ell>0$, $L_{x_1}, L_{x_2}>0$, and let $(e_{j,k},\lambda_{j,k})$ be as defined in~\eqref{eq:Q-eigenpairs}. Furthermore, for truncation parameters $J_{x_1}, J_{x_2}\in\mathbb N_+$ with $J_{x_1},J_{x_2}\geq 2$, let $\mathcal{J}$ be as in~\eqref{eq:J-index-set}, and let $W^{\mathcal J}(t)$ be the $\mathcal J$-mode truncation~\eqref{eq:W-trunc-num} of the associated $Q$-Wiener process $W(t)$ in $H$. Then the following statement hold:
    \begin{enumerate}[label=(\roman*)]
        \item \emph{Trace-class condition.} $Q$ is trace-class if and only if $\alpha > 1$, and 
        \begin{equation}\label{eq:trace-Q}
            \mathrm{Tr}(Q) = \sum_{(j,k) \in \mathbb{N}_0^2} \lambda_{j,k}
            \,\leq\,
            \ell^{2\alpha} + \frac{(L_{x_1}+L_{x_2})\ell^{2\alpha-1}}{\pi}B_\alpha + \frac{L_{x_1}L_{x_2}\ell^{2\alpha-2}}{\pi^2}B_\alpha B_{\alpha-\frac12},
        \end{equation}
        where $B_\alpha$ and $B_{\alpha-\frac{1}{2}}$ are as defined in~\eqref{eq:KL-Balpha}--\eqref{eq:KL-Balpha-half}.
        \item \emph{Truncation error.} 
        For $\alpha > 1$, we have
        \begin{equation}\label{eq:trunc-error}
            \mathbb{E}\|W(t) - W^{\mathcal J}(t)\|_H^2 = t \sum_{\mathclap{(j,k) \notin \mathcal{J}}} \lambda_{j,k} \;\leq\; C_{\alpha,L_{x_1},L_{x_2}} \cdot t \cdot (\hat J - 1)^{-(2\alpha - 2)},
        \end{equation}
        where $\hat J \coloneqq \min(J_{x_1}, J_{x_2})$ and the constant is given explicitly by
        \begin{equation}\label{eq:trunc-const}
            C_{\alpha,L_{x_1},L_{x_2}} \;:=\; \frac{B_\alpha \,(L_{x_1}^{2\alpha-1} L_{x_2} + L_{x_1} L_{x_2}^{2\alpha-1})} {(\alpha-1) \, \pi^{2\alpha-1}} +  \frac{L_{x_1}^{2\alpha} + L_{x_2}^{2\alpha}} {(2\alpha-1) \, \pi^{2\alpha}}
        \end{equation}
        In particular, $\mathbb{E}\|W(t) - W^{\mathcal J}(t)\|_H^2 \to 0$ as
        $J_{x_1},J_{x_2}\to\infty$ for any $\alpha > 1$.
    \end{enumerate}
\end{proposition}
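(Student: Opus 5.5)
The plan is to reduce both parts to bounds on the lattice sum of $\lambda_{j,k}=(a^2j^2+b^2k^2+\ell^{-2})^{-\alpha}$, where $a:=\pi/L_{x_1}$ and $b:=\pi/L_{x_2}$, by comparing it with integrals. I split $\mathbb N_0^2$ into four pieces: the origin, the two axes ($j=0,k\geq1$ and $j\geq1,k=0$), and the interior $j,k\geq1$. Since $\lambda_{j,k}$ is decreasing in each index, every term can be dominated by the integral over the unit cell lying below it: for $k\geq1$, $\lambda_{0,k}\leq\int_{k-1}^{k}(b^2y^2+\ell^{-2})^{-\alpha}\,\mathrm dy$, and analogously in two dimensions over $[j-1,j]\times[k-1,k]$.

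\textbf{Part (i).} The origin contributes $\ell^{2\alpha}$. For the axis sums I substitute $by=\ell^{-1}\tan\phi$, which gives
\[
\int_0^\infty(b^2y^2+\ell^{-2})^{-\alpha}\,\mathrm dy=\frac{\ell^{2\alpha-1}}{b}B_\alpha .
\]
The two axes together therefore give $(L_{x_1}+L_{x_2})\ell^{2\alpha-1}B_\alpha/\pi$. For the interior I integrate over the quadrant, first in $x$ with $s:=b^2y^2+\ell^{-2}$ fixed. This yields $\frac{1}{a}s^{1/2-\alpha}B_\alpha$. Then I integrate in $y$ with the same tangent substitution and exponent $\alpha-\tfrac12$, which yields $\frac{\ell^{2\alpha-2}}{ab}B_{\alpha-\frac12}$. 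Adding the three contributions gives \eqref{eq:trace-Q}. For the converse direction (trace class forces $\alpha>1$), I bound each interior term below by the integral over the cell $[j,j+1]\times[k,k+1]$. The interior sum then dominates $\iint_{[1,\infty)^2}(a^2x^2+b^2y^2+\ell^{-2})^{-\alpha}$, and in polar coordinates this behaves like $\int^\infty r^{1-2\alpha}\,\mathrm dr$, which diverges when $\alpha\leq1$.

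\textbf{Part (ii).} The identity follows from \eqref{eq:W-series-num}--\eqref{eq:W-trunc-num}. The difference is $\sum_{(j,k)\notin\mathcal J}\sqrt{\lambda_{j,k}}\beta_{j,k}(t)e_{j,k}$, and by orthonormality and independence, $\mathbb E\beta_{j,k}(t)^2=t$. For the tail bound, the complement of $\mathcal J$ lies in the union $\{j\geq J_{x_1}\}\cup\{k\geq J_{x_2}\}$. I drop $\ell^{-2}$, writing $\lambda_{j,k}\leq(a^2j^2+b^2k^2)^{-\alpha}$, and split the union into two pieces:
\begin{enumerate}[label=(\alph*)]
\item \emph{On-axis terms} ($k=0$, $j\geq J_{x_1}$, and symmetrically). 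These give
\[
\sum_{j\geq J}(aj)^{-2\alpha}\leq a^{-2\alpha}\int_{J-1}^\infty x^{-2\alpha}\,\mathrm dx=\frac{L_{x_1}^{2\alpha}}{\pi^{2\alpha}(2\alpha-1)}(J-1)^{1-2\alpha}\leq\frac{L_{x_1}^{2\alpha}}{\pi^{2\alpha}(2\alpha-1)}(\hat J-1)^{-(2\alpha-2)},
\]
using $\hat J-1\geq1$. This produces the second summand of \eqref{eq:trunc-const}.
\item \emph{Off-axis terms} with $j\geq J_{x_1}$ and $k\geq0$, and symmetrically. Here I sum over $k$ first via the one-dimensional integral above, treating $k=0$ separately or absorbing it. This gives a bound $\lesssim\frac{1}{b}(aj)^{1-2\alpha}B_\alpha$. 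Summing over $j\geq J_{x_1}$ against $\int_{J-1}^\infty x^{1-2\alpha}\,\mathrm dx=(J-1)^{2-2\alpha}/(2\alpha-2)$ gives
\[
\frac{B_\alpha L_{x_1}^{2\alpha-1}L_{x_2}}{(\alpha-1)\pi^{2\alpha}}\cdot\frac{\pi}{2}\,(J-1)^{-(2\alpha-2)},
\]
which has the shape of the first summand of \eqref{eq:trunc-const}.
\end{enumerate}

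I expect the main obstacle to be the bookkeeping that makes the axis terms and the $k=0$ row in the off-axis sums land exactly on the stated explicit constant without double counting. I would first try to arrange the split as the on-axis row or column plus the $j,k\geq1$ block, using the crude factor $1/(\alpha-1)$ (rather than $1/(2\alpha-2)$) to absorb slack. If constants mismatch slightly, the first fix is to compare each $k\geq1$ term with $\int_{k-1}^k$ and to use $J\mapsto J-1$ shifts. Convergence to zero then follows immediately from $\alpha>1$.
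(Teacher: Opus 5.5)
Your proposal is correct and follows essentially the same route as the paper: the origin/axes/interior split with cell-wise integral comparison, tangent substitutions, and a polar-coordinate divergence argument for (i); and for (ii), the union bound over $\{j\ge J_{x_1}\}\cup\{k\ge J_{x_2}\}$, with the $k=0$ row handled by a one-dimensional integral and the $k\ge1$ block bounded by integrating out $k$ and comparing the $j$-sum with $\int_{J-1}^\infty x^{1-2\alpha}\,\mathrm dx$. There is one harmless arithmetic slip. The off-axis piece actually evaluates to $\frac{B_\alpha L_{x_1}^{2\alpha-1}L_{x_2}}{2(\alpha-1)\pi^{2\alpha}}(J_{x_1}-1)^{2-2\alpha}$, with $\pi^{2\alpha}$ rather than $\pi^{2\alpha-1}$; this is smaller than what you wrote, so the stated constant holds with room to spare, and the double counting you worry about only loosens the bound, since all terms are nonnegative.
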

\begin{proof}
We prove the two statements in sequence.

\medskip
\noindent\textbf{Proof of (i): trace-class property.}
The family $(e_{j,k})_{(j,k)\in\mathbb N_0^2}$ is the tensor-product cosine basis of $H=L^2(\mathcal O)$, hence an orthonormal basis of eigenvectors of $Q$, with eigenvalues $\lambda_{j,k}$ given by~\eqref{eq:Q-eigenpairs}. Therefore, we have
\begin{equation}\label{eq:TrQ-sum}
    \operatorname{Tr}(Q)
    =
    \sum_{(j,k)\in\mathbb N_0^2}\lambda_{j,k}
    =
    \sum_{(j,k)\in\mathbb N_0^2}
    \Bigl(
        \frac{\pi^2 j^2}{L_{x_1}^2}
        +
        \frac{\pi^2 k^2}{L_{x_2}^2}
        +
        \ell^{-2}
    \Bigr)^{-\alpha}.
\end{equation}
We denote 
\[
    f(s,t)
    :=
    \Bigl(
        \frac{\pi^2 s^2}{L_{x_1}^2}
        +
        \frac{\pi^2 t^2}{L_{x_2}^2}
        +
        \ell^{-2}
    \Bigr)^{-\alpha},
    \qquad s,t\ge 0.
\]
Then, since $f$ is decreasing in each variable, we can write
\begin{equation}\label{eq:interior-int-comp-TrQ}
    \sum_{j\ge1,k\ge1}\lambda_{j,k}
    \le
    \int_0^\infty\!\!\int_0^\infty f(s,t)\,\mathrm dt\,\mathrm ds,
\end{equation}
and
\begin{equation}\label{eq:boundary-int-comp-TrQ}
    \sum_{j\ge1}\lambda_{j,0}\le \int_0^\infty f(s,0)\,\mathrm ds,
    \qquad
    \sum_{k\ge1}\lambda_{0,k}\le \int_0^\infty f(0,t)\,\mathrm dt.
\end{equation}
For fixed $s\ge0$, let
\[
    A(s):=\frac{\pi^2 s^2}{L_{x_1}^2}+\ell^{-2}.
\]
Using the substitution
\[
    t=\frac{L_{x_2}}{\pi}A(s)^{1/2}\tan\phi,
    \qquad \phi\in[0,\pi/2),
\]
and the definition of $B_\alpha$ in~\eqref{eq:KL-Balpha}, we obtain
\begin{equation}\label{eq:t-integral}
    \int_0^\infty
    \Bigl(A(s)+\frac{\pi^2 t^2}{L_{x_2}^2}\Bigr)^{-\alpha}\,\mathrm dt
    =
    \frac{L_{x_2}}{\pi}A(s)^{\frac12-\alpha}
    \int_0^{\pi/2}\cos^{2\alpha-2}\phi\,\mathrm d\phi
    =
    \frac{L_{x_2}B_\alpha}{\pi}A(s)^{\frac12-\alpha}.
\end{equation}
Similarly, the substitution
\[
    s=\frac{L_{x_1}}{\pi\ell}\tan\psi,
    \qquad \psi\in[0,\pi/2),
\]
and the definition of $B_{\alpha-\frac12}$ in~\eqref{eq:KL-Balpha-half}, yields
\[
    \int_0^\infty A(s)^{\frac12-\alpha}\,\mathrm ds
    =
    \frac{L_{x_1}\ell^{2\alpha-2}}{\pi}
    \int_0^{\pi/2}\cos^{2\alpha-3}\psi\,\mathrm d\psi
    =
    \frac{L_{x_1}\ell^{2\alpha-2}}{\pi}B_{\alpha-\frac12}.
\]
Hence, for $\alpha>1$, we have 
\begin{equation}\label{eq:interior-TrQ-bound-correct}
    \sum_{j\ge1,k\ge1}\lambda_{j,k}
    \le
    \frac{L_{x_1}L_{x_2}\ell^{2\alpha-2}}{\pi^2}
    B_\alpha B_{\alpha-\frac12}.
\end{equation}
Moreover, evaluating the one-dimensional integrals in~\eqref{eq:boundary-int-comp-TrQ} in the same way gives
\begin{equation}\label{eq:boundary-TrQ-bound-correct}
    \sum_{j\ge1}\lambda_{j,0}
    \le
    \frac{L_{x_1}\ell^{2\alpha-1}}{\pi}B_\alpha,
    \qquad
    \sum_{k\ge1}\lambda_{0,k}
    \le
    \frac{L_{x_2}\ell^{2\alpha-1}}{\pi}B_\alpha.
\end{equation}
Since $\lambda_{0,0}=\ell^{2\alpha}$, we conclude for $\alpha>1$ that 
\begin{equation}\label{eq:trace-Q-correct}
    \Tr(Q)
    \le
    \ell^{2\alpha}
    +
    \frac{(L_{x_1}+L_{x_2})\ell^{2\alpha-1}}{\pi}B_\alpha
    +
    \frac{L_{x_1}L_{x_2}\ell^{2\alpha-2}}{\pi^2}
    B_\alpha B_{\alpha-\frac12}
    <\infty.
\end{equation}
It remains now to show that the series in~\eqref{eq:TrQ-sum} diverges for $\alpha\le1$. Since $f$ is decreasing in each variable, for every $(j,k)\in\mathbb N_0^2$ and every $(s,t)\in[j,j+1]\times[k,k+1]$, we have
\[
    f(s,t)\le f(j,k)=\lambda_{j,k}.
\]
Hence, it holds that 
\[
    \sum_{(j,k)\in\mathbb N_0^2}\lambda_{j,k}
    \ge
    \int_0^\infty\!\!\int_0^\infty f(s,t)\,\mathrm dt\,\mathrm ds.
\]
After the scaling $x=\frac{\pi s}{L_{x_1}}$ and $y=\frac{\pi t}{L_{x_2}}$, we obtain the exact identity
\[
    \int_0^\infty\!\!\int_0^\infty f(s,t)\,\mathrm dt\,\mathrm ds
    =
    \frac{L_{x_1}L_{x_2}}{\pi^2}
    \int_0^\infty\!\!\int_0^\infty
    (x^2+y^2+\ell^{-2})^{-\alpha}\,\mathrm dy\,\mathrm dx.
\]
Since $(x^2+y^2+\ell^{-2})^{-\alpha}\asymp (x^2+y^2)^{-\alpha}$ as $x^2+y^2\to\infty$, there exist $R>0$ and $c>0$ such that
\[
    (x^2+y^2+\ell^{-2})^{-\alpha}
    \ge
    c\,(x^2+y^2)^{-\alpha},
\]
whenever $x^2+y^2\ge R^2$. Passing to polar coordinates in the first quadrant, we obtain
\[
    \int_0^\infty\!\!\int_0^\infty (x^2+y^2+\ell^{-2})^{-\alpha}\,\mathrm dy\,\mathrm dx
    \ge
    c\int_0^{\pi/2}\!\!\int_R^\infty r^{1-2\alpha}\,\mathrm dr\,\mathrm d\theta.
\]
The last integral diverges for $\alpha\le1$, because $r^p$ is not integrable at infinity when $p\ge -1$ and here $p=1-2\alpha$. Thus $\sum_{(j,k)\in\mathbb N_0^2}\lambda_{j,k}=\infty$ for $\alpha\le1$.

\medskip
\noindent\textbf{Proof of (ii): truncation estimate.}
For the remainder of the proof, we assume $\alpha>1$ and set $\hat J:=\min(J_{x_1},J_{x_2})$ and $\mathcal J$ as in \eqref{eq:J-index-set}. We recall that
\[
    f(s,t)=
    \Bigl(
        \frac{\pi^2 s^2}{L_{x_1}^2}
        +
        \frac{\pi^2 t^2}{L_{x_2}^2}
        +
        \ell^{-2}
    \Bigr)^{-\alpha},
    \qquad
    A(s)=\frac{\pi^2 s^2}{L_{x_1}^2}+\ell^{-2}.
\]
By (i), $Q$ is trace class, so $W_t$ is a well-defined $H$-valued $Q$-Wiener process, and it satisfies 
\[
    W_t-W_t^{\mathcal J}
    =
    \sum_{(j,k)\notin\mathcal J}
    \sqrt{\lambda_{j,k}}\,\beta_{j,k}(t)\,e_{j,k}.
\]
Using the orthonormality of $(e_{j,k})_{(j,k)\in\mathbb N_0^2}$ in $H$ and the independence of the Brownian motions, we obtain
\begin{equation}\label{eq:ito-isom}
    \mathbb E\|W_t-W_t^{\mathcal J}\|_H^2
    =
    t\sum_{(j,k)\notin\mathcal J}\lambda_{j,k}.
\end{equation}
Moreover, we can write 
\begin{equation}\label{eq:S1S2}
    \sum_{(j,k)\notin\mathcal J}\lambda_{j,k}
    =
    \underbrace{\sum_{j\ge J_{x_1}}\sum_{k\ge0}\lambda_{j,k}}_{=:S_1}
    +
    \underbrace{\sum_{j=0}^{J_{x_1}-1}\sum_{k\ge J_{x_2}}\lambda_{j,k}}_{=:S_2}.
\end{equation}
Since $f$ is decreasing in each variable, the comparison argument from the proof of (i) yields
\[
    S_1
    \le
    \int_{J_{x_1}-1}^{\infty}\int_0^{\infty} f(s,t)\,\mathrm dt\,\mathrm ds
    +
    \int_{J_{x_1}-1}^{\infty} f(s,0)\,\mathrm ds.
\]
By \eqref{eq:t-integral} and the bound $A(s)\ge \pi^2s^2/L_{x_1}^2$, we have
\[
    \int_{J_{x_1}-1}^{\infty}\int_0^{\infty} f(s,t)\,\mathrm dt\,\mathrm ds
    \le
    \frac{L_{x_2}B_\alpha}{\pi}
    \int_{J_{x_1}-1}^{\infty}A(s)^{\frac12-\alpha}\,\mathrm ds
    \le
    \frac{L_{x_1}^{2\alpha-1}L_{x_2}B_\alpha}
         {2\pi^{2\alpha-1}(\alpha-1)}
    (J_{x_1}-1)^{2-2\alpha},
\]
and
\[
    \int_{J_{x_1}-1}^{\infty} f(s,0)\,\mathrm ds
    \le
    \int_{J_{x_1}-1}^{\infty}
    \Bigl(\frac{\pi s}{L_{x_1}}\Bigr)^{-2\alpha}\,\mathrm ds
    =
    \frac{L_{x_1}^{2\alpha}}
         {\pi^{2\alpha}(2\alpha-1)}
    (J_{x_1}-1)^{1-2\alpha}.
\]
Since $J_{x_1}\ge2$ and $1-2\alpha<2-2\alpha<0$, it follows that
\[
    (J_{x_1}-1)^{1-2\alpha}
    \le
    (J_{x_1}-1)^{2-2\alpha},
\]
and, therefore,
\[
    S_1
    \le
    \Bigl(
        \frac{L_{x_1}^{2\alpha-1}L_{x_2}B_\alpha}
             {2\pi^{2\alpha-1}(\alpha-1)}
        +
        \frac{L_{x_1}^{2\alpha}}
             {\pi^{2\alpha}(2\alpha-1)}
    \Bigr)
    (J_{x_1}-1)^{2-2\alpha}.
\]
By nonnegativity of the eigenvalues, we obtain
\[
    S_2
    =
    \sum_{j=0}^{J_{x_1}-1}\sum_{k\ge J_{x_2}}\lambda_{j,k}
    \le
    \sum_{k\ge J_{x_2}}\sum_{j\ge0}\lambda_{j,k}.
\]
The right-hand side has the same form as $S_1$, with the roles of $(j,L_{x_1},J_{x_1})$ and $(k,L_{x_2},J_{x_2})$ interchanged. Hence the preceding argument yields
\[
    S_2
    \le
    \Bigl(
        \frac{L_{x_1}L_{x_2}^{2\alpha-1}B_\alpha}
             {2\pi^{2\alpha-1}(\alpha-1)}
        +
        \frac{L_{x_2}^{2\alpha}}
             {\pi^{2\alpha}(2\alpha-1)}
    \Bigr)
    (J_{x_2}-1)^{2-2\alpha}.
\]
Since $\hat J=\min(J_{x_1},J_{x_2})$ and $2-2\alpha<0$, we have
\[
    (J_{x_1}-1)^{2-2\alpha}
    \le
    (\hat J-1)^{2-2\alpha},
    \qquad
    (J_{x_2}-1)^{2-2\alpha}
    \le
    (\hat J-1)^{2-2\alpha}.
\]
Combining the bounds for $S_1$ and $S_2$ with \eqref{eq:S1S2}, and then using \eqref{eq:ito-isom}, yields \eqref{eq:trunc-error} with the constant \eqref{eq:trunc-const}. In particular, we can infer that
\[
    \mathbb E\|W_t-W_t^{\mathcal J}\|_H^2 \to 0
    \qquad\text{as}\qquad
    J_{x_1},J_{x_2}\to\infty .
\]
This completes the proof.
\end{proof}

\begin{remark}[Scope of the truncation estimate]\label{rem:convergence-informal}
Proposition~\ref{prop:noise-trunc-error} provides a rigorous mean-square error bound for the first approximation layer in~\eqref{eq:num-pipeline}, namely the Karhunen--Lo\`eve truncation of the Wiener process.

After this truncation, the closed-loop equation~\eqref{eq:sys-X-J} is driven by finitely many independent Brownian motions. This suggests a possible route toward a fully discrete error analysis through abstract space-time approximation results for stochastic evolution equations with finite-dimensional noise, such as those in~\cite{gyongy_millet2009}.

We do not carry out that program here. A rigorous application of such results to the present truncated closed-loop problem would require additional regularity and consistency assumptions, as well as a verification of the corresponding hypotheses for our finite-element and backward-Euler discretizations.

Accordingly, the purpose of Proposition~\ref{prop:noise-trunc-error} is to justify the approximation of the stochastic forcing used in the simulations. The convergence of the full three-layer scheme is included only as numerical support for the implementation, and the experiments in Section~\ref{sec:simulations} show that the truncation rate is captured accurately and that the associated state error decays even faster in the reported examples.
\end{remark}

\subsection{Spatial Discretization}\label{subsec:galerkin}

We cover $\mathcal O=[0,L_{x_1}]\times[0,L_{x_2}]$ with a structured rectangular mesh $\mathcal{T}_h$ of $n_{x_1}\times n_{x_2}$ uniform quadrilateral elements. This choice aligns the discretization with the rectangular actuator supports and with the tensor-product structure of the Neumann-Laplacian eigenbasis~\eqref{eq:Q-eigenpairs}, which simplifies both the assembly and the evaluation of the truncated noise. It is also natural in the present setting, since the domain $\mathcal O$ itself is rectangular. We take $V_h \subset H^1\mathcal O)$ to be the standard bilinear $Q_1$ finite element space, with $n \coloneqq(n_{x_1}+1)(n_{x_2}+1)$ degrees of freedom and global nodal basis ${\phi_1,\ldots,\phi_n}$.The following matrices are assembled from the mesh and problem data. The \emph{mass matrix} $M\in\mathbb R^{n\times n}$ and the \emph{diffusion stiffness matrix} $K_\nu\in\mathbb R^{n\times n}$ are time-independent:
\begin{equation}\label{eq:M-Knu}
    M_{ij} \;=\; \int_\mathcal O \phi_i\,\phi_j\,\mathrm d x,
    \qquad
    (K_\nu)_{ij} \;=\; \nu\int_\mathcal O \nabla\phi_i \cdot \nabla\phi_j\,\mathrm d x.
\end{equation}
The \emph{reaction--convection matrix} $K_\mathrm{rc}(t)\in\mathbb R^{n\times n}$ is assembled at each time step:
\begin{equation}\label{eq:Krc}
    \bigl(K_\mathrm{rc}(t)\bigr)_{ij}
    \;=\;
    \int_\mathcal O a(t,x)\,\phi_i\,\phi_j\,\mathrm d x
    \;+\;
    \int_\mathcal O b(t,x)\cdot\nabla\phi_j\,\phi_i\,\mathrm d x.
\end{equation}
We write the combined drift matrix as
\begin{equation}\label{eq:K-full}
    K(t) \;\coloneqq\; K_\nu + K_\mathrm{rc}(t).
\end{equation}

The domain $\mathcal O$ is partitioned into $N = N_1 \times N_2$ congruent rectangular sub-cells, with $N_1$ columns in $x$ and $N_2$ rows in $y$. The center of sub-cell $(n_1,n_2)$ is
\begin{equation}\label{eq:actuator-centers}
    c_{n_1,n_2}
    \;\coloneqq\;
    \Bigl(\tfrac{(n_1-\tfrac{1}{2})L_{x_1}}{N_1},\;
          \tfrac{(n_2-\tfrac{1}{2})L_{x_2}}{N_2}\Bigr),
    \quad
    n_1=1,\ldots,N_1,\quad n_2=1,\ldots,N_2.
\end{equation}
Each actuator support $\clo_i \subset \mathcal O$ is the open axis-aligned box centered at $c_{n_1,n_2}$ with half-widths $(r_{x_1}, r_{x_2})$ chosen so that $|\clo_i| = r_{\mathrm{vol}}\cdot|\mathcal O|/N$, where $r_{\mathrm{vol}}\in(0,1]$ is the prescribed volume fraction parameter; see Figure~\ref{fig.suppActR}.

In the numerical implementation we take $N=L^2$. The actuator and auxiliary subspaces $\mathcal U_L$ and $\widetilde{\mathcal U}_L$ are represented on $V_h$ by the matrices
\begin{equation}\label{eq:BCtilde}
    B\in\mathbb R^{n\times N},\quad B_{ij}=\int_{\clo_j}\phi_i(x)\,\mathrm{d}x,
    \qquad
    \widetilde B\in\mathbb R^{n\times N},\quad \widetilde B_{ij}=\widetilde\phi_j(x_i),
\end{equation}
where $B$ is the actuator matrix (columns are the $M$-Riesz representatives of
$\1_{\clo_j}$) and $\widetilde B$ contains the nodal evaluations of the
$N$ auxiliary basis functions chosen so that the discrete direct-sum
condition~\eqref{eq:direct-sum} holds (concretely, $\widetilde\phi_j$ are
sin-smoothed bump functions supported on $\clo_j$, see Section~\ref{sec:simulations}).
The Galerkin realization of the oblique-projection feedback
$-\lambda\, P_{\mathcal{U}_L}^{\widetilde{\mathcal{U}}_L^\perp} \circ P_{\widetilde{\mathcal{U}}_L}^{\mathcal{U}_L^\perp}$ is then the $n\times n$ matrix
\begin{equation}\label{eq:Kfeed}
    K_{\mathrm{feed}}
    \coloneqq
    -\lambda\;
    B\,(\widetilde{B}^\top M B)^{-1}\widetilde{B}^\top M
    \;\cdot\;
    \widetilde{B}\,(B^\top M\widetilde{B})^{-1}B^\top M.
\end{equation}
The formula is the composition of two $M$-oblique projections: the right factor
$\widetilde{B}(B^\top M\widetilde{B})^{-1}B^\top M$ projects onto
$\operatorname{span}(\widetilde{B})$ along $\operatorname{span}(B)^{\perp_M}$,
and the left factor $B(\widetilde{B}^\top M B)^{-1}\widetilde{B}^\top M$
projects the result onto $\operatorname{span}(B)$ along $\operatorname{span}(\widetilde{B})^{\perp_M}$. The two $N\times N$ Gram matrices $\widetilde{B}^\top M B$ and $B^\top M\widetilde{B}=({\widetilde{B}^\top M B})^\top$ are each other's transpose and are the only objects that must be factored; this is done once offline, independently of time and the solution.

Projecting the truncated equation~\eqref{eq:sys-X-J} onto $V_h$ by the standard Galerkin procedure, the coefficient vector $\bm{X}^{\mathcal J,h}(t)\in\mathbb R^n$ of the semi-discrete solution $X^{\mathcal J,h}(t) = \sum_{i=1}^n [\bm{X}^{\mathcal J,h}(t)]_i\,\phi_i \in V_h$ satisfies the It\^o stochastic differential equation
\begin{equation}\label{eq:semi-discrete}
    \begin{split}
        M\,\mathrm d\bm{X}^{\mathcal J,h} &+ \bigl( K(t) -K_{\mathrm{feed}} \bigr) \bm{X}^{\mathcal J,h} \,\mathrm d t + M\, f\bigl(\bm{X}^{\mathcal J,h}\bigr) \,\mathrm d t \\ &\;=\; \sum_{(j,k)\in\mathcal{J}} \sqrt{\lambda_{j,k}}\, M \bigl( \bm{g}(t,\bm{X}^{\mathcal J,h}) \odot \bm{e}_{j,k} \bigr) \,\mathrm d\beta_{j,k},  \qquad \bm{X}^{\mathcal J,h}(0) = \bm{u}_0,
    \end{split}
\end{equation}
where $K(t) = K_\nu + K_{\mathrm{rc}}(t)$ is the combined drift matrix of~\eqref{eq:K-full}, $\odot$ denotes the Hadamard product, $\bm{g}(t,\bm{X}^{\mathcal J,h})\in\mathbb R^n$ is the nodal Nemytskii vector with $[\bm{g}(t,\bm{X}^{J\mathcal ,h})]_i \coloneqq g(t,x_i,[\bm{X}^{\mathcal J,h}]_i)$, $\bm{u}_0\in\mathbb R^n$ is the nodal interpolation of $X_0$, and $\bm{e}_{j,k}\in\mathbb R^n$ is the vector of pointwise nodal evaluations of the eigenfunction $e_{j,k}$,
\begin{equation}\label{eq:ejk-nodal}
    [\bm{e}_{j,k}]_i \;\coloneqq\; e_{j,k}(x_i) \;=\; \frac{c_j\,c_k}{\sqrt{L_{x_1} L_{x_2}}}\, \cos\Bigl(\frac{j\pi}{L_{x_1}}x_i\Bigr) \cos\Bigl(\frac{k\pi}{L_{x_2}}y_i\Bigr),
\end{equation}
with $c_0\coloneqq 1$, $c_j\coloneqq\sqrt{2}$ for $j\geq 1$
(cf.~\eqref{eq:Q-eigenpairs}).

\subsection{Time Discretization}\label{subsec:BEM}
Let $m \in \mathbb{N}_+$ and set $\tau \coloneqq T/m$. We partition $[0,T]$ uniformly into the grid points $t_i \coloneqq i\tau$ for $i = 0, \ldots, m$. The fully discrete solution is represented at each time step by its coefficient vector $\bm{X}_i^{\mathcal J,h} \in \mathbb{R}^n$, defined as the nodal coordinate vector of the fully discrete FE function $X_i^{\mathcal J,h} \in V_h$ with respect to the basis $\{\phi_1,\ldots,\phi_n\}$. We further define, for each $i = 0, \ldots, m-1$, the nodal diffusion vector
\begin{equation}\label{eq:nodal-diffusion-vec}
    \bm{g}_i \;\coloneqq\; \bigl( g\bigl(t_i, x_j,[\bm{X}_i^{\mathcal J,h}]_j \bigr) \bigr)_{j=1}^{n} \;\in\; \mathbb{R}^n,
\end{equation}
where $x_j$ denotes the $j$-th FE node and $[\bm{X}_i^{\mathcal J,h}]_j$ its $j$-th component, and the KL-truncated nodal Wiener increment
\begin{equation}\label{eq:nodal-wiener-inc}
    \Delta\bm{w}_i \;\coloneqq\; \sum_{(j,k)\in\mathcal{J}}
    \sqrt{\lambda_{j,k}}\,\bm{e}_{j,k}\,\Delta\beta_{j,k}^i
    \;\in\; \mathbb{R}^n,
\end{equation}
where $\Delta\beta_{j,k}^i \coloneqq \beta_{j,k}(t_{i+1}) - \beta_{j,k}(t_i) \sim \mathcal{N}(0,\tau)$ are independent Gaussian increments and $[\bm{e}_{j,k}]_i \coloneqq e_{j,k}(x_i)$ is the vector of nodal evaluations of the $(j,k)$-th KL eigenfunction.

Given the time grid and the notation of the preceding paragraph, we discretize the semi-discrete system~\eqref{eq:semi-discrete} by the following \emph{implicit--explicit backward Euler--Maruyama} (IMEX-BEM) scheme: for each $i = 0, \ldots, m-1$, given $\bm{X}_i^{\mathcal J,h} \in \mathbb{R}^n$, determine $\bm{X}_{i+1}^{\mathcal J,h} \in \mathbb{R}^n$ by solving the linear system
\begin{equation}\label{eq:BEM-scheme}
    \begin{split}
        \bigl( M + \tau &K(t_{i+1}) - \tau K_{\mathrm{feed}} \bigr) \, \bm{X}_{i+1}^{\mathcal J,h} \;=\; M \, \bm{X}_i^{\mathcal J,h} - \tau \, M \, f\bigl( \bm{X}_i^{\mathcal J,h} \bigr) + M\bigl( \bm{g}_i \odot \Delta\bm{w}_i \bigr), \\
        &\bm{X}_0^{\mathcal J,h} = \bm{X}_0,
    \end{split}
\end{equation}
where every matrix coefficient was defined in Section~\ref{subsec:galerkin}, and $\bm{g}_i \in \mathbb{R}^n$, $\Delta\bm{w}_i \in \mathbb{R}^n$ are as introduced in~\eqref{eq:nodal-diffusion-vec}--\eqref{eq:nodal-wiener-inc}. The scheme~\eqref{eq:BEM-scheme} uses an \emph{implicit--explicit} (IMEX) splitting of the drift: the linear stiff part $K(t_{i+1}) - K_{\mathrm{feed}}$ is evaluated at the new time step $t_{i+1}$ and moved to the left-hand side, while the nonlinearity $f$ is evaluated explicitly at the known step $t_i$. The rationale for this splitting becomes clear at the level of the time-stepping, where the SPDE has already been reduced, via the KL truncation of Section~\ref{subsec:KL-truncation} and the Galerkin discretization of Section~\ref{subsec:galerkin}, to a finite-dimensional It\^o SODE in $\mathbb{R}^n$. The implicit treatment of the linear stiff drift and the explicit treatment of the nonlinearity are motivated by numerical stability. For nonlinear SODEs with a one-sided Lipschitz drift, the backward Euler--Maruyama method is exponentially mean-square stable for all step sizes $\tau > 0$. By contrast, the forward Euler--Maruyama method can diverge in mean square for any fixed step size~\cite[Lem.~4.1, Thm.~4.4, Cor.~4.5]{higham_mao_stuart2003}, and this failure becomes more severe as the spatial mesh is refined and the stiffness of $M^{-1}K(t)$ increases~\cite[Thms.~4.1,~4.3]{higham2000}.

\subsection{Algorithm Overview}\label{subsec:algorithm-overview}

We summarize the fully discrete scheme in two algorithms. Algorithm~\ref{alg:offline} collects the offline computations that depend only on the mesh, the actuator geometry, and the gain parameter~$\lambda$; these are performed once before the time loop. Algorithm~\ref{alg:online} describes the IMEX--BEM time-stepping loop that produces the discrete trajectory $\bigl(X_i^{\mathcal J,h}\bigr)_{i=0}^{m}$.

\begin{algorithm}[htbp]
\caption{Offline: Feedback operator $K_{\mathrm{feed}}$}
\label{alg:offline}
\begin{algorithmic}[1]
\Require Mesh $\mathcal{T}_h$, actuator supports $\{\clo_i\}_{i=1}^N$,
         auxiliary functions $\{\widetilde\phi_i\}_{i=1}^N$,
         gain parameter $\lambda > 0$.
\State Assemble the consistent mass matrix $M$
\State Assemble the actuator matrix $B \in \mathbb{R}^{n \times N}$ and the auxiliary matrix $\widetilde{B} \in \mathbb{R}^{n \times N}$ (cf.~\eqref{eq:BCtilde}).
\State Factorize the two $N\times N$ Gram matrices $\widetilde{B}^\top M B$ and $\widetilde{B}^\top M \widetilde{B}$.
\State Compute the feedback operator
       (cf.~\eqref{eq:Kfeed}):
       \[
           K_{\mathrm{feed}}
           \;\leftarrow\;
           -\lambda\;
            B\,(\widetilde{B}^\top M B)^{-1}\widetilde{B}^\top M
            \;\cdot\;
            \widetilde{B}\,(B^\top M\widetilde{B})^{-1}B^\top M.
       \]
\Ensure $M$,\; $K_{\mathrm{feed}}$.
\end{algorithmic}
\end{algorithm}

\begin{algorithm}[htbp]
\caption{Online: IMEX--BEM time-stepping}
\label{alg:online}
\begin{algorithmic}[1]
\Require Initial vector $\bm{X}_0^{\mathcal J,h}$, time grid $\{t_i\}_{i=0}^{m}$,
         matrices $M$, $K_{\mathrm{feed}}$ from Algorithm~\ref{alg:offline}.
\State \textbf{Pre-loop (once).}
       Build the KL noise basis matrix $\Phi \in \mathbb{R}^{n \times |\mathcal J|}$ and the scaled square-root eigenvalue vector $\bm{s} \in \mathbb{R}^{|\mathcal J|}$, $s_k = \sqrt{\lambda_k}\,c_{j}\,c_{l}/\!\sqrt{|\mathcal{O}|}$, for the $|\mathcal J|$ retained modes (cf.~\eqref{eq:nodal-wiener-inc}).
\For{$i = 0, 1, \ldots, m-1$}
    \State \textbf{Noise increment.}
           Draw $\bm{\xi} \sim \mathcal{N}(0, I_{|\mathcal J|})$ and set (cf.~\eqref{eq:nodal-wiener-inc})
           \[
               \Delta\bm{w}_i
               \;\leftarrow\;
               \sqrt{\tau}\;\Phi\,(\bm{s} \odot \bm{\xi}).
           \]

    \State \textbf{Diffusion coefficient.}
           Evaluate $\bm{g}_i \in \mathbb{R}^n$ nodewise:
           $[\bm{g}_i]_k = g(t_i,\, x_k,\, [\bm{X}_i^{\mathcal J,h}]_k)$
           \phantom{epty}(cf.~\eqref{eq:nodal-diffusion-vec}).
    \State \textbf{Noise right-hand side.}
           $\bm{n}_i \leftarrow M(\bm{g}_i \odot \Delta\bm{w}_i)$.
    \State \textbf{Stiffness assembly.}
           Assemble $K(t_{i+1}) = K_\nu + K_\mathrm{rc}(t_{i+1})$
           (cf.~\eqref{eq:K-full}).
    \State \textbf{System matrix and right-hand side.}
           Form (cf.~\eqref{eq:BEM-scheme})
           \begin{align*}
               S_i    &\;\leftarrow\; M + \tau\bigl(K(t_{i+1}) - K_{\mathrm{feed}}\bigr), \\
               \bm{r}_i &\;\leftarrow\; M \bm{X}_i^{\mathcal J,h}
                                        - \tau M f(\bm{X}_i^{\mathcal J,h})
                                        + \bm{n}_i.
           \end{align*}
    \State \textbf{Linear solve.}\label{alg:online-linsolve}
           Solve $S_i\, \bm{X}_{i+1}^{\mathcal J,h} = \bm{r}_i$.
           \hfill\Comment{one linear solve per step}
\EndFor
\Ensure Trajectory $\bigl(\bm{X}_i^{\mathcal J,h}\bigr)_{i=0}^{m}$.
\end{algorithmic}
\end{algorithm}

\section{Simulations}\label{sec:simulations}

We verify the theoretical results of Theorem~\ref{thm:main} and investigate the influence of actuator placement, number of actuators, noise intensity, and nonlinear effects on the closed-loop stabilization behavior. As described in Section \ref{sec:numerical-scheme}, we consider system~\eqref{eq:sys-X} with homogeneous Neumann boundary conditions. For the covariance operator $Q = (-\Delta_{\mathrm{Neu}} + \ell^{-2})^{-\alpha}$, we set $\alpha = 1.5$ and $\ell = 0.25$. The parameter $\sigma\geq 0$ appears explicitly as a global scaling in front of $g$ in all three examples below. Setting $\sigma=0$ recovers the deterministic closed-loop system and allows a direct comparison between the deterministic and stochastic runs. All coefficients satisfy Assumptions~\ref{ass:standing} with constants as recorded in Remark~\ref{rem:verification}. All experiments reported in this section are reproducible with the archived
code~\cite{vonderHeydtRodrigues2026code}.

It remains to specify the subspaces $\mathcal{U}_L$ and $\widetilde{\mathcal{U}}_L$ in~\eqref{eq:KL-def}. The actuator subspace $\mathcal{U}_L = \operatorname{span}\{\1_{\clo_j}\}_{j=1}^N \subset H$ is spanned by indicator functions. The auxiliary subspace $\widetilde{\mathcal{U}}_L \subset V$ is spanned by sin-smoothed bump functions $\widetilde\phi_j$ supported on $\clo_j$. Following~\cite[Sect.~5, Eq.~(5.2)]{KunRodWal21}, we set
\[
  \widetilde\phi_j(x) \;=\; \sin\Bigl( \pi\,\tfrac{x_1 - p^j_1}{2r_1} \Bigr) \sin\Bigl( \pi\,\tfrac{x_2 - p^j_2}{2r_2} \Bigr) \1_{\clo_j}(x),
\]
where $p^j = (p^j_1, p^j_2)$ is the lower-left corner of $\clo_j$ and $r_1, r_2 > 0$ are its half-widths. The direct-sum condition~$\mathcal{U}_L\oplus\widetilde{\mathcal{U}}_L^\perp$ reduces to verifying that the $N \times N$ Gram matrix $\widetilde{B}^\top M B$ is invertible, which holds for any mesh fine enough,  because the~$\clo_j$ are pairwise disjoint.
%

Throughout this section we fix $\hat J = J_{x_1}=J_{x_2}=24$ ($|\mathcal J|=576$ modes), a $Q_1$ mesh of $n_{x_1}\times n_{x_2}=80\times 80$ elements, and a uniform time step $\tau=10^{-2}$. To assess this choice, we carry out a KL truncation study for $\hat J\in\{2,4,6,8,12,16,24,32\}$. For these square truncations, we write
\[
    \mathcal J_{\hat J}:=\{0,\ldots,\hat J-1\}^2.
\]
For the noise process we compare the reference error
\begin{equation}\label{eq:noise-error}
    \mathbb E\bigl\|W^{\mathcal J_{\hat J_\infty}}(T)-W^{\mathcal J_{\hat J}}(T)\bigr\|_H^2,
    \qquad \hat J_\infty=512,
\end{equation}
with the upper bound from Proposition~\ref{prop:noise-trunc-error}. In addition, we compute the Monte Carlo state error
\begin{equation}\label{eq:state-error}
    \mathbb E\bigl\|X^{\mathcal J_{\hat J_{\mathrm{ref}}}}(T)-X^{\mathcal J_{\hat J}}(T)\bigr\|_M^2,
    \qquad \hat J_{\mathrm{ref}}=40,
\end{equation}
with 50 samples for an uncontrolled and naturally stable test equation obtained from~\eqref{eq:ex1} by replacing $a(t)=-c_0X$ with $a(t)=+c_0X$ and setting $K\equiv 0$.

Figure~\ref{fig:kl-convergence} shows that the reference noise error decays consistently with the rate predicted by Proposition~\ref{prop:noise-trunc-error}. The same figure also indicates that, in the reported experiment, the state error decays substantially faster than the noise error. In view of Remark~\ref{rem:convergence-informal}, this faster decay should be understood as a numerical observation rather than as a consequence of a rigorous error estimate for the full three-layer scheme. At the analytical level, Proposition~\ref{prop:noise-trunc-error} justifies the approximation of the stochastic forcing, while the behavior of the state error is assessed here only through the simulations.

\begin{figure}[htbp]
    \centering
    \begin{tikzpicture}
\begin{axis}[xlabel={$\hat{J}$}, ylabel={mean-square error}, xmode={log}, ymode={log}, xmin={1.7214159528501156}, xmax={43.195481842432095}, ymin={1.0e-9}, ymax={1.0}, grid={major}, grid style={{dotted, gray!60}}, width={0.68\textwidth}, height={0.46\textwidth}, legend pos={outer north east}, legend style={{font=\footnotesize, draw=none, fill=none, row sep=0.5ex}}, tick label style={{font=\small}}, label style={{font=\small}}, line width={0.8pt}, minor tick style={{gray!50}}, xtick={{2,4,6,8,12,16,24,32}}, xticklabels={{2,4,6,8,12,16,24,32}}]
    \addplot+[color={myblue}, solid, thick, mark={*}, mark size={1.6pt}]
        table[row sep={\\}]
        {
            \\
            2.0  0.028099500497754754  \\
            4.0  0.013376304019183202  \\
            6.0  0.008508096459665632  \\
            8.0  0.006189401988916952  \\
            12.0  0.003974372286261595  \\
            16.0  0.002910606460987779  \\
            24.0  0.0018780330924629274  \\
            32.0  0.001373814315507843  \\
        }
        ;
    \addlegendentry {Noise error~\eqref{eq:noise-error}}
    \addplot+[color={myblue}, dashed, semithick, no marks]
        table[row sep={\\}]
        {
            \\
            2.0  0.43753626900255055  \\
            4.0  0.14584542300085018  \\
            6.0  0.08750725380051011  \\
            8.0  0.06250518128607864  \\
            12.0  0.03977602445477733  \\
            16.0  0.029169084600170037  \\
            24.0  0.019023316043589155  \\
            32.0  0.014114073193630663  \\
        }
        ;
    \addlegendentry {$\text{Prop.\,16 bound}$}
    \addplot+[color={mygray}, dash dot, thin, no marks]
        table[row sep={\\}]
        {
            \\
            2.0  0.028099500497754754  \\
            2.0714383899485265  0.027130423607195  \\
            2.1454285016762715  0.026194767596123554  \\
            2.2220614806309875  0.025291379867469266  \\
            2.3014317279024454  0.024419147574162456  \\
            2.3836370165113485  0.023576996248263267  \\
            2.4687786118519885  0.02276388847736778  \\
            2.5569613964370204  0.02197882262666132  \\
            2.6482939990980183  0.02122083160504472  \\
            2.7428889288009715  0.020488981673813668  \\
            2.8408627132415614  0.019782371296423452  \\
            2.9423360423909513  0.01910013002792231  \\
            3.0474339171689153  0.01844141744268526  \\
            3.1562858034274544  0.017805422099127475  \\
            3.269025791434579  0.017191360540122  \\
            3.3857927610547254  0.016598476327890393  \\
            3.5067305528292874  0.016026039112177363  \\
            3.631988145168003  0.015473343730561636  \\
            3.7617198378694714  0.014939709339794688  \\
            3.8960854421968842  0.014424478577097272  \\
            4.035250477743102  0.0139270167503806  \\
            4.179386376327598  0.013446711056394656  \\
            4.328670693176423  0.012982969825840483  \\
            4.48328732564537  0.012535221794516516  \\
            4.64342673975574  0.01210291539960115  \\
            4.809286204821783  0.011685518100204658  \\
            4.981070036458846  0.011282515721353446  \\
            5.15898984827158  0.010893411820598541  \\
            5.343264812532237  0.010517727076468092  \\
            5.534121930170195  0.010154998698010467  \\
            5.731796310405289  0.009804779854700688  \\
            5.9365314603694195  0.009466639126007823  \\
            6.148579585073201  0.00914015996994539  \\
            6.368201898087205  0.008824940209949972  \\
            6.595668943320455  0.008520591539456081  \\
            6.83126092829261  0.008226739043556891  \\
            7.07526806931036  0.0079430207371616  \\
            7.327990948973237  0.007669087119080551  \\
            7.589740886449245  0.007404600741488743  \\
            7.860840320976464  0.007149235794237395  \\
            8.141623209062972  0.006902677703501525  \\
            8.432435435874478  0.006664622744269069  \\
            8.733635241316364  0.006434777666194244  \\
            9.04559366133504  0.006212859332354211  \\
            9.368694984982223  0.005998594370464089  \\
            9.703337227805207  0.005791718836120584  \\
            10.049932622146208  0.0055919778876595055  \\
            10.408908124954854  0.005399125472226536  \\
            10.78070594373931  0.005212924022674601  \\
            11.165784081303935  0.0050331441649144465  \\
            11.56461689994455  0.004859564435357904  \\
            11.97769570579633  0.004691971008105782  \\
            12.40552935405406  0.0045301574315443445  \\
            12.848644875810464  0.004373924374025832  \\
            13.307588127284609  0.004223079378319836  \\
            13.782924462240274  0.004077436624532943  \\
            14.275239428422577  0.003936816701204677  \\
            14.785139488870694  0.0038010463842977275  \\
            15.31325276899534  0.0036699584238102123  \\
            15.860229830341263  0.0035433913377471075  \\
            16.42674447198785  0.003421189213197075  \\
            17.01349456057518  0.0033032015142695987  \\
            17.621202889977933  0.0031892828966558645  \\
            18.250618071686105  0.003079293028584949  \\
            18.902515456989466  0.002973096417954741  \\
            19.577698092101702  0.002870562245424658  \\
            20.276997707400746  0.0027715642032645625  \\
            21.001275742004072  0.0026759803397613333  \\
            21.751424404940984  0.002583692908991446  \\
            22.52836777422902  0.002494588225774505  \\
            23.333062935208606  0.002408556525629029  \\
            24.16650115953808  0.0023254918295579903  \\
            25.02970912630137  0.002245291813497555  \\
            25.92375018673283  0.002167857682268164  \\
            26.849725674116833  0.002093094047872728  \\
            27.808776260476083  0.0020209088119919802  \\
            28.802083361719696  0.001951213052532253  \\
            29.830870592981945  0.0018839209140859257  \\
            30.896405275944673  0.0018189495021695918  \\
            32.0  0.001756218781109672  \\
        }
        ;
    \addlegendentry {$\text{slope}\ \hat{J}^{-1}$}
    \addplot+[color={myred}, solid, thick, mark={square*}, mark size={1.6pt}, error bars/y dir={both}, error bars/y explicit={true}]
        table[row sep={\\}]
        {
            x  y  {y error plus}  {y error minus}  \\
            2.0  0.005727819390425388  0.0037411055504402097  0.0037411055504402097  \\
            4.0  0.0007055472636166125  0.0002526298375064279  0.00025262983750642796  \\
            6.0  0.00016244443776601664  4.219292116012892e-5  4.219292116012892e-5  \\
            8.0  5.4650639300063524e-5  1.3195498067398636e-5  1.3195498067398629e-5  \\
            12.0  9.676188647569752e-6  1.936995845492894e-6  1.936995845492894e-6  \\
            16.0  2.4486985518925744e-6  3.3556481838522156e-7  3.3556481838522156e-7  \\
            24.0  2.999939162408281e-7  3.662081274511397e-8  3.662081274511397e-8  \\
            32.0  4.278825832005637e-8  4.67584954722945e-9  4.67584954722945e-9  \\
        }
        ;
    \addlegendentry {State error~\eqref{eq:state-error}}
\end{axis}
\end{tikzpicture}
    \caption{KL truncation study for $\alpha=1.5$ and $\ell=0.25$.
    Blue solid: noise error $\mathbb E\bigl\|W^{\mathcal J_{\hat J_\infty}}(T)-W^{\mathcal J_{\hat J}}(T)\bigr\|_H^2$.
    Blue dashed: the bound from Proposition~\ref{prop:noise-trunc-error}.
    Gray dash-dotted: reference slope $\hat J^{-1}$.
    Red solid: Monte Carlo state error $\mathbb E\bigl\|X^{\mathcal J_{\hat J_{\mathrm{ref}}}}(T)-X^{\mathcal J_{\hat J}}(T)\bigr\|_M^2$ using 50 samples.}
    \label{fig:kl-convergence}
\end{figure}

Figure~\ref{fig:kl-convergence-sensitivity} shows the dependence of the same quantity on the covariance exponent $\alpha$ at fixed $\ell=0.25$. As $\alpha$ increases, the decay in $\hat J$ becomes steeper, in agreement with the rate $(\hat J-1)^{-(2\alpha-2)}$ from Proposition~\ref{prop:noise-trunc-error}. On the basis of Figures~\ref{fig:kl-convergence} and~\ref{fig:kl-convergence-sensitivity}, the choice $J_{x_1}=J_{x_2}=24$ is sufficient for the simulations below.

\begin{figure}[htbp]
    \centering
    \begin{tikzpicture}
\begin{axis}[xmode={log}, ymode={log}, xmin={1.7214159528501156}, xmax={43.195481842432095}, ymin={1.0e-11}, ymax={100.0}, grid={major}, grid style={{dotted, gray!60}}, tick label style={{font=\small}}, label style={{font=\small}}, title style={{font=\normalsize}}, line width={0.8pt}, minor tick style={{gray!50}}, xtick={{2,4,6,8,12,16,24,32}}, xticklabels={{2,4,6,8,12,16,24,32}}, width={0.62\textwidth}, height={0.42\textwidth}, legend image code/.code={{\draw[#1, solid, line width=1.1pt] (0cm,0cm) -- (0.28cm,0cm);}}, xlabel={$\hat{J}$}, ylabel={mean-square error}, legend pos={outer north east}, legend columns={1}, legend cell align={{left}}, legend style={{font=\scriptsize, draw=none, fill=none, row sep=0.4ex}}]
    \addplot+[color={myblue}, solid, thick, mark={*}, mark size={1.5pt}]
        table[row sep={\\}]
        {
            \\
            2.0  0.13027978017864156  \\
            4.0  0.08625853954285569  \\
            6.0  0.06702702951747372  \\
            8.0  0.05602988736459155  \\
            12.0  0.043485642467705476  \\
            16.0  0.03625423983491624  \\
            24.0  0.027889163944883277  \\
            32.0  0.022998958964000615  \\
        }
        ;
    \addlegendentry {$\alpha=1.25$}
    \addplot+[color={myblue}, dashed, semithick, no marks, forget plot]
        table[row sep={\\}]
        {
            \\
            2.0  1.7975835951399404  \\
            4.0  1.0378353725448997  \\
            6.0  0.8039038227942734  \\
            8.0  0.6794227362270997  \\
            12.0  0.5419918467615737  \\
            16.0  0.46413408849284304  \\
            24.0  0.3748220900592146  \\
            32.0  0.32285554452348486  \\
        }
        ;
    \addplot+[color={teal!70!black}, solid, thick, mark={*}, mark size={1.5pt}]
        table[row sep={\\}]
        {
            \\
            2.0  0.028099500497754754  \\
            4.0  0.013376304019183202  \\
            6.0  0.008508096459665632  \\
            8.0  0.006189401988916952  \\
            12.0  0.003974372286261595  \\
            16.0  0.002910606460987779  \\
            24.0  0.0018780330924629274  \\
            32.0  0.001373814315507843  \\
        }
        ;
    \addlegendentry {$\alpha=1.50$}
    \addplot+[color={teal!70!black}, dashed, semithick, no marks, forget plot]
        table[row sep={\\}]
        {
            \\
            2.0  0.43753626900255055  \\
            4.0  0.14584542300085018  \\
            6.0  0.08750725380051011  \\
            8.0  0.06250518128607864  \\
            12.0  0.03977602445477733  \\
            16.0  0.029169084600170037  \\
            24.0  0.019023316043589155  \\
            32.0  0.014114073193630663  \\
        }
        ;
    \addplot+[color={orange!85!black}, solid, thick, mark={*}, mark size={1.5pt}]
        table[row sep={\\}]
        {
            \\
            2.0  0.002228143575774469  \\
            4.0  0.0005423445042344356  \\
            6.0  0.0002259142416874485  \\
            8.0  0.00012171015940680256  \\
            12.0  5.1458442277402955e-5  \\
            16.0  2.8159506762643824e-5  \\
            24.0  1.214895226787048e-5  \\
            32.0  6.722366418904571e-6  \\
        }
        ;
    \addlegendentry {$\alpha=2.00$}
    \addplot+[color={orange!85!black}, dashed, semithick, no marks, forget plot]
        table[row sep={\\}]
        {
            \\
            2.0  0.05750457999095845  \\
            4.0  0.00638939777677316  \\
            6.0  0.0023001831996383383  \\
            8.0  0.0011735628569583356  \\
            12.0  0.0004752444627351938  \\
            16.0  0.00025557591107092646  \\
            24.0  0.0001087043100018118  \\
            32.0  5.9838272623265805e-5  \\
        }
        ;
    \addplot+[color={purple!70!black}, solid, thick, mark={*}, mark size={1.5pt}]
        table[row sep={\\}]
        {
            \\
            2.0  2.563139872420201e-5  \\
            4.0  1.7187699109757928e-6  \\
            6.0  3.110034669273382e-7  \\
            8.0  9.202606782290684e-8  \\
            12.0  1.6748630512976488e-8  \\
            16.0  5.05992140857137e-9  \\
            24.0  9.511695509998794e-10  \\
            32.0  2.9324109758260483e-10  \\
        }
        ;
    \addlegendentry {$\alpha=3.00$}
    \addplot+[color={purple!70!black}, dashed, semithick, no marks, forget plot]
        table[row sep={\\}]
        {
            \\
            2.0  0.002340936262071654  \\
            4.0  2.890044767989696e-5  \\
            6.0  3.7454980193146466e-6  \\
            8.0  9.749838659190562e-7  \\
            12.0  1.598890965146953e-7  \\
            16.0  4.624071628783514e-8  \\
            24.0  8.36523690978682e-9  \\
            32.0  2.5347948363617653e-9  \\
        }
        ;
\end{axis}
\end{tikzpicture}
    \caption{Dependence of the KL truncation error on $\alpha$ at fixed $\ell=0.25$.
    For each color, the solid curve shows noise error $\mathbb E\bigl\|W^{\mathcal J_{\hat J_\infty}}(T)-W^{\mathcal J_{\hat J}}(T)\bigr\|_H^2$ and the dashed curve shows the corresponding bound from Proposition~\ref{prop:noise-trunc-error}. Larger values of $\alpha$ yield faster decay in $\hat J$.}
    \label{fig:kl-convergence-sensitivity}
\end{figure}

\subsection{Rotating convection-diffusion: additive versus multiplicative noise}
\label{subsec:sim-linear}

As a first example we consider a passive scalar transported by a time-periodic rotating background flow, a standard model in fluid mixing and geophysical transport. The closed-loop system reads
\begin{subequations}\label{eq:ex1}
\begin{align}
    \mathrm d X  + \Bigl( -\nu\Delta X &+ a(t)X + b(t)\cdot \nabla X \Bigr) \, \mathrm d t  \notag
    \\&= \sigma\bigl(\xi(x) + z\,X\bigr)\,\mathrm d W_t + \sum_{i=1}^{N} [K_L^{[\lambda]}X]_i \, \1_{\clo_i}(x) \, \mathrm d t,
    \label{eq:ex1-pde}\\
    &\partial_n X\rest{\partial\mathcal O} = 0,\qquad X\rest{t=0} = X_0,
    \label{eq:ex1-bic}
\end{align}
\end{subequations}
where we set $a(t) = -c_0$ and $b(t) = \beta(-\sin(\omega t), \cos(\omega t))^\top$ with $c_0,\beta,\omega,z\geq 0$.
The instability of the free dynamic (uncontrolled system) is driven by the constant reaction coefficient $a(t) = -c_0 < 0$. 

\begin{table}[htbp]
\centering
    \caption{Parameters for Example~\ref{subsec:sim-linear}.}
    \label{tab:params-ex1}
    \begin{tabular}{cccccccccc}
    \toprule
    $\nu$ & $L_{x_1}, L_{x_2}$ & $c_0$ & $\beta$ & $\omega$ & $\sigma$ & $N$ & $\lambda$ & $T$ & $X_0(x)$\\
    \midrule
    $0.1$ & $1$ & $2.5$ & $1.0$ & $\pi/2$ & $5$ & $9$ & $0.5$ & $2.5$ & $\sin(\pi x_1) \sin(\pi x_2)$\\
    \bottomrule
\end{tabular}
\end{table}

The feedback uses $N=9$ ($L =3$) actuators arranged on a uniform $3\times3$ grid. The union of the actuator supports occupies $25\%$ of the spatial domain, and the feedback strength is $\lambda=0.5$. The noise amplitude $\sigma = 5$ is shared by both sub-cases below, allowing a direct comparison between the two noise regimes.

We consider two noise regimes. In sub-case~(a) we set $z = 1$ and
$\xi \equiv 0$, so the noise is purely multiplicative and Theorem~\ref{thm:main}~(ii) predicts almost-sure pathwise decay to zero. In sub-case~(b) we set $z = 0$ and $\xi(x) = \sin(\pi x_1)\sin(\pi x_2)$, so the noise is purely additive and Theorem~\ref{thm:main}~(i) predicts mean-square stabilization to a finite noise floor. The results are shown in Figure~\ref{fig:ex1}.

In sub-case~(a), Figure~\ref{fig:ex1-det-mult} displays the evolution of the $H$-energy for four configurations: uncontrolled and stabilized dynamics in the deterministic case $(\sigma=0)$, and the corresponding uncontrolled and stabilized dynamics under multiplicative noise. The observed behavior is consistent with the theoretical findings. In both deterministic and stochastic settings, the stabilized energy decays, and the stochastic closed-loop curve closely follows its deterministic counterpart, illustrating that stabilization persists under multiplicative noise with amplitude $\sigma=5$.

In sub-case~(b), Figure~\ref{fig:ex1-additive} displays the evolution of the mean $H$-energy for the different configurations. In the stochastic closed-loop case, this quantity is approximated by averaging $\|X(t)\|_H^2$ over $N_{\mathrm{MC}}=20$ independent Monte Carlo realizations. The stabilized mean energy decays rapidly and then approaches a stationary level, whereas the uncontrolled dynamics grow over the simulated time interval. The empirical Monte Carlo mean approaches the asymptotic noise floor $\|\xi\|_H^2\Tr(Q)/\lambda^2$, while the upper bound from Theorem~\ref{thm:main} remains above this level by only a moderate factor.

\begin{figure}[htbp]
    \centering
    \subfigure[Rotating convection-diffusion: deterministic vs.\ multiplicative noise]{
        \input{Figs/fig4a}
        \label{fig:ex1-det-mult}
    }
    \hfill
    \subfigure[Additive noise: MC bundle and noise floor]{
        \input{Figs/fig4b}
        \label{fig:ex1-additive}
    }
    \caption{Example~\ref{subsec:sim-linear}. Evolution of the $H$-energy and the mean $H$-energy for different configurations. \emph{Top}: deterministic and multiplicative-noise cases. \emph{Bottom}: additive-noise case with $N_{\mathrm{MC}}=20$ Monte Carlo realizations, empirical mean, asymptotic noise floor, and theorem bound.}
    \label{fig:ex1}
\end{figure}

To demonstrate the effectiveness of the feedback, we consider a numerical test in which the feedback is switched on and off. More precisely, we multiply the control term in~\eqref{eq:ex1} by $\1_I(t)$, where $I\subset[0,T]$ denotes the time interval on which the actuators are active. Both runs use pure multiplicative noise $(z=1,\xi\equiv0)$, with the same parameters and noise realization as in Example~\ref{subsec:sim-linear}.

In Run~A, the actuators are active on $[0,T_{\mathrm{off}}]$ and are then switched off permanently. As shown in Figure~\ref{fig:ex5-intermittent}, the energy decreases during the controlled phase, but starts to grow again after the feedback is removed. In Run~B, the actuators are reactivated at $t=T_{\mathrm{on}}$. From that time onward, the energy again decays at a rate comparable to that observed during the initial controlled phase. This indicates that the feedback $K_L^{[\lambda]}$ can restabilize the system from an elevated energy level. 

\begin{figure}[htbp]
    \centering
    \input{Figs/fig5.tex}
    \caption{Intermittent control under multiplicative noise
$(z=1,\xi\equiv0)$. Run~A: the actuators are switched off after
$T_{\mathrm{off}}$, and the energy starts to grow again. Run~B: the
actuators are reactivated at $T_{\mathrm{on}}$, after which the feedback restabilizes the system.}
  \label{fig:ex5-intermittent}
\end{figure}

\subsection{Reaction-diffusion with saturating nonlinearity: effect of actuator count}
\label{subsec:sim-nonlinear}

As a second example we consider a reaction-diffusion equation with a saturating nonlinear source, modelling autocatalytic growth with a bounded rate. The closed-loop system reads
\begin{subequations}\label{eq:ex2}
\begin{align}
    \mathrm d X
    + \Bigl(-\nu\Delta X - \gamma\,\arctan(X)\Bigr)\,\mathrm d t
    &= \sigma\,z\,X\,\mathrm d W_t
    + \sum_{i=1}^{N}[K_L^{[\lambda]}X]_i\,\1_{\clo_i}(x)\,\mathrm d t,
    \label{eq:ex2-pde}\\
    \partial_n X\rest{\partial\mathcal O} = 0,\qquad&
    X\rest{t=0} = X_0,
    \label{eq:ex2-bic}
\end{align}
\end{subequations}
with $\gamma, z\geq 0$.  There is no convection term; the only source of instability is the nonlinear reaction $f(u)=-\gamma\arctan(u)$, which contributes as a linear reaction with rate $\gamma$ at the origin. The noise is purely multiplicative with coefficient $z > 0$, placing this example in the almost-sure pathwise regime of Theorem~\ref{thm:main}~(ii).

\begin{table}[ht]
\centering
    \caption{Parameters for Example~\ref{subsec:sim-nonlinear}.}
    \label{tab:params-ex2}
    \begin{tabular}{ccccccccc}
    \toprule
    $\nu$ & $L_{x_1}, L_{x_2}$ & $\gamma$ & $z$ & $\sigma$ & $N$ & $\lambda$ & $T$ & $X_0(x)$ \\
    \midrule
    $0.1$ & $1$ & $5$ & $1.0$ & $5$ & $4$ or $9$ & $0.5$ & $3$ & $\sin(\pi x_1) \sin(\pi x_2)$ \\
    \bottomrule
\end{tabular}
\end{table}

The feedback is constructed with total support measure equal to $25\%$ of the whole domain and feedback strength $\lambda = 0.5$. We compare two actuator configurations, namely $N=4$ and $N=9$, as shown in Figure~\ref{fig.suppActR}. In all simulations, the noise amplitude is fixed at $\sigma=5$.

Figure~\ref{fig:ex2a} depicts the evolution of the energy $\|X(t)\|_H^2$ for all four runs. The uncontrolled trajectory grows exponentially, in agreement with the instability induced by the reaction rate $\gamma$. For $N=9$, the deterministic closed-loop solution decays exponentially down to machine precision. The corresponding stochastic closed-loop trajectory closely follows the deterministic one, illustrating pathwise decay at the prescribed rate despite the multiplicative noise with amplitude $\sigma=5$. By contrast, the stochastic run with $N=4$ is not stabilized: the energy decreases initially but then levels off, indicating that this smaller number of actuators is not sufficient to stabilize the system for the present value of $\gamma$.

Figure~\ref{figex2b} depicts snapshots of the $N=9$ stabilized state $X(x,t)$ at eight different time instances, arranged in a $2\times4$ grid with time increasing from left to right and from top to bottom. Panel~1, corresponding to $t=0$, shows the initial condition $X_0(x)=\sin(\pi x_1)\sin(\pi x_2)$, centered at $(0.5,0.5)$ with maximum value one. Panels~2--4 $(t=0.06,0.09,0.14)$ show the early transient, during which the effect of the feedback is most visible. The nine red circular regions mark the actuator supports $\clo_i$, arranged on a uniform $3\times3$ grid. The spatial imprint of these actuator regions is visible against the decaying state profile. Panels~5--7 $(t=0.21,0.31,0.47)$ show the late transient, where this imprint fades as the state amplitude continues to decrease. In Panel~8 $(t=0.71)$, the solution is essentially zero on the common color scale, consistently with the energy crossing the threshold $\varepsilon=10^{-5}$ at approximately $t=0.71$. All eight panels share a symmetric logarithmic color scale with crossover threshold $\varepsilon_{\rm c}=10^{-4}$. This avoids the singularity at zero, resolves the five-decade decay range, and preserves the sign of $X$. The colorbar ticks correspond to $X\in\{-1,-10^{-1},-10^{-2},0,10^{-2},10^{-1},1\}$.

\begin{figure}[htbp]
    \centering
    \input{Figs/fig6}
    \caption{Example~\ref{subsec:sim-nonlinear}. Energy evolution $\|X(t)\|_H^2$ for the uncontrolled, stochastic stabilized, and deterministic reference systems. For the chosen nonlinearity strength $\gamma$, stabilization is achieved with $N=9$ actuators but not with $N=4$.}
    \label{fig:ex2a}
\end{figure}

\begin{figure}[htbp]
    \centering
    \includegraphics[width=\textwidth]{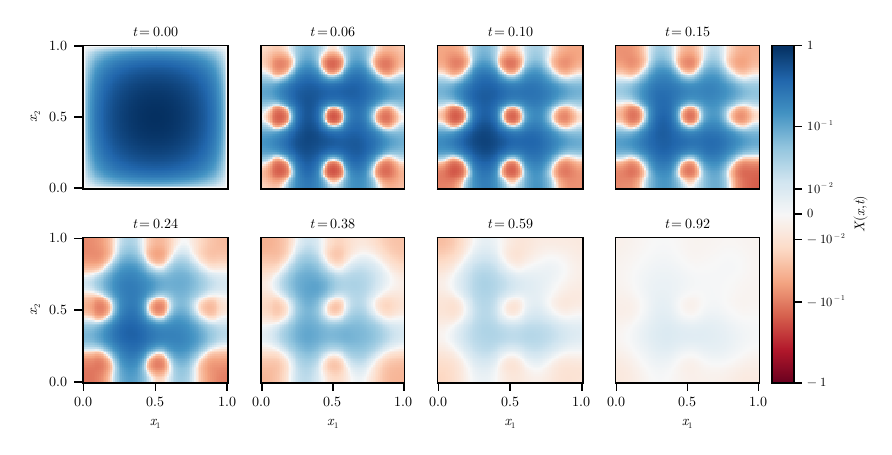}
    \caption{Example~\ref{subsec:sim-nonlinear}. Snapshots of the stabilized state for the $N=9$ actuator configuration, shown at eight time instances. Time increases from left to right and from top to bottom.}
    \label{figex2b}
\end{figure}

\backmatter

%
%

%

\section*{Conclusion}

We established a finite-dimensional feedback stabilization framework for nonautonomous stochastic semilinear parabolic-type equations driven by Wiener noise with trace-class covariance. The feedback is constructed from localized actuator functions, taken here as characteristic functions of small subdomains, and an oblique-projection mechanism. Under suitable conditions, the resulting closed-loop system is mean-square exponentially stable; in the purely multiplicative case, pathwise exponential stability is also obtained.

The analysis shows that, for a given actuator construction, the number of actuators has to be chosen in relation to the coefficients of the equation, including the diffusion parameter, lower-order terms, and nonlinear contributions. The numerical experiments support this observation: for the same actuator layout, the $N=4$ configuration is not sufficient in the tested parameter regime, whereas the $N=9$ configuration yields the predicted decay. The intermittent-control experiment further illustrates the effect of the feedback, since switching the control off leads to renewed growth of the energy, while reactivation restores decay.

The present work also suggests several directions for future research. These include more general nonlinearities, in particular locally Lipschitz continuous terms or L\'evy noise. Another natural direction is the extension of the feedback strategy to other classes of stochastic differential equations, such as stochastic damped wave equations. A systematic study of actuator placement and intermittent feedback mechanisms is left for future
work.

\section*{Declarations}

\noindent\textbf{Funding}. The work of J.H. was supported by the German Research Foundation (DFG) through the Collaborative Research Centre SFB~1432, ``Fluctuations and Nonlinearities in Classical and Quantum Matter beyond Equilibrium''.\\
The work of S.R. is funded by portuguese national funds through the FCT -- Funda\c{c}\~{a}o para a Ci\^{e}ncia e a Tecnologia, I.P., under the scope of the projects UID/00297/2025 (\url{https://doi.org/10.54499/UID/00297/2025}) and UID/PRR/00297/2025 (\url{https://doi.org/10.54499/UID/PRR/00297/2025}) (Center for Mathematics and Applications -- NOVA Math)

\noindent\textbf{Competing interests}. The authors declare that they have no competing interests.

\noindent\textbf{Ethics approval and consent to participate}. Not applicable.

\noindent\textbf{Consent for publication}. All authors have approved the manuscript and consent to its publication.

\noindent\textbf{Data availability}. The data generated and analysed during the numerical simulations of this study are available from the corresponding author upon reasonable request.

\noindent\textbf{Materials availability}. Not applicable.

\noindent\textbf{Code availability.} The code used for the numerical simulations, including the finite-element assembly, the feedback-stabilized backward Euler-Maruyama solver, and the scripts producing all figures, is archived on Zenodo under the MIT license at \url{https://doi.org/10.5281/zenodo.21220182},~\cite{vonderHeydtRodrigues2026code}.

\noindent\textbf{Author contribution}. All authors contributed equally to the theoretical analysis, the numerical methodology, the implementation and interpretation of the numerical experiments, and the writing of the manuscript. All authors read and approved the final manuscript.


\bibliography{SPDE}

\end{document}